\theoremstyle{plain}
\newtheorem{theorem}{Theorem}[section]
\newtheorem{corollary}[theorem]{Corollary}
\newtheorem{lemma}[theorem]{Lemma}
\newtheorem{prop}[theorem]{Proposition}
\theoremstyle{definition}
\theoremstyle{remark}
\newtheorem{remark}[theorem]{Remark}
\numberwithin{equation}{section}
\newcommand{\N}{\mathbb N} 
\newcommand{\R}{\mathbb R} 
\newcommand{\dist}{{\rm dist}}
\newcommand{\wto}{\rightharpoonup}
\newcommand{\E}{{\mathcal E}}
\newcommand{\W}{{\mathcal W}}
\renewcommand{\L}{{\mathcal L}}
\newcommand{\Hh}{{\mathcal H}}
\newcommand{\F}{{\mathcal F}}
\newcommand{\Ra} {\Rightarrow}
\newcommand{\sdist}{{\mathrm{sdist}}}
\renewcommand{\H}{{\mathcal H}}
\newcommand{\spt}{{\mathrm{spt}}}
\newcommand{\inv}{^{-1}}
\newcommand{\cc}{\Subset}
\newcommand{\embeds}{\xhookrightarrow{\quad}}
\newcommand{\eps}{\varepsilon}
\newcommand{\dx}{\mathrm{d}x}
\renewcommand{\d}{\,\mathrm{d}}
\definecolor{dkred}{rgb}{0.4, 0.0, 0.0}
\definecolor{dkblue}{rgb}{0.0, 0.0, 0.5}
 \newcommand{\stephan}[1]{\textcolor{black}{#1}}
 \newcommand{\mine}[1]{\textcolor{black}{#1}}
 \newcommand{\ton}[1]{\textcolor{black}{#1}}
\begin{document}

\title[Phase field models with topological constraint]{Phase field models for thin elastic structures with topological constraint}
\author{Patrick W.~Dondl}
\address{Patrick W.~Dondl\\Abteilung f\"ur Angewandte Mathematik\\Albert-Ludwigs-Universit\"at Freiburg\\Hermann-Herder-Str.~10\\ 79104 Freiburg i.~Br.\\Germany}
\email{patrick.dondl@mathematik.uni-freiburg.de}

\author{Antoine Lemenant}
\address{Antoine Lemenant\\Universit\'e Paris Diderot - Paris 7\\ 
U.F.R de Math\'ematiques\\
B{\^a}timent Sophie Germain\\
75205 Paris Cedex 13\\France}
\email{lemenant@ljll.univ-paris-diderot.fr}

\author{Stephan Wojtowytsch}
\address{Stephan Wojtowytsch\\Department of Mathematical Sciences\\Durham University\\Durham DH1\,1PT, United Kingdom}
\email{s.j.wojtowytsch@durham.ac.uk}

\date{\today}

\subjclass[2010]{49Q10; 74G65; 65M60}
\keywords{Willmore energy, phase field approximation, topological constraint, Gamma-convergence}

\begin{abstract}
This article is concerned with the problem of minimising the Willmore energy in the class of \emph{connected} surfaces with prescribed area which are confined to a \mine{small} container. We propose a phase field approximation based on De Giorgi's diffuse Willmore functional to this variational problem. Our main contribution is a penalisation term which ensures connectedness in the sharp interface limit. The \mine{penalisation of disconnectedness} is based on a geodesic distance chosen to be small between two points that lie on the same connected component of the transition layer of the phase field. 

\mine{We prove that in two dimensions, sequences of phase fields with uniformly bounded diffuse Willmore energy and diffuse area converge uniformly to the zeros of a double-well potential away from the support of a limiting measure. In three dimensions, we show that they converge $\H^1$-almost everywhere on curves.} This enables us to show $\Gamma$-convergence to a sharp interface problem that only allows for connected structures. \mine{The results also imply Hausdorff convergence of the level sets in two dimensions and a similar result in three dimensions.}

We furthermore present numerical evidence of the effectiveness of our model. The implementation relies on a coupling of Dijkstra's algorithm in order to compute the topological penalty to a finite element approach for the Willmore term.
\end{abstract}

\maketitle
\tableofcontents

\section{Introduction}

In this article, we consider a diffuse interface approximation of a variational problem arising in the study of thin elastic structures. Our particular question is motivated by the problem of predicting the shape of certain biological objects, such as mitochondria, which consist of an elastic lipid bilayer and are confined by an additional outer mitochondrial membrane of significantly smaller surface area.

In order to describe the the locally optimal shape of thin elastic structures, we will rely on suitable bending energies. A well-known example of such a variational characterisation of biomembranes is given by the Helfrich functional~\cite{Helfrich:1973td, Canham:1970jp}

\begin{equation}
\E^\mathrm{H}(\Sigma) = \mine{ \int_\Sigma \chi_H\, (H-H_0)^2 + \chi_K\, K \d \H^2} \label{eq:Will_Hel}
\end{equation}
where $\Sigma$ denotes the two-dimensional membrane surface in $\R^3$ and $H$ and $K$ denote its mean and Gaussian curvatures. The parameters \mine{$\chi_H$, $\chi_K$} and $H_0$ are the bending moduli and the spontaneous curvature of the membrane \mine{and are, in the simplest case, constants}. \mine{The integral is} performed with respect to the two-dimensional Hausdorff measure $\Hh^2$. The special case when $H_0, \chi_{\mine{K}} = 0$ \mine{and $\chi_H=1$} is known as Willmore's energy\label{now labelled equation}
\begin{equation}
\W(\Sigma)= \int_\Sigma H^2\d \H^2.
\end{equation}
\mine{
While our results extend to certain Helfrich-type functionals of more general form (see section \ref{section conclusion}), the basic task we have set for ourselves is the following: {\it Minimise Willmore's energy in the class of connected $C^2$-surfaces which are embedded into a domain $\Omega$ such that the embedding has prescribed surface area $S>0$.} This problem does not necessarily admit a solution, but a minimiser exists in the varifold closure of these surfaces \cite{Dondl:2014vn}.
}

\mine{
In this version of the problem, the outer mitochondrial membrane is the rigid boundary $\partial\Omega$, but the results also apply if an elastic container is used, see section \ref{section conclusion}.
}

\mine{
We propose a phase field approach to this problem. For computations, this has some advantages to the naive approach of taking a gradient flow of Willmore's energy (or an area preserving version of it).
}

\mine{
First, Willmore flow is a fully non-linear degenerate parabolic PDE of fourth order on a moving surface, which makes both analytic and numerical treatment difficult. In contrast, our phase-field flow equations are fourth order quasi-linear parabolic with constant coefficients for the fourth derivatives. The price we have to pay for this relative luxury is, as usual, that we have to solve equations in one higher dimension.
}

\mine{
Second, Willmore flow is a fourth order PDE and as such does not admit a maximum principle. In particular, a surface originally confined to even a convex container could easily leave it along the gradient flow, whereas for us the confinement to $\Omega$ is simply given by the choice of boundary condition and the domain of the phase field.
}

\mine{
The main challenge for phase fields is to control the topology of a limiting surface (in some sense) from quantities of diffuse interfaces. In this paper, we give a suitable concept of connectedness that admits a rigorous variational statement as $\eps\to 0$ and can be implemented efficiently in a simulation. 
}

\mine{
Heuristically, we introduce a {\emph quantitative} notion of {\emph path-connectedness} for the interfacial region of a phase field. The disconnectedness for two points $x,y$ in the interface layer is measured by a geodesic distance (the infimum length of connecting paths) where walking in the interfacial region does not add any length, but leaving the interface adds a positive amount. The total disconnectedness is then computed as a double integral over the distances of interface points.
}

\mine{
This functional poses analytic challenges since it is not a priori clear how 'two-dimensional' the interfacial layer of a phase field is. Since the distance function uses the length of curves, away from a limiting surface we need the phase-fields to converge almost everywhere on curves, i.e.\ on objects of codimension $2$. Also, the structure of the interface has to be understood in a precise way in order to estimate the aforementioned double integral. More rigorous statements are given below. 
}

\mine{Our analysis builds on ideas in \cite{Roger:2006ta}.}
Along the proof of our Theorem, we obtain a number of useful technical results about the convergence of phase field approximations to the limit problem which appear to be new in this context. Namely, for sequences along of uniformly bounded \mine{diffuse energy}, \mine{in two dimensions} we prove convergence of the transition layers to the \mine{support of a limiting measure} in the Hausdorff distance and uniform convergence of the phase field to $\pm 1$ away from the limit curve. This is reminiscent of a result for minimisers of the Modica-Mortola functional among functions with prescribed integral \cite{Caffarelli:1995kh} and more generally \mine{for} stationary states \cite{Hutchinson:2000df}.

In three dimensions, we show that neither result is true, but prove \mine{$L^1$-convergence on curves and that $\spt(\mu)$ is contained in the Hausdorff limit of interfacial regions. This enables us to precisely control our topological energy term}. We also show that phase fields are uniformly $L^\infty$-bounded in terms of their Willmore energy, also in three dimensions.

The numerical implementation of our functional will be discussed further in a forth-coming article \cite{DW_numerical}. We use a variant of Dijkstra's algorithm similar to the one of \cite{benmansour:2010dm} to compute the geodesic distance function used in the topological term of our energy functional. The weight in the geodesic distance is chosen to be exactly zero along connected components of the transition layer, which implies that it only needs to be computed once per detected connected component. This makes the functional efficient from an implementation point of view.

The \mine{article} is organised as follows. \mine{In the remainder of this section we give some further context for our problem.} In section \ref{section review} we give a brief introduction to phase fields for Willmore's problem in general (sections \ref{section 2 background} {and \ref{section 2 discrepancy}}) and to our approach to connectedness for the transition layers (section \ref{section 2 connectedness}). Our main results are subsequently listed in section \ref{section 2 main}. 

Section \ref{section proofs} is entirely dedicated to the proofs of our main results. We directly proceed to show $\Gamma$-convergence of our functionals (section \ref{section 3 Gamma}). In sections \ref{section 3 auxiliary 1} and \ref{section 3 auxiliary 2}, we produce all the auxiliary estimates that will be needed in the later proofs. These are then used to show convergence of the phase fields, Hausdorff-convergence of the transition layers, and connectedness of the support of the limit measure (section \ref{section 3 phase field}). 

In section \ref{section computer} we show numerical evidence of the effectiveness and efficiency of our approach. To that end, we compare it to diffuse Willmore flow \mine{in two dimensions} without a topological term and to the penalisation proposed in \cite{Dondl:2011eh}. We have been unable to implement the functional developed in \cite{Dondl:2014vn} in practice, so a comparison with that could not be drawn. In section \ref{section conclusion} we discuss a few easy extensions of our main results.

\subsection{Topology and Phase Fields}

An often cited advantage of phase fields is that they are capable of changing their topology; in that sense our endeavour is non-standard. It should be noted that our phase fields may still change their topology (at least in three dimensions), only connectedness is enforced.

\mine{
Examples of topological changes and loss of connectedness in simulations for biological problems governed by bending energies or our type are given in \cite{Du:2007tt, Du:2010tt}.
}

\mine{
In \cite{Bonnivard:2014tw}, a geodesic distance function has been used to minimise the length of a connected set $K$ containing a prescribed set of points $x_1, \dots, x_N$ in two dimensions (Steiner's problem). Our setting is different in two ways: 1. Steiner's problem has a finite number of a priori known points which need to be contained in $K$ while the transition layer of the phase field has no special points and 2. the phase field approximation of Steiner's problem works in dimension $n=2$, while we work in ambient space of dimension $n=2,3$ where the curves used in the definition of the distance function have codimension $2$.
}

Previous work in~\cite{Dondl:2011eh} provides a first attempt at an implementation of a topological constraint in a phase field model for elastic strings modelled by the one-dimensional version of the Willmore energy, Euler's elastica. \mine{This technique prevents transitions in simulations for simple situations, but may fail in more complex cases, see section \ref{section computer}. Similar numerical approaches to controlling the topology using a diffuse Euler number are discussed in \cite{du:2005wm, du:2007ux}. }

This approach was complemented by a method put forth in \cite{Dondl:2014vn}, which relies on a second phase field subject to an auxiliary minimisation problem used to identify connected components of the transition layer. \mine{While our functional can be seen as using a diffuse measure of path-connectedness, the functional in \cite{Dondl:2014vn} generalises more directly the notion of connectedness.} For this model, a $\Gamma$-convergence result was obtained, showing that limits of bounded-energy sequences must describe a connected structure. Unfortunately, the complicated nested minimisation problem makes it unsuitable for computation.

Approaches of regularising limit interfaces have been developed by Bellettini in \cite{MR1459883} and investigated analytically and numerically in \cite{Esedoglu:2014vd}. The approaches work by introducing non-linear terms of the phase field in order to control the Willmore energies of the level sets individually and exclude transversal crossings (which phase fields for De Giorgi's functional can develop). These regularisations may prevent loss of connectedness along a gradient flow in practice, but do not lead to a variational statement via $\Gamma$-convergence.

\mine{
Furthermore, we would like to emphasise that we can easily describe a weakly* continuous evolution of varifolds along which connectedness is lost. Except at one singular time, the varifolds are embedded $C^2$-manifolds and the evolution is $C^2$-smooth. Details can be found in a forthcoming paper \cite{DW_sharp}.
}

\mine{
It thus is not clear whether the approach of \cite{MR1459883} does prevent topological transitions, in particular, the loss of connectedness, in three ambient space dimensions. At least, it is more difficult to implement due to the highly non-linear term including the Willmore energies of level sets.}

\mine{
Also in \cite{DW_sharp}, we show that topological genus is not continuous under varifold convergence and that minimising sequences of a constrained minimisation problem with fixed genus may change topological type in the limit.
}

\mine{
At this point, we thus know of no other model which can control the topology of phase field limits. Furthermore, our results are optimal since they allow us to control as much of the topology as can be controlled even for a sharp interface and they allow for efficient implementation.
}

\subsection{Biological Membranes}

\mine{
Willmore's and Helfrich's energies are widely used in the modelling of thin elastic structures. The first mention in that context goes back to Sophie Germain \cite{germain1821recherches}. Later, their importance has been suggested heuristically \cite{Helfrich:1973td} based on the principle that when a membrane is written as a graph over its tangent space, only derivatives of at most second degree should occur in at most quadratic expressions at the base point. Another biological motivation in the context of red blood cells is given in \cite{Canham:1970jp}.
}

\mine{
A Helfrich type functional has also been obtained as a macroscopic limit of certain mesoscale models for lipid bilayers \cite{peletier2009partial, lussardi2014variational}. Here bilayers are modelled using functions $u, v\in BV(\R^n, \{0,1\})$ to express locations of hydrophilic heads and hydrophobic tails of lipid molecules. These functions are coupled through a Monge-Kantorovich distance to be close together, constrained to satisfy $uv\equiv 0$ and the perimeter of $\{u=1\}$ is penalised. This energy prefers a bilayer structure and suitably rescaled versions $\Gamma$-converge to a Helfrich functional with $H_0 = 0$, $\chi_H = 1/2$ and $\chi_K = 1/3$.
}

\mine{
A heuristic way to motivate the occurrence of Willmore's energy in this context comes from thin shell theory. In \cite{friesecke2002theorem, friesecke2002rigorous} Friesecke, James and M\"uller proved $\Gamma$-convergence of non-linear three-dimensional elasticity to geometric bending energies including those of Willmore- or Helfrich-type in the vanishing thickness limit of thin plates. The admissible class here are isometric embeddings of domains in $\R^2$. 
The restriction to isometries stems from the fact that in-plane stretching energy scales with thickness of the plate and dominates out of-plane bending, which scales with the third power of the thickness parameter. Thus in the (second order) vanishing thickness limit, we are led to minimise Willmore's energy in a class of isometric immersions. The case of shells (i.e.\ non-flat structures) has been treated in \cite{friesecke2003derivation}.
}

\mine{
Lipid bilayers differ from shells in that they are liquid not solid and as such do not have a reference configuration. Assuming inextensibility, we must therefore also minimise over the space of Riemannian metrics on the bilayer with fixed area and the isometry constraint turns into the fixed area constraint.
}

\subsection{Further Context}

\mine{
Without any claim of completeness, let us give a bit more context of our topic. Willmore's energy is named for T.J.\ Willmore who studied it in a series of publications \cite{willmore1965note, willmore1971mean, willmore1992survey, willmore2000surfaces} and popularised it in his textbook \cite{willmore1996riemannian}. Independently of Willmore's work, the energy had already been considered as a bending energy for thins plates in \cite{germain1821recherches} and as a conformal invariant of surfaces embedded in $\R^3$ in \cite{MR0015247, thomsen}.
}

\mine{
As mentioned above, Willmore's energy is famously conformally invariant, in particular scale invariant. This shows that both the embedding constraint into $\Omega$ and the area constraint $\H^2 = S$ are needed to give a non-trivial constraint.
}

Extrema (in particular, local minimisers) of the Willmore functional are of interest in models for biological membranes, but they also arise naturally in pure differential geometry as the stereographic projections of compact minimal surfaces in $S^3$\mine{, see e.g.} \cite{pinkall1987willmore}, \mine{also for} examples. 

Even for Willmore's energy rigorous results are hard to obtain. From the point of view of the calculus of variations, a natural approach to energies as the ones above is via varifolds~\cite{Allard:1972vh}, \mine{\cite{MR825628}, where existence of minimisers for certain curvature functionals is proved}. The existence of smooth minimising Tori was proved by Simon \cite{Simon:1993uu}, and later generalised to surfaces of arbitrary genus in \cite{Bauer:2003er}. 

The long-standing Willmore conjecture that $\W(T) \geq 4\pi^2$ for all Tori embedded in $\R^3$ was recently established in \cite{marquesmin}, and the large limit genus of the minimal Willmore energy for closed orientable surfaces in $\R^3$ has been investigated in \cite{kuwert2010large}. The existence of smooth minimising surfaces under isoperimetric constraints has been established in \cite{schygulla2012willmore}. A good account of the Willmore functional in this context can be found in \cite{kuwert2012willmore}. 

The case of surfaces constrained to the unit ball was studied in \cite{Muller:2013vz} and a scaling law for the Willmore energy was found in the regimes of surface area just exceeding $4\pi$ and the large area limit. While the above papers adopt an external approach in the language of varifold geometry, a parametrised approach has been developed in \cite{riviere2014variational} and related papers. \mine{In \cite{MR3176354} this framework is used to solve the Willmore minimisation problem with prescribed genus and prescribed isoperimetric type. \cite{deckelnick2015minimising} gives a study of the Willmore functional on $C^2$-graphs and its $L^1$-lower semi-continuous envelope.}

Other avenues of research consider Willmore surfaces in more general ambient spaces \mine{\cite{MR2785762, MR2725514, MR2989995}}.

In the class of closed surfaces, \mine{if $\chi_K$ is constant,} the second term in the Helfrich functional is of topological nature due to the Gauss-Bonnet theorem. \text{So, i}f the minimisation problem is considered only among surfaces of prescribed topological type it can be neglected. The spontaneous curvature is realistically expected to be non-zero and can have tremendous influence. It should be noted that the full Helfrich energy depends also on the orientation of a surfaces for $H_0\neq 0$ and not only on its induced (unoriented) varifold. Gro\ss e-Brauckmann \cite{grosse1993new} gives an example of \mine{(obviously non-compact)} surfaces $M_k$ of constant mean curvature $H\equiv 1$ converging to a doubly covered plane. This demonstrates that, unlike the Willmore energy, the Helfrich energy need not be lower semi-continuous under varifold convergence for \mine{certain} parameters.

Recently, existence of minimisers for certain Helfrich-type energies among axially symmetric surfaces under an isoperimetric constraint was proved by Choksi \mine{and Veneroni}~\cite{Choksi:2013jw}. \mine{Lower semi-continuity for the Helfrich functional on $C^2$-boundaries with respect to the $L^1$-topology of the enclosed sets was established by means of Gauss graphs in \cite{delladio1997special}.}

Results for the gradient flow of the Willmore functional are still few. Short time existence for sufficiently smooth initial data \mine{has been shown in \cite{simonett2001willmore, kuwert2002gradient} (see also \cite{mayer2003self})} and long time existence for small initial energy \mine{and convergence to a round sphere has} been demonstrated in \cite{Kuwert:2001uh, kuwert2012willmore}\mine{. Kuwert and Sch\"atzle's lower bound on existence times in terms of initial curvature concentration in space has been generalised to Willmore flow in Riemannian manifolds of bounded geometry in \cite{link2013gradient}. I}t has been shown that Willmore flow can drive smooth initial surfaces to self-intersections in finite time in \cite{mayer2003self}. \mine{This issue seems to be prevented by our connectedness functional on the phase field level, although we do not have a rigorous statement on this.} Numerical simulations suggest that singularities can occur in  Willmore flow in finite time \cite{mayer2002numerical}. \mine{\cite[Figure 2]{Du:2007tt} gives a numerical example of a disc pinching off to a torus.} A level set approach to Willmore flow is discussed in \cite{droske2004level}. 

Studies of numerical implementations of Willmore flow are, for example, due to Garcke, Dziuk, Elliott \textit{et al.}~in~\cite{Barrett:2008gd,Dziuk:2008ib,Dziuk:2007gt}. Particularly interesting here is also an implementation of a two-step time-discretisation algorithm due to Rumpf and Balzani~\cite{Balzani:2012iq}.
 
Phase field approximations of the functional in~\eqref{eq:Will_Hel}, on the other hand, often provide a more convenient approach to gradient flows or minimisation of the Willmore or Helfrich functionals. 
In particular if a coupling of the surface to a bulk term is desired, phase field models can provide an excellent alternative to a parametric discretisation~\cite{Du:2010tt}.

The idea for applying \mine{a phase field} approach to Willmore's energy goes back to De Giorgi~\cite{DeGiorgi:1991jc}. For a slight modification of De Giorgi's functional, reading
\[
\E_\eps^\mathrm{dG}(u) = \frac{1}{c_0}\int_\Omega \frac{1}{\eps}\left( \eps\, \Delta u - \frac{1}{\eps}\,W'(u) \right)^2 \dx \:+ \mine{\frac{\lambda}{c_0}\, \int_\Omega \frac\eps2\,|\nabla u|^2 + \frac1\eps\,W(u)\dx} \:\:=: \mathcal{W}_\eps(u) + \lambda\, S_\eps(u)
\]
$\Gamma$-convergence to the sum of Willmore's energy and the $\lambda$-fold perimeter functional \stephan{($\lambda>0$)} was finally proved by R\"oger and Sch\"atzle~\cite{Roger:2006ta} in $n=2,3$ dimensions, providing the lower energy bound \stephan{in the varifold context}, after Bellettini and Paolini had provided the recovery sequence over a decade earlier ~\cite{Bellettini:1993vg}. First analytic evidence had previously been presented in the form of asymptotic expansions in \cite{Du:2005fx}. This and other phase field approximations of Willmore's energy and their $L^2$-gradient flows are reviewed in \cite{bretin2013phase}. A convergence result for a diffuse approximation of certain more general Helfrich-type functionals has also been derived by Belletini and Mugnai~\cite{Bellettini:2009ui}.

Regarding implementation of phase field models for Willmore's and Helfrich's energy, we refer to the work by Misbah \textit{et al.}~in~\cite{Biben:2005ds} and Du \textit{et al.}~\cite{Du:2005fx,du:2005wm,Du:2006hl,Du:2007tt,du:2007ux,Du:2009vl,Du:2010tt,Wang:2008ux}. In~\cite{Franken:2013tt}, the two-step algorithm for surface evolution has been extended to phase fields. A numerical implementation of the Helfrich functional can be found, for example, in the work by Campelo and Hernandez-Machado~\cite{Campelo:2006gf}.

\section{Phase Fields}\label{section review}

\subsection{General Background}\label{section 2 background}
A phase field approach is a method which lifts problems of $(n-1)$-dimensional manifolds which are the boundaries of sets to problems of scalar fields on $n$-dimensional space. Namely, instead of looking at $\partial E$, we study smooth approximations of $\chi_E-\chi_{E^c}$. \mine{Such models are generally based on a competition between a multi-well functional penalising deviation of the phase field function from the minima (usually $\pm 1$ instead of the characteristic function values $0$, $1$) and a gradient-penalising term preventing an overly sharp transition.} Fix $\Omega\cc\R^n$, open. Our model space is
\[
X = \{u\in W^{2,2}_{loc}(\R^n)\:|\:u\equiv -1 \text{ outside }\Omega\} \mine{= -1 + W^{2,2}_0(\Omega)}.
\]
If $\partial\Omega\in C^2$, this can be identified with  $X_a = \{u\in W^{2,2}(\Omega)\:|\: u = \partial_\nu u = 0 \text{ on }\partial\Omega\}$, otherwise the first formulation is technically easier to work with. The boundary conditions are chosen to model sets which are totally contained in $\Omega$ and whose boundaries may only touch $\partial\Omega$ tangentially. Then (as well as in more general settings) it is known \cite{Modica:1987us} that the \mine{Modica-Mortola or Cahn-Hilliard} functionals 
\[
S_\eps:L^1(\Omega)\to \R,\quad S_\eps(u) = \begin{cases}\frac1{c_0}\int_\Omega \frac\eps2\,|\nabla u|^2 + \frac1\eps\,W(u)\d x &u\in X\\ +\infty &\text{else,}\end{cases}
\]
with $W(u) = (u^2-1)^2/4$ and $c_0 = \int_{-1}^1 \sqrt{2\,W(u)\,}\d u = 2\sqrt{2\,}/3$, $\Gamma$-converge to
\[
S_0(u) = \frac12 |Du|(\overline\Omega)
\] 
with respect to strong $L^1$-convergence for functions $u\in BV(\R^n,\{-1,1\})$ (and $+\infty$ else). The limit $S_0(u)$ agrees with the perimeter functional $\mathrm{Per}(\{u=1\})$. The double-well potential $W$ forces sequences of bounded energy $S_\eps(u_\eps)$ to converge to a function of the above form as $\eps\to 0$\mine{. By Young's inequality $\frac{\eps}{4}a^2+\eps b^2\geq  ab$ we can control the $BV$-norms of $G(u_\eps)= \int_{-1}^{u_\eps} \sqrt{2\,W(s)\,}\d s$ \stephan{through
\begin{align*}
||\,G(u_\eps)||_1 &\leq C\,\left(\int_\Omega W(u_\eps)+1\right)^\frac34,\\
 \int_\Omega |\nabla G(u_\eps)|\dx& = \int_\Omega |G'(u_\eps)|\,|\nabla u_\eps|\dx\\
 	&\leq \int_\Omega \frac1{4\eps} \,|G'(u_\eps)|^2 + \eps\,|\nabla u_\eps|^2\dx\\
	&= 2\,S_\eps(u_\eps). 
\end{align*}
So $G(u_\eps)\to v$ converges strongly in $L^p(\Omega)$ for all $p<n/(n-1)$ for some $v\in BV(\Omega)$ by the compactness theorem for $BV$-functions (up to a subsequence), so in particular pointwise almost everywhere (further subsequence). $||u_\eps||_{L^4(\Omega)}^4 \leq C\,\big(1+ \frac1\eps\int_\Omega W(u_\eps)\dx\big)$, so $u_\eps\to u$ weakly in $L^4(\Omega)$ and $u_\eps \to G^{-1}(v)$ pointwise a.e.\ which together implies that  $u_\eps \to u = G\inv(v)$ strongly in $L^p(\Omega)$ for all $p<4$. T}he functional has originally been studied in the context of minimal surfaces, see e.g.\ \cite{Modica:1987us, Caffarelli:1995kh, Caffarelli:2006cp}. We will view them in a slightly different light. If $u_\eps$ is a finite energy sequence, we denote the Radon measure given by the approximations of the area functional as
\[
\mu_\eps(U) = \frac{1}{c_0}\int_U \frac\eps2\,|\nabla u_\eps|^2 + \frac1\eps\,W(u_\eps)\d x.
\]
Taking the first variation of this measure and integrating by parts, we get
\[
\delta\mu_\eps(\phi) = \frac1{c_0} \int_\Omega\left(-\eps\,\Delta u_\eps + \frac1\eps\,W'(u_\eps)\right)\,\phi\d x.
\]
\stephan{The integrand}
\[
v _\eps := -\eps\,\Delta u_\eps + \frac1\eps\,W'(u_\eps)
\]
\stephan{is} a diffuse analogue of the mean curvature as the variation of surface area. A modified version of a conjecture by De Giorgi then proposes that squaring $v_\eps$, integrating over $\Omega$ and dividing by $\eps$ should be a $\Gamma$-convergent approximation of $\W(\partial E)$ for $C^2$-regular sets $E$, again with respect to strong $L^1$-convergence. Dividing by $\eps$ is needed to convert the volume integral into a diffuse surface integral over the transition layer with width of order $\eps$. \stephan{This is true at least in $n = 2,3$ dimensions \cite{Roger:2006ta}.} From now on, we will restrict ourselves to these dimensions. We thus introduce the diffuse Willmore functional
\[
\W_\eps(u) = \begin{cases}\frac{1}{c_0}\int_\Omega\frac1\eps \left(\eps\,\Delta u - \frac1\eps\,W'(u)\right)^2\d x&u\in X\\ +\infty &\text{else}\end{cases}
\]
and the Radon measures associated with a sequence $u_\eps$ with bounded energy $S_\eps(u_\eps) + \W_\eps(u_\eps)$
\[
\mu_\eps(U) = \frac1{c_0}\int_U\frac\eps2\,|\nabla u_\eps|^2 + \frac1\eps\,W(u_\eps)\d x,\quad \alpha_\eps(U) = \frac1{c_0}\,\int_U\frac1\eps\, v_\eps^2\d x.
\]
Since the measures $\mu_\eps, \alpha_\eps$ are uniformly bounded, we can invoke
}
 the compactness theorem for Radon measures to see that there are finite Radon measures $\mu,\alpha$ with support in $\overline{\Omega}$ such that, up to a subsequence,
\[
u_\eps\to u\text{ strongly in }L^1(\Omega),\quad \mu_\eps\stackrel*\wto \mu,\quad \alpha_\eps\stackrel *\wto \alpha.
\]
\stephan{A finer comparison shows that $|D u|\leq 2\mu$.} Clearly by construction $u = \chi_E - \chi_{E^c}$ for some $E\subset \Omega$, and from the above argument is not difficult to see that $E$ is a Caccioppoli set. In three dimensions, however, $E$ may well be empty\stephan{. To see this, one can use the construction of \cite{Muller:2013vz}, where two concentric spheres of very similar radius are connected by a modified catenoid. As the catenoid is a minimal surface, this keeps the Willmore energy bounded. Taking the radii to converge to the same quantity and approximating them with the recovery sequence of a phase field, we see that $E$ is the $L^1$-limit of the enclosed set between the two original sphere, so empty.} 

In \cite{Roger:2006ta} it has been shown that $\mu$ is in fact the mass measure of an integral $n-1$-varifold with $|\partial^*E|\leq \mu$ and $ H^2_\mu\cdot \mu \leq\alpha$ for the generalised mean curvature $H_\mu$ of $\mu$. Unlike $\mu$, $\alpha$ can still behave wildly.

We aim to minimise the Willmore energy among connected surfaces with given area $S$. To prescribe area $\mu(\overline\Omega) = S$, we include a penalisation term of  
\[
\frac1{\eps^\sigma}\,(S_\eps - S)^2
\]
in the energy functional. \mine{So we have a good approximation of Willmore's energy available, the area constraint can be approximated through an easy penalisation, and confinement to $\Omega$ comes automatically from the choice of function space $X$. The main challenge left is controlling the connectedness of $\spt(\mu)$ which we interpret as a sharp interface membrane.
}

\subsection{Equipartition of Energy}\label{section 2 discrepancy}

\stephan{Intuitively, t}he Cahn-Hilliard energy \stephan{prefers} phase fields to be close to $\pm 1$ in phase and make transitions between the phases along layers of width proportional to $\eps$. \stephan{In particular, to attain the minimum, equality $|\nabla u_\eps| = |G'(u_\eps)|$ is needed in Young's inequality on most of $\Omega$. So it makes sense that the terms $\frac\eps2\,|\nabla u_\eps|^2$ and $\frac1\eps\,W(u_\eps)$ should make an equal contribution to the measure $\mu_\eps$.} The difference of their contributions is controlled by the discrepancy measures
\[
\xi_\eps(U) = \frac1{c_0}\int_U \frac\eps2\,|\nabla u_\eps|^2 - \,\frac1\eps\,W(u_\eps)\d x,
\]
their positive parts $\xi_{\eps,+}(U) = \frac1{c_0}\int_U \left(\frac\eps2\,|\nabla u_\eps|^2 - \frac1\eps\,W(u_\eps)\right)_+\d x$ and their total variation measures $|\xi_\eps|(U) = \frac1{c_0}\int_U \left|\frac\eps2\,|\nabla u_\eps|^2 - \frac1\eps\,W(u_\eps)\right|\d x$. We will see that for a suitable recovery sequence, $|\xi_\eps|$ vanishes exponentially in $\eps$; more generally the discrepancy goes to zero in general sequences $u_\eps$ along which $\alpha_\eps+\mu_\eps$ stays bounded as shown in \cite[Propositions 4.4, 4.9]{Roger:2006ta}. The result has been improved along subsequences in \cite[Theorem 4.6]{Bellettini:2009ui} to $L^p$-convergence for $p<3/2$ and convergence of Radon measures for the gradients of the discrepancy densities \mine{for an approximation of Helfrich's energy}.

The control of these measures is extremely important since they appear in the derivative of the diffuse $(n-1)$-densities $r^{1-n}\mu_\eps(B_r(x))$ \stephan{ with respect to the radius $r$}.

\subsection{Connectedness}\label{section 2 connectedness}

In order to ensure that the support of the limiting measure $\mu$ is connected we include an auxiliary term $C_\eps$ in the energy functional. \mine{The heuristic idea behind this is that if the support of the limiting measure $\spt(\mu)$ is connected, then so should the set $\{\rho_1<u_\eps<\rho_2\}$ for $-1<\rho_1<\rho_2<1$. These level sets away from $\pm 1$ can be heuristically viewed as approximations of $\spt(\mu)$, and in other situations, they can be seen to Hausdorff-converge to it \cite{Caffarelli:1995kh}. This is not quite true in our situation (in three dimensions, see Remark \ref{remark no uniform}), but the intuition still holds.}

\mine{Our concept is to introduce a quantitative notion of path-connectedness and penalise the measured disconnectedness. Take a weight function $F\in C^{0}(\R)$ such that
\[
F\geq 0,\qquad F\equiv 0\text{ on }[\rho_1,\rho_2], \qquad F(-1),\: F(1)>0 .
\]
and the associated geodesic distance
\[
d^{F(u)}(x,y) = \inf\left\{\int_K F(u)\d \H^1\:\bigg|\:K\text{ connected, }x,y\in K, \H^1(K)\le \omega(\eps)\right\}, \quad \textrm{$\omega(\eps) \to \infty$ as $\eps \to 0$}.
\]
\mine{In particular, $\omega(\eps)\equiv\infty$ is not excluded.} If $\{\rho_1<u_\eps<\rho_2\}$ is connected, we can connect any two points $x, y\in u_\eps\inv(\rho_1,\rho_2)$ by a curve of length zero. If it is not, then $d^{F(u)}(x,y)$ gives a quantitative notion of how badly path-connectedness fails between these two points. To obtain a global notion, we take a second weight function $\phi \in C_c(-1,1)$ resembling a bump, i.e.,
\[
\phi\geq 0,\qquad \{\phi>0\} = (\rho_1,\rho_2)\cc (-1,1),\qquad \int_{-1}^1\phi(u)\d u >0
\]
and take the double integral
\[
C_\eps(u) = \frac1{\eps^2}\int_\Omega\int_\Omega \phi(u(x))\,\phi(u(y)) \,d ^{F(u)}(x,y)\d x\d y.
\]
So, i}f $\{\phi(u_\eps)>0\} = \{\rho_1<u_\eps<\rho_2\}$ is connected, we can connect any two points $x,y\in \Omega$ such that $\phi(u_\eps(x))\,\phi(u_\eps(y))\,>0$ with a curve of length zero, hence $d^{F(u_\eps)}(x,y)=0$ and both the integrand and the double integral vanish.

If on the other hand $\spt(\mu)$ is disconnected, then we expect that $d^{F(u_\eps)}$ should be able to \stephan{discern} different connected components such that $\liminf_{\eps\to 0}C_\eps(u_\eps)>0$. The core part of our proof is concerned with precisely that. We need to show that $\phi$ detects components of the interface and that $d^{F(u_\eps)}$ \stephan{distinguishes} them. For the first result, we need to understand the structure of the interfaces converging to $\mu$ \stephan{and make sure they cannot be so steep in $u_\eps\inv(\rho_1,\rho_2)$ that the double integral does not see them in the limit.}

\mine{
For the second part, the challenge is to understand how phase fields converge away from the interface. It is clear that they converge pointwise almost everywhere, but it is not a priori clear whether they can stay away from $\pm 1$ along a curve in three dimensions. Namely, if the support of a limiting measure $\mu$ of $\mu_\eps$ had two components $C_1, C_2$, but the approximating sequence satisfied $u_\eps \in (\rho_1,\rho_2)$ in a neighbourhood of a curve $\gamma_\eps$ connecting $C_1$ and $C_2$, then $d^{F(u_\eps)}$ could not distinguish between the components.}

\stephan{
In two ambient dimensions, this is excluded relatively easily. In three-dimensional space however, the curve $\gamma_\eps$ would not be seen by the limiting measure $\mu$, which is an integral $2$-varifold. Thus we need to show that in analogy to the sharp interface, a long and narrow tunnel-like strucure around $\gamma_\eps$ blows up in diffuse Willmore energy. We like to think of this as a convergence localised on curves, i.e.\ sets of co-dimension $n-1 = 2$.
}

\subsection{Main Results}\label{section 2 main}

We have the following results for phase fields with bound\-ed Willmore energy and perimeter. In the following we assume that $u_\eps$ denotes the continuous representative.

\begin{theorem}\label{theorem n=2}
Let $n=2$ and $u_\eps\in X$ a sequence such that 
\[
\limsup_{\eps\to 0}\:(\W_\eps + S_\eps)\: (u_\eps) <\infty.
\]
Denote $\mu = \lim_{\eps\to 0}\mu_\eps$ for a subsequence along which \stephan{$\mu_\eps$ converge weakly as Radon measures to a limit} and take any $\Omega'\cc \R^n\setminus\spt(\mu)$. Then $|u_\eps|\to 1$ uniformly on $\Omega'$. 
\end{theorem}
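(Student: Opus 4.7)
I would argue by contradiction via a blow-up at the natural phase-field scale combined with the diffuse monotonicity formula. Suppose the conclusion fails: there is $\delta>0$ and points $x_\eps \in \Omega'$ with $|u_\eps(x_\eps)| \leq 1-\delta$; after passing to a subsequence, $x_\eps \to x_0 \in \overline{\Omega'} \subset \R^n \setminus \spt(\mu)$. Choose $r_0>0$ with $B_{2r_0}(x_0)\cap \spt(\mu)=\emptyset$; weak convergence of Radon measures then gives $\mu_\eps(B_{r_0}(x_\eps)) \to 0$.

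The blow-up is $\tilde u_\eps(y) := u_\eps(x_\eps + \eps y)$ on $B_{r_0/\eps}(0)$. A change of variables turns the Willmore bound into
\[
\int_{B_R} \bigl(\Delta \tilde u_\eps - W'(\tilde u_\eps)\bigr)^2\,\d y \leq C\,\eps
\]
for every fixed $R>0$, so $\Delta \tilde u_\eps = W'(\tilde u_\eps) + w_\eps$ with $w_\eps \to 0$ in $L^2_{\mathrm{loc}}$. Using the uniform $L^\infty$-bound on $u_\eps$ (noted in the discussion above), interior elliptic $L^2$-estimates give $\tilde u_\eps$ uniformly bounded in $W^{2,2}_{\mathrm{loc}}(\R^n)$. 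In $n=2$, $W^{2,2}\embeds C^{0,\alpha}$ together with Rellich yield a further subsequence with $\tilde u_\eps \to \tilde u$ in $C^{0,\alpha}_{\mathrm{loc}}$, where $\tilde u$ is a bounded classical solution of $\Delta \tilde u = W'(\tilde u)$ with $|\tilde u(0)|\leq 1-\delta$. Continuity gives $\rho>0$ so that $|\tilde u|\leq 1-\delta/2$ on $B_\rho(0)$, hence $W(\tilde u_\eps)\geq c(\delta)>0$ on $B_\rho$ for small $\eps$. Undoing the rescaling produces
\[
\mu_\eps(B_{\eps\rho}(x_\eps)) \geq \frac{1}{c_0\,\eps}\int_{B_{\eps\rho}(x_\eps)} W(u_\eps)\,\d x \geq c'(\delta)\,\eps,
\]
i.e.\ the scale-invariant density $(\eps\rho)^{-1}\mu_\eps(B_{\eps\rho}(x_\eps))$ has a positive lower bound $c''(\delta)$ independent of $\eps$.

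To close the argument, I would invoke a diffuse monotonicity formula in the style of \cite{Roger:2006ta}: the map $\sigma\mapsto \sigma^{-1}\mu_\eps(B_\sigma(x_\eps))$ is monotone non-decreasing modulo an error controlled by $\alpha_\eps(B_\sigma)$ and the positive discrepancy $\xi_{\eps,+}(B_\sigma)$. Since $\alpha_\eps(\Omega)\leq C$ uniformly and $\mu_\eps(B_{r_0}(x_\eps))\to 0$, these correction terms become negligible over the range $[\eps\rho,R]$ for any fixed $R\in(0,r_0)$ and $\eps$ small, so the microscopic density bound propagates to $R^{-1}\mu_\eps(B_R(x_\eps))\geq c''(\delta)/2$, contradicting $\mu_\eps(B_R(x_\eps))\to 0$. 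The principal obstacle is this last propagation step: the monotonicity error near a point outside $\spt(\mu)$ must be sharply enough controlled that a positive density lower bound survives across all scales from $\eps$ to a fixed macroscopic radius, using only the global Willmore bound on $\alpha_\eps$ and the measure-theoretic smallness of $\mu_\eps$ at $x_0$.
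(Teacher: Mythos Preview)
Your approach --- microscopic mass from a blow-up/H\"older argument, then propagation via the diffuse monotonicity formula --- is the same skeleton the paper uses (packaged there as Lemma \ref{minimisation lemma}). The genuine gap is exactly the step you flag at the end, and it is not a technicality. The simplified monotonicity inequality reads
\[
(\eps\rho)^{-1}\mu_\eps\bigl(B_{\eps\rho}(x_\eps)\bigr)\;\leq\;3R^{-1}\mu_\eps\bigl(B_R(x_\eps)\bigr) + 3\,\alpha_\eps\bigl(B_R(x_\eps)\bigr) + 2\int_{\eps\rho}^R\frac{\xi_{\eps,+}(B_\sigma)}{\sigma^2}\,\d\sigma,
\]
and the term $3\,\alpha_\eps(B_R)$ is only \emph{bounded} by the global Willmore bound, not small. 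Your claim that it ``becomes negligible'' because $\alpha_\eps(\Omega)\leq C$ is simply false: a bounded quantity can certainly dominate your $c''(\delta)$, so you cannot conclude $R^{-1}\mu_\eps(B_R)\geq c''(\delta)/2$. The discrepancy integral is a second uncontrolled term; the paper handles it via the estimate of Lemma \ref{lemma RS estimate} feeding into a Gr\"onwall argument, which you have not supplied.

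The paper closes the gap by changing which measure produces the contradiction. It first proves (Lemma \ref{minimisation lemma}) that on the unit ball, $|u(0)|\leq\theta$ forces $\W_\eps(u)+S_\eps(u)\geq\theta_0>0$. It then rescales $B(x_\eps,r)$ to the unit ball: in $n=2$ the Willmore part scales like $r^{3-n}=r$ and the area part like $r^{1-n}=r^{-1}$ (Lemma \ref{rescaling lemma}), so
\[
r\,\alpha_\eps\bigl(B(x_\eps,r)\bigr) + r^{-1}\mu_\eps\bigl(B(x_\eps,r)\bigr)\;\geq\;\theta_0.
\]
Since $\mu_\eps(B(x_\eps,r))\to 0$, this forces $\liminf_{\eps\to 0}\alpha_\eps(B(x_\eps,r))\geq\theta_0/r$, hence $\alpha(\overline{B(x,2r)})\geq\theta_0/r$; letting $r\to 0$ contradicts finiteness of $\alpha$. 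The missing idea is thus: do not try to propagate a $\mu$-lower bound through an $\alpha$-error you cannot control; instead read the inequality as forcing $\alpha$ to be large near $x_0$, and exploit the $n=2$ scaling of $\alpha$ to blow it up.
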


\ton{This result has already been proved by Nagase and Tonegawa in \cite{nagase2007singular}, where they also show a $\Gamma-\liminf$-inequality for $\W_\eps + S_\eps$ in two space dimensions. The use of a monotonicity formula in their proof resembles ours with differences in the estimate on the discrepancy measures. We show our proof here to illustrate some ideas in the more involved argument for $n=3$.}

\begin{theorem}\label{Hausdorff n=2}
Let $n=2$ and $D\cc (-1,1)$. Then $u_\eps^{-1}(D)\to \spt(\mu)$ in the Hausdorff distance. If $n=3$, up to a subsequence, $u_\eps^{-1}(D)$ Hausdorff-converges to a set $K$ which contains $\spt(\mu)$.
\end{theorem}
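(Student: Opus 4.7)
I decompose the theorem into three tasks: (A) the containment $u_\eps^{-1}(D)\subset B_\delta(\spt(\mu))$ for $\eps$ small enough, which is needed only in $n=2$; (B) the containment $\spt(\mu)\subset B_\delta(u_\eps^{-1}(D))$ for $\eps$ small enough, which is needed in both dimensions; and (C) subsequential Hausdorff compactness in $n=3$. Task (C) is immediate from Blaschke's selection theorem because $u_\eps\equiv -1\notin D$ outside $\Omega$, so every $u_\eps^{-1}(D)$ sits in the fixed compact set $\overline\Omega$.

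Task (A) falls out of Theorem \ref{theorem n=2}. Since $D\cc(-1,1)$, I pick $\eta>0$ with $D\subset[-1+\eta,1-\eta]$; the uniform convergence $|u_\eps|\to 1$ on the compact set $\overline\Omega\setminus B_\delta(\spt(\mu))$ then gives $|u_\eps|>1-\eta$ there for $\eps$ small, hence $u_\eps^{-1}(D)\subset B_\delta(\spt(\mu))$.

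The substance lies in Task (B), which I attack by contradiction. If (B) fails, there exist $x\in\spt(\mu)$, $r>0$ and a subsequence $\eps_k\to 0$ with $u_{\eps_k}(B_r(x))\cap D=\emptyset$. Continuity of $u_{\eps_k}$ on the connected ball $B_r(x)$ confines its values to a single connected component of $\R\setminus D$. The energy bound $\int_\Omega\tfrac1{\eps_k}W(u_{\eps_k})\dx\leq c_0\,\mu_{\eps_k}(\Omega)\leq C$ together with the uniform $L^\infty$-bound on $u_{\eps_k}$ gives $u_{\eps_k}\to \pm 1$ in measure, ruling out bounded components of $\R\setminus D$. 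After a further extraction I may therefore assume $u_{\eps_k}>d_2:=\sup D$ throughout $B_r(x)$ (the case $u_{\eps_k}<\inf D$ is symmetric), and the in-measure limit is pinned to $u\equiv +1$ on $B_r(x)$.

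The main obstacle is to contradict $x\in\spt(\mu)$ from the constraint $u_{\eps_k}>d_2$ on $B_r(x)$. My plan combines two ingredients. First, a density lower bound $\mu_{\eps_k}(B_{r'}(x))\geq c\,r'^{n-1}$ for $r'<r$ and $\eps_k$ small, obtained from a diffuse monotonicity formula in the spirit of \cite{Roger:2006ta} (to be established in sections \ref{section 3 auxiliary 1}--\ref{section 3 auxiliary 2}). Second, the structural fact that phase fields confined to $(d_2,\infty)$ cannot sustain diffuse perimeter concentration without violating the diffuse Willmore bound: the optimal Modica--Mortola transition profile runs between $-1$ and $+1$, and any profile trapped strictly above $d_2>-1$ deviates from the optimality condition $\eps u''=\tfrac1\eps W'(u)$ on a non-negligible part of its length, so that the diffuse Willmore energy picks up a contribution that blows up unless the total length of such ``partial'' transitions inside $B_{r'}(x)$ vanishes. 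Combining the two ingredients forces $\mu_{\eps_k}(B_{r'}(x))\to 0$, which contradicts the density lower bound and establishes (B). The argument makes no use of the dimension, so (B) holds in both $n=2$ and $n=3$, and together with (A) and (C) it completes the proof.
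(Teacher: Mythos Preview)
Your handling of tasks (A) and (C) agrees with the paper: (A) via Theorem \ref{theorem n=2}, (C) via Blaschke selection in $\overline\Omega$.

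The gap is in task (B). The paper's argument is constructive, not by contradiction: near any $x\in\spt(\mu)$ one first locates (via Corollary \ref{some phi} and the covering arguments used in Lemma \ref{lemma phi sees mu}) a point $y\in B(x,r/2)$ with $|u_\eps(y)|\leq 1-\tau$ at which the discrepancy and normal-oscillation densities on the $4L\eps$-ball are small. Lemma \ref{lemma local transition} then says the blow-up $\tilde u_\eps$ is $C^{0,1/4}$-close to a full optimal profile $q$, which is a diffeomorphism from $\R$ onto $(-1,1)$; hence $u_\eps$ attains any prescribed $s\in D$ inside $B(y,3L\eps)\subset B(x,r)$.

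Your contradiction route hinges on the ``structural fact'' that a phase field with $u_{\eps_k}>d_2$ on $B_r(x)$ cannot carry diffuse perimeter there without the Willmore energy blowing up. This is true, but you have not proved it, and the mechanism you sketch (``deviation from $\eps u''=\tfrac1\eps W'(u)$ over a non-negligible length'') is a one-dimensional heuristic with no quantitative content in $n=2,3$. The only result in the paper that bounds $\mu_\eps$ on superlevel sets is Lemma \ref{lemma no mass at the edge}, and it requires the threshold $1-\tau>1/\sqrt2$; combining it with weak* convergence one gets $\mu(B_{r'}(x))\leq 4\tau\,\mu(B_{r'}(x))$, hence a contradiction only when $d_2>3/4$. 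For general $D\cc(-1,1)$ this does not suffice. The actual proof of your structural fact would have to pass through Lemma \ref{lemma local transition}: positive $\mu$-mass near $x$ forces a point where the blow-up is a full $(-1)\to(+1)$ profile, which then takes values below $d_2$ --- i.e.\ exactly the paper's direct argument. So the contradiction strategy buys nothing and leaves the essential step open.

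A smaller point: the density lower bound $\mu_{\eps_k}(B_{r'}(x))\geq c>0$ that you need does not come from the diffuse monotonicity formula (which yields \emph{upper} bounds $r^{1-n}\mu_\eps(B_r)\leq C$), but simply from $x\in\spt(\mu)$ and weak* convergence along a radius $r'$ with $\mu(\partial B_{r'}(x))=0$.
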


\mine{
In three dimensions, $K\neq \spt(\mu)$ is possible, see Remark \ref{remark no uniform}. As an illustration, the following statement describes the convergence of phase fields on lower dimensional objects that we need. In the proofs a slightly different version of the theorem is used to account for changing curves as $\eps\to 0$.
\begin{theorem}\label{theorem n=3}
Let $n=2$ and $u_\eps\in X$ a sequence such that 
\[
\limsup_{\eps\to 0}\:(\W_\eps + S_\eps)\: (u_\eps) <\infty.
\]
Denote $\mu = \lim_{\eps\to 0}\mu_\eps$ for a subsequence along which {$\mu_\eps$ converges weakly as Radon measures to a limit}. Take any $K\cc \R^n\setminus\spt(\mu)$ such that $0<\H^1(K)<\infty$. Then $|u_\eps|\to 1$ converges $\H^1|_K$-almost everywhere. 
\end{theorem}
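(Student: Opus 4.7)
Since in $n=2$ the statement is an immediate corollary of the uniform convergence in Theorem \ref{theorem n=2}, the interesting case is $n=3$, on which I focus. The plan is to argue by contradiction, combining a density lower bound for $\mu_\eps$ at points where $|u_\eps|$ is bounded away from $\pm 1$ with the diffuse monotonicity formula of R\"oger--Sch\"atzle. First I localise: since $K\cc \R^n\setminus \spt(\mu)$ and $\mu$ is Radon, I choose $\delta_1 > 0$ with $\dist(K,\spt\mu) > 2\delta_1$, so that $\mu(\overline{B_{\delta_1}(K)})=0$ and hence $\mu_\eps(B_{\delta_1}(K)) \to 0$ by weak-$*$ upper-semicontinuity on closed sets. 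Using the uniform $L^\infty$ bound on $u_\eps$ and the interior gradient estimate $\|\nabla u_\eps\|_{L^\infty(B_\eps(x))}\leq C/\eps$ (from $\|v_\eps\|_{L^2}\leq C\sqrt{\eps}$ via Calder\'on--Zygmund), I show the density lower bound: if $|u_\eps(x_0)|\leq 1-\eta$ then $u_\eps$ stays in $[-1+\eta/2, 1-\eta/2]$ on a ball $B_{c_0\eps}(x_0)$, so
\[ \hat\mu_\eps(c_0\eps) := \frac{\mu_\eps(B_{c_0\eps}(x_0))}{(c_0\eps)^{n-1}} \geq c_1(\eta) > 0. \]

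The main technical input is the R\"oger--Sch\"atzle diffuse monotonicity formula applied at $x_0$, in the form
\[ \hat\mu_\eps(\sigma) \leq C_1 \hat\mu_\eps(\rho) + C_2 \sqrt{\alpha_\eps(B_\rho(x_0))\,\hat\mu_\eps(\rho)} + o(1), \qquad c_0\eps\leq \sigma\leq \rho\leq r_0. \]
Combined with the density lower bound at $\sigma = c_0\eps$, this yields the dichotomy: whenever $|u_\eps(x_0)|\leq 1-\eta$, at every scale $\rho\in[c_0\eps,r_0]$ either $\hat\mu_\eps(\rho)\geq c_2(\eta) > 0$ or $\alpha_\eps(B_\rho(x_0))\geq c_3(\eta) > 0$.

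Suppose for contradiction that, along a subsequence, there exist $\eta > 0$ and $A\subset K$ with $\H^1(A) > 0$ on which $|u_\eps|\leq 1-\eta$ for every $\eps$. Pick $\delta_0\in(0,\delta_1)$; for $\eps$ small enough the density $\hat\mu_\eps(\delta_0)\leq \mu_\eps(B_{\delta_1}(K))/\delta_0^{n-1}$ becomes less than $c_2(\eta)$ uniformly on $B_{\delta_1}(K)$, so for each $x\in A$ the smallest scale $r(x)\in[c_0\eps,\delta_0]$ at which $\hat\mu_\eps(r(x)) < c_2(\eta)$ is well defined, and at that scale $\alpha_\eps(B_{r(x)}(x))\geq c_3(\eta)$ by the dichotomy. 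A Vitali cover extracts a disjoint subfamily $\{B_{r(x_j)}(x_j)\}$ with $A\subset\bigcup_j B_{5r(x_j)}(x_j)$; from disjointness and $\alpha_\eps(\Omega)\leq C$ the number of balls satisfies $N_\eps\leq C/c_3(\eta)$. Since $\diam(B_{5r(x_j)})\leq 10\delta_0$, the definition of Hausdorff measure yields
\[ \H^1_{10\delta_0}(A) \leq 10\sum_j r(x_j) \leq 10\,N_\eps\,\delta_0 \leq \frac{10\,C\,\delta_0}{c_3(\eta)}, \]
which tends to $0$ as $\delta_0\to 0$, contradicting $\H^1(A) > 0$. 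A countable exhaustion in $\eta\to 0$ finishes the proof.

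The main obstacle I expect is obtaining the monotonicity formula with usable error terms in three dimensions: $\alpha_\eps$ can retain mass outside $\spt(\mu)$ (the wild behaviour noted in Section \ref{section 2 background}), and this is exactly what prevents uniform convergence and forces the $\H^1$-a.e.\ form of the statement. Establishing the interior gradient estimate at scale $\eps$ and handling the discrepancy measure $\xi_\eps$ that actually appears in the monotonicity formula are the other technical points that need careful bookkeeping, but they follow familiar lines from \cite{Roger:2006ta}.
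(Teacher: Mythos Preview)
Your approach differs from the paper's, and there is a genuine gap.

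\textbf{The gap.} Your dichotomy rests on the claimed monotonicity inequality
\[
\hat\mu_\eps(\sigma)\leq C_1\hat\mu_\eps(\rho)+C_2\sqrt{\alpha_\eps(B_\rho)\,\hat\mu_\eps(\rho)}+o(1),\qquad c_0\eps\leq\sigma\leq\rho\leq r_0,
\]
with an error that is $o(1)$ \emph{uniformly} down to $\sigma=c_0\eps$. This is not what the R\"oger--Sch\"atzle machinery delivers in $n=3$. The actual diffuse monotonicity formula (Lemma~\ref{lemma estimated monotonicity formula}, equation~\eqref{our monoticity}) carries the discrepancy term $\int_r^R\rho^{-n}\xi_{\eps,+}(B_\rho)\,d\rho$, and the available bound on $\xi_{\eps,+}$ (Lemma~\ref{lemma RS estimate}) produces, after integration, a contribution of order $\eps^2\,\delta^{-M}\,r^{1-n}$. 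With the canonical choice $\delta=\log(\eps)^{-2}$ and $n=3$, this is $\log(\eps)^{2M}$ at $r\sim\eps$, which diverges. This is precisely why Lemma~\ref{minimisation lemma} proceeds in two steps: the monotonicity is integrated only from the macroscopic scale down to $\sqrt\eps$ (Step~1), and the further descent to $\eps$ requires the extra hypothesis $\alpha_\eps(B_{\sqrt\eps})+\int_{B_{\sqrt\eps}\cap\{|u|>1\}}W'(u)^2/\eps^3\leq\eps^\eta$ (the space $Y^3_\eps$, Step~2). Your proposal skips this intermediate scale entirely; the sentence ``handling the discrepancy measure \dots follows familiar lines'' is where the argument breaks.

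\textbf{What the paper does instead.} Rather than a scale-by-scale dichotomy and Vitali covering, the paper adapts Step~3 of the proof of Theorem~\ref{theorem ceps}. One places $N_\eps\gtrsim\H^1(K_s')/\sqrt\eps$ points of the bad set into disjoint $\sqrt\eps$-balls; if all of them violated the $Y^3_\eps$ condition one would sum to $\alpha_\eps(\Omega)\gtrsim N_\eps\,\eps^\eta\sim\eps^{\eta-1/2}\to\infty$, so by pigeonhole some point satisfies it. Lemma~\ref{minimisation lemma} then yields an $\alpha$-atom of size $\theta_0$ at its limit. Localising this in $M$ disjoint regions produces $\alpha(\overline\Omega)\geq M\theta_0$ for every $M$, which is the contradiction. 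The intermediate $\sqrt\eps$ scale is not a technicality you can defer---it is the mechanism that makes the argument close in three dimensions.

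\textbf{A secondary point.} Your set $A$ is assumed to satisfy $|u_\eps|\leq 1-\eta$ on $A$ for \emph{every} $\eps$ along the subsequence. The negation of $\H^1$-a.e.\ convergence only gives $K_s'=\{x:\limsup_\eps||u_\eps(x)|-1|\geq s\}$ with $\H^1(K_s')>0$; the bad $\eps$'s may depend on $x$. This is a real bookkeeping issue (and the paper's own sketch is brief here as well), but it is fixable once the core energy argument is in place; the monotonicity gap above is the essential obstruction.
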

}
For our application to connectedness, we define the total energy of an $\eps$-phase field as 

\begin{equation}\label{our energy}
\E_\eps(u) = \begin{cases}\W_\eps(u) + \eps^{-\sigma} \, (S_\eps(u) -S)^2 + \eps^{-\kappa}\,C_\eps(u) &u\in X\\ +\infty &\text{else}\end{cases}\end{equation}
for $\sigma\in(0,4), \kappa>\mine{0}$. 
\begin{remark}
\mine{If $\omega(\eps)<\infty$, e}xistence of mininimisers for the functional $\E_\eps$ is a simple exercise in the direct method of the calculus of variations, since uniform convergence of a minimising sequence \mine{$u_{\eps, k}\in W^{2,2}(\Omega) \to C^0(\overline\Omega)$} for fixed $\eps$ guarantees convergence of the distance term.
\end{remark}

\mine{
\begin{theorem}\label{theorem ceps}
Let $u_\eps\in X$ be a sequence such that $(\W_\eps + S_\eps)(u_\eps) \leq C$ for some $C>0$ and $\mu$, $\alpha$ Radon measures such that $\mu_\eps \stackrel *\wto \mu$, $\alpha_\eps\stackrel *\wto \alpha$. If $\spt(\mu)$ is disconnected, then
\[
\liminf_{\eps\to 0} C_\eps(u_\eps)>0.
\]
\end{theorem}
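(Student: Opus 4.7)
The plan is to exploit the disconnectedness of $\spt(\mu)$ by splitting it into two well-separated pieces, showing that (i) enough $\phi$-mass accumulates near each piece and (ii) the geodesic distance between the two pieces stays uniformly bounded below, then assembling the two estimates via Fubini.

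\textbf{Setup.} Write $\spt(\mu)=K_1\sqcup K_2$ with both $K_i$ non-empty, closed, and disjoint. Set $d := \dist(K_1,K_2) > 0$, $N_i := \{x : \dist(x,K_i) < d/4\}$ (with the radius $d/4$ perturbed slightly so that $\mu(\partial N_i)=0$), and $A := \ol\Omega\setminus(N_1\cup N_2)$. Then $N_1\cap N_2=\emptyset$, $\dist(A,\spt(\mu))\geq d/4$, and every rectifiable set connecting a point of $N_1$ to a point of $N_2$ contains a curve with $\H^1$-length at least $d/2$ inside $A$. Since $\mu$ is the mass measure of an integral $(n-1)$-varifold of density $\geq 1$ (R\"oger--Sch\"atzle), $\mu(N_i)\geq \mu(K_i)>0$.

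\textbf{Step 1 (mass near each component).} I first show that
\[
\liminf_{\eps\to 0}\frac{1}{\eps}\int_{N_i}\phi(u_\eps)\d x \;\geq\; c_i>0.
\]
Equipartition of energy ($|\xi_\eps|\to 0$ along bounded-energy sequences) combined with $\mu_\eps\stackrel{*}{\wto}\mu$ gives $\eps\inv\int_{N_i}W(u_\eps)\d x\to (c_0/2)\mu(N_i)>0$. A one-dimensional profile computation together with the coarea formula then transfers this to $\phi(u_\eps)$: on the $\eps$-wide transition tube near $K_i$, the set $\{u_\eps\in[\rho_1',\rho_2']\}$ (with $\phi\geq\phi_0>0$ on some $[\rho_1',\rho_2']\subset(\rho_1,\rho_2)$) has $n$-volume of order $\eps$ times the diffuse interfacial area.

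\textbf{Step 2 (distance lower bound).} Next I show
\[
\liminf_{\eps\to 0}\inf_{x\in N_1,\,y\in N_2} d^{F(u_\eps)}(x,y) \;>\;0.
\]
In dimension $n=2$, Theorem~\ref{theorem n=2} gives $|u_\eps|\to 1$ uniformly on the compact set $A$, and continuity of $F$ with $F(\pm 1)>0$ furnishes $F_0>0$ with $F(u_\eps)\geq F_0$ on $A$ for all $\eps$ small. Any admissible $K$ connecting $x\in N_1$ to $y\in N_2$ contains a curve with $\H^1$-length $\geq d/2$ in $A$, whence $\int_K F(u_\eps)\d\H^1\geq F_0\,d/2$. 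In dimension $n=3$ uniform convergence may fail, and instead I invoke the $\eps$-dependent variant of Theorem~\ref{theorem n=3} mentioned in the paragraph preceding its statement. For a near-optimal competitor $\gamma_\eps$ of $d^{F(u_\eps)}(x,y)$, that variant ensures $F(u_\eps)\to F(\pm 1)$ on an $\H^1$-positive subset of $\gamma_\eps\cap A$, and Fatou's lemma gives the analogous uniform bound.

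\textbf{Step 3 (combine).} Combining Steps 1 and 2,
\[
C_\eps(u_\eps)\;\geq\; \frac{1}{\eps^2}\int_{N_1}\int_{N_2}\phi(u_\eps(x))\,\phi(u_\eps(y))\,d^{F(u_\eps)}(x,y)\d x\d y \;\geq\; \frac{1}{\eps^2}\,(c_1\eps)(c_2\eps)\cdot\frac{F_0\,d}{4},
\]
which stays bounded away from zero as $\eps\to 0$.

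\textbf{Main obstacle.} The three-dimensional distance estimate is the heart of the argument. A priori the geodesic in $d^{F(u_\eps)}$ could follow a thin tunnel in which $u_\eps$ remains inside $(\rho_1,\rho_2)$ throughout $A$; since such a tunnel has codimension $2$, it is invisible to the limit measure $\mu$, so the Radon-measure convergence $\mu_\eps\stackrel{*}{\wto}\mu$ alone cannot exclude it. Ruling this out requires the convergence-on-curves machinery applied to $\eps$-dependent curves, which effectively charges a narrow tunnel with diffuse Willmore energy $\alpha_\eps$ that must diverge, contradicting the uniform bound $\W_\eps(u_\eps)\leq C$.
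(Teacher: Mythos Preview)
Your overall architecture matches the paper's: separate $\spt(\mu)$ into two pieces, show $\eps^{-1}\int_{N_i}\phi(u_\eps)$ stays bounded below (the paper's Lemma~\ref{lemma phi sees mu}), show the geodesic distance between the pieces is bounded below, and multiply. Your Step~1 is sketchier than Lemma~\ref{lemma phi sees mu} but can be replaced by a citation to it, and Step~2 for $n=2$ is exactly the paper's argument.

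The gap is Step~2 for $n=3$. You write that you ``invoke the $\eps$-dependent variant of Theorem~\ref{theorem n=3}'' and then apply Fatou. Two problems. First, that variant is not an independently available result: the remark before Theorem~\ref{theorem n=3} says a version for changing curves is used \emph{in the proofs}, and what this means concretely is that the content of that variant is established \emph{inside} the proof of the present theorem (indeed the paper afterwards derives Theorem~\ref{theorem n=3} as a simpler case). So your invocation is circular. Second, Fatou does not apply as stated because the near-optimal competitors $\gamma_\eps$ vary with $\eps$; there is no fixed underlying measure space on which $F(u_\eps)|_{\gamma_\eps\cap A}$ forms a sequence of functions.

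What the paper actually does in three dimensions is the argument you describe in your ``Main obstacle'' paragraph but never execute. Assuming $\dist^{F(u_\eps)}(U_1,U_2)\to 0$, one projects the connecting set $K_\eps$ via $\pi(x)=\dist(x,U_1)$ onto $[0,\delta]$, observes that the set of $t$ whose preimage in $K_\eps$ lies entirely in $\{u_\eps\notin[\theta_1,\theta_2]\}$ has $\H^1$-measure at most $c_\eps\to 0$, and hence finds $N_\eps\gtrsim\eps^{-1/2}$ points $x_{i,\eps}\in K_\eps$ with $u_\eps(x_{i,\eps})\in[\theta_1,\theta_2]$ and mutually disjoint $\sqrt{\eps}$-balls. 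If every such ball carried $\alpha_\eps$-mass $\geq\eps^\eta$ one would get $\alpha_\eps(\Omega)\gtrsim\eps^{\eta-1/2}\to\infty$, contradicting boundedness; so some $x_{i_\eps,\eps}$ lies in the admissible class $Y^3_\eps$ of Lemma~\ref{minimisation lemma}, forcing an $\alpha$-atom of size $\geq\theta_0$ at the limit point. Localising onto $M$ disjoint sub-intervals manufactures $M$ such atoms for any $M$, contradicting $\alpha(\overline\Omega)<\infty$. This ball-counting plus Lemma~\ref{minimisation lemma} step is the missing ingredient in your proposal.
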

}

Using these results, we can show the following.

\begin{theorem}
\label{main theorem}
 Let $n=2, 3$ and $u_\eps\in X$ a sequence such that $\liminf_{\eps\to 0}\E_\eps(u_\eps)<\infty$. Then the diffuse mass measures $\mu_\eps$ converge weakly* to a measure $\mu$ with connected support $\spt(\mu)\subset\overline\Omega$ and area $\mu(\overline\Omega) = S$.
\end{theorem}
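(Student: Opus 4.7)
My plan is to assemble the ingredients established earlier in the paper; the statement is essentially a corollary of Theorem \ref{theorem ceps} together with the choice of penalisation exponents. From $\liminf_{\eps\to 0}\E_\eps(u_\eps)<\infty$, I would first extract a subsequence $\eps_k\to 0$ along which $\E_{\eps_k}(u_{\eps_k})$ converges to this $\liminf$ and hence stays uniformly bounded by some constant $C$. Since all three summands in the definition \eqref{our energy} are non-negative, each is individually bounded along the subsequence. This yields three consequences: (i) $\W_{\eps_k}(u_{\eps_k}) + S_{\eps_k}(u_{\eps_k})\leq C'$; (ii) $|S_{\eps_k}(u_{\eps_k}) - S|\leq \sqrt{C}\,\eps_k^{\sigma/2}\to 0$ because $\sigma>0$; and (iii) $C_{\eps_k}(u_{\eps_k})\leq C\,\eps_k^{\kappa}\to 0$ because $\kappa>0$.

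Next, using the uniform bound on $\mu_{\eps_k}(\R^n) = S_{\eps_k}(u_{\eps_k})$ (and the analogous bound for $\alpha_{\eps_k}$), the compactness theorem for Radon measures lets me pass to a further subsequence, not relabelled, along which $\mu_{\eps_k}\stackrel *\wto \mu$ and $\alpha_{\eps_k}\stackrel *\wto\alpha$ for finite Radon measures $\mu,\alpha$ supported in $\overline\Omega$. Because $\overline\Omega\cc\R^n$ is compact and all $\mu_{\eps_k}$ have support contained in it, testing the weak* convergence against a cutoff $\eta\in C_c(\R^n)$ with $\eta\equiv 1$ on a neighbourhood of $\overline\Omega$ gives $\mu_{\eps_k}(\R^n) \to \mu(\R^n)$. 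Combined with (ii) this proves $\mu(\overline\Omega) = \lim_{k\to\infty}S_{\eps_k}(u_{\eps_k}) = S$, which is the area claim, and the support inclusion $\spt(\mu)\subset\overline\Omega$ is immediate.

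For the connectedness claim I would argue by contradiction: suppose $\spt(\mu)$ were disconnected. By (i) the hypotheses of Theorem \ref{theorem ceps} are satisfied along $\eps_k$, so $\liminf_{k\to\infty}C_{\eps_k}(u_{\eps_k})>0$, which contradicts (iii). Therefore $\spt(\mu)$ must be connected. The main obstacle in the whole chain of results is Theorem \ref{theorem ceps} itself, where the intricate analysis of the diffuse interface structure (via the Hausdorff-type convergence of Theorem \ref{Hausdorff n=2} and the convergence on curves of Theorem \ref{theorem n=3}) is used to bound the geodesic-distance double integral from below. Once that theorem is in hand, the present result is essentially a bookkeeping argument, the only mild subtlety being that the statement is phrased as if the full sequence $\mu_\eps$ converges, while strictly the conclusion holds along the extracted subsequence along which $\E_{\eps_k}$ stays bounded.
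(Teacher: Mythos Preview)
Your proposal is correct and follows essentially the same route as the paper: extract a subsequence along which the energy is bounded, read off the three individual bounds, use Radon-measure compactness to obtain $\mu$ (and $\alpha$), verify $\mu(\overline\Omega)=S$ and $\spt(\mu)\subset\overline\Omega$ from weak* convergence combined with $S_{\eps_k}\to S$, and then invoke Theorem~\ref{theorem ceps} to rule out disconnected $\spt(\mu)$ because $C_{\eps_k}\to 0$. Your cutoff argument for the total mass is equivalent to the paper's use of upper/lower semicontinuity of $\mu_\eps$ on closed/open sets, and you are right to flag that the conclusion is strictly along the extracted subsequence.
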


Using \cite{Roger:2006ta}, $\mu$ is also the mass measure of an integral varifold. The main result of \cite{Roger:2006ta} can be applied to deduce $\Gamma$-convergence of our functionals in the following sense: 

\begin{corollary}\label{theorem gamma convergence}
Let $n=2,3$, $S>0$, $\Omega\cc\R^n$ and $E\cc\Omega$, with smooth boundary $\partial E\in C^2$ with area $\H^{n-1}(\partial E) = S$. Then
\[
\Gamma(L^1(\Omega))-\lim_{\eps\to 0} \E_\eps(\chi_{E}-\chi_{E^c}) = 
\left\{
\begin{array}{cl}
 \W(\partial E) \:  \label{GammaPres} & \partial E \text{ is connected}\\
 +\infty & \text{otherwise}
 \end{array}\right.
\]
\end{corollary}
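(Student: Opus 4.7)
The plan is to verify the $\Gamma$-liminf and recovery inequalities separately at $u = \chi_E - \chi_{E^c}$, combining the Bellettini-Paolini recovery and R\"oger-Sch\"atzle's liminf theorem for $\W_\eps + S_\eps$ with the rigidity supplied by Theorem \ref{main theorem}.

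For the recovery inequality when $\partial E$ is connected, I would use the Bellettini-Paolini profile $u_\eps(x) = q(d_E(x)/\eps)$, where $q(t) = \tanh(t/\sqrt{2})$ is the optimal one-dimensional transition and $d_E$ is the signed distance to $\partial E$, smoothly interpolated to $-1$ on an annular collar inside $\Omega\setminus\overline E$ so that $u_\eps \in X$. Standard calculations \cite{Bellettini:1993vg, Roger:2006ta} give $u_\eps \to \chi_E - \chi_{E^c}$ in $L^1$, $\W_\eps(u_\eps) \to \W(\partial E)$ and $|S_\eps(u_\eps) - S| = O(\eps^2)$, so the area penalty $\eps^{-\sigma}(S_\eps - S)^2 = O(\eps^{4-\sigma}) \to 0$ for every $\sigma \in (0,4)$. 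For the topological term, $C^2$-regularity of the connected $\partial E \cc \Omega$ provides a tubular neighborhood $U$ of some fixed radius on which $u_\eps$ is exactly the signed-distance profile; for small $\eps$ the set $\{\rho_1 < u_\eps < \rho_2\}$ sits inside $U$, is connected and has diameter bounded independently of $\eps$. Any two of its points are joined by a path in this set along which $F(u_\eps) \equiv 0$; since $\omega(\eps) \to \infty$ eventually exceeds the uniform length bound, the path is admissible, so $d^{F(u_\eps)}(x,y) = 0$ whenever $\phi(u_\eps(x))\phi(u_\eps(y)) > 0$. Hence $C_\eps(u_\eps) = 0$ exactly for small $\eps$, and $\E_\eps(u_\eps) \to \W(\partial E)$.

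For the liminf inequality, take any $u_\eps \to \chi_E - \chi_{E^c}$ in $L^1$ with $\liminf \E_\eps(u_\eps) < \infty$; pass to a subsequence on which $\E_\eps(u_\eps) \leq C$ and then further to one with $\mu_\eps \stackrel{*}{\wto} \mu$. Since $\sigma > 0$ the penalty forces $S_\eps(u_\eps) \to S$, and because the $\mu_\eps$ are all supported in the compact set $\overline\Omega$, tightness gives $\mu(\overline\Omega) = S$. The comparison $\tfrac12 |Du| \leq \mu$ recalled in section \ref{section 2 background} yields $\mu \geq \H^{n-1}|_{\partial E}$; together with $\H^{n-1}(\partial E) = S = \mu(\overline\Omega)$ this forces equality of measures $\mu = \H^{n-1}|_{\partial E}$, and in particular $\spt(\mu) = \partial E$. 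If $\partial E$ is disconnected this contradicts Theorem \ref{main theorem}, so $\liminf \E_\eps = +\infty$, as required. If $\partial E$ is connected, the rigidity $\mu = \H^{n-1}|_{\partial E}$ forces the integral $(n-1)$-varifold $V$ produced by R\"oger-Sch\"atzle's theorem to be $\partial E$ with multiplicity one, so the generalised mean curvature $H_V$ equals the classical mean curvature of $\partial E$ and $\liminf \W_\eps(u_\eps) \geq \int H_V^2 \, d\mu = \W(\partial E)$; non-negativity of the remaining summands of $\E_\eps$ completes the argument.

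The main technical point will be the exact vanishing $C_\eps(u_\eps) = 0$ in the recovery, which needs both connectedness and an $\eps$-uniform diameter bound on $\{\rho_1 < u_\eps < \rho_2\}$. Both follow from the tubular neighborhood theorem for $\partial E \in C^2$ at a scale independent of $\eps$, so this is a careful packaging of classical facts rather than a genuine new difficulty; all of the truly new analytic content is encoded in Theorem \ref{main theorem}.
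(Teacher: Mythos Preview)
Your proposal is correct and follows essentially the same route as the paper: the Bellettini--Paolini profile for the recovery sequence, with $C_\eps(u_\eps)\equiv 0$ coming from connectedness of the tubular level region and the length cutoff $\omega(\eps)\to\infty$; and for the $\liminf$, Theorem~\ref{main theorem} to rule out disconnected $\partial E$ together with R\"oger--Sch\"atzle for the Willmore lower bound. The only cosmetic differences are that the paper builds the connecting paths explicitly (normal segment, then along a fixed level set, then normal segment) rather than invoking path-connectedness of the whole slab $\{\rho_1<u_\eps<\rho_2\}$, and that the paper cites \cite{Roger:2006ta} directly for $\liminf\W_\eps\geq\W(\partial E)$ whereas you first identify $\mu=\H^{n-1}|_{\partial E}$ from $\tfrac12|Du|\leq\mu$ and the mass constraint --- your extra step is a clean way to justify the appeal to Theorem~\ref{main theorem} in the disconnected case.
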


Here we have adopted the notation of \cite{Roger:2006ta} where $\Gamma$-convergence is said to hold at a point if the $\liminf$- and $\limsup$-inequalities hold at that point. This distinction is necessary since it is not clear what $\Gamma$-convergence properties hold at other points $u\in BV(\Omega,\{-1,1\})$ since $\W_\eps$ does not converge to the $L^1$-lower semi-continuous envelope of $\W$ at non-embedded sets even in two dimensions. 

This issue stems from the fact that the stationary Allen-Cahn equation $-\Delta u + W'(u) = 0$ \stephan{admits global saddle-solutions which have vanishing sets asymptotic to a union of lines. The best known example vanishes along the coordinate axes and is positive in the first and third quadrants and negative in the second and fourth ones, see \cite{del2010multiple}.} These solutions can be used to approximate transversal crossings with zero Willmore energy, while the lower semi-continuous envelope of Willmore's energy/Euler's elastica energy becomes infinite at those points. Interestingly enough, the crossing has zero Willmore-energy as a varifold, so that the energy can still be justified, despite the fact that it does not give the sensible result in our situation. 

While heuristic considerations suggest -- and numerical simulations appear to confirm -- that at least in $n=2$ dimensions and when $0\notin \spt(\phi)$, our additional term in the energy might prevent saddles, we do not investigate convergence at non-embedded points further.

\begin{remark}\label{remark no uniform}
Uniform convergence generally fails in three dimensions. This can be seen by taking a sequence $u^\eps$ as the optimal interface approximation which will be given in the proof of Corollary \ref{theorem gamma convergence} and adding to it a perturbation $g((x-x_0)/\eps)$ for any $g\in C_c^\infty(\R^n)$. It can easily be verified that the energy will remain finite (at least if $|g|\ll 1$) but clearly the functions do not converge uniformly. 

Heuristic arguments show that for modifications $u_\eps = u^\eps + \eps^\alpha\, g(\eps^{-\beta}\cdot)$, the coefficients $\alpha = \beta = 1$ should be optimal in three dimensions and $\alpha = 1/2$, $\beta = 1$ if $n=2$, so we expect a convergence rate of $\sqrt{\eps}$ in the two-dimensional case.

\mine{Since uniform convergence does not hold, it follows that the pre-images of certain level sets can have additional points in the Hausdorff limit.}
\end{remark}

\begin{remark}
Since $S_\eps(u_\eps)$ is uniformly bounded, \stephan{a standard argument for the Modica-Mortola model shows} that $u_\eps\to \chi_E- \chi_{\Omega\setminus E}$ in $L^p$ for $p<4/3$. We are going to show in Lemma \ref{regularity lemma} that the sequence $u_\eps$ is uniformly bounded in $L^\infty(\R^n)$ when additionally $\limsup_{\eps\to 0} \W(u_\eps)<\infty$, which implies $L^p$-convergence for all $p<\infty$.
\end{remark}

\section{Proofs}\label{section proofs}

The proofs are organised in the following way. First, anticipating the results of section \ref{section 3 phase field}, we show $\Gamma$-convergence of $\E_\eps$ to $\W$. We decided to move the proof to the beginning since it is virtually independent of all the other proofs in this article and can be isolated, but introduces the optimal interface transitions which will be needed later. After that, we proceed chronologically with technical Lemmata (sections \ref{section 3 auxiliary 1} and \ref{section 3 auxiliary 2}) and the proof of Theorems \ref{theorem n=2}, \mine{\ref{theorem n=3},} \ref{Hausdorff n=2}, \mine{\ref{theorem ceps}} and \ref{main theorem} in section \ref{section 3 phase field}.

\subsection{\texorpdfstring{Proof of $\Gamma$-Convergence}{Proof of Gamma-Convergence}}\label{section 3 Gamma}
We now proceed to prove Corollary \ref{theorem gamma convergence}.

\begin{proof}[Proof of the $\liminf$-inequality:]
It will follow from Theorem \ref{main theorem} that $\E_\eps(u_\eps)\to \infty$ if $\partial E$ is disconnected. If $\partial E$ is connected, the main part of this inequality is to show that if $u_\eps\to \chi_E - \chi_{E^c}$ in $L^1(\Omega)$ and $\mu_\eps(\Omega)\leq S+1$, then $\liminf_{\eps\to 0}\E_\eps(u_\eps)\geq \W(\partial E)$. Since $\E_\eps\geq \W_\eps$ and enforces the surface area estimate, we obtain with \cite{Roger:2006ta} that
\[
\liminf_{\eps\to 0} \E_\eps(u_\eps)\geq \W(\partial E).
\]
\end{proof}

\begin{proof}[Proof of the $\limsup$-inequality:]
We may restrict our analysis to the case of connected boundaries with area $\H^{n-1}(\partial E) = S$. In this proof, we will construct a sequence $u^\eps\in X$ such that
\[
\lim_{\eps\to 0} ||u^\eps - (\chi_E-\chi_{E^c})||_{1,\Omega} = 0,\qquad \lim_{\eps\to 0}\E_\eps(u^\eps) = \W(\partial E).
\]
The construction is standard and holds for arbitrary $n\in\N$. We take the solution of the one-dimensional problem and apply it to the signed distance function of $\partial E$, thus recreating the shape of $\partial E$ with an (approximate) optimal profile for the transition from $-1$ to $1$. For $\delta>0$ consider
\[
U_\delta:= \{x\in \R^n\:|\:\dist(x,\partial E) <\delta\}.
\]
Since $E\cc \Omega$, $U_\delta\subset\Omega$ for all sufficiently small $\delta$, and since $\partial E\in C^2$ is embedded, there is $\delta>0$ such that
\[
\psi:\partial E\times (-\delta,\delta)\to U_\delta,\quad \psi(x,t) = x + t\,\nu_x
\]
is a diffeomorphism. \stephan{We have the closest point projection $\pi:U_\delta\to \partial E$ and the signed distance function $d(x) = \sdist(x,\partial E)$ from $\partial E$ which is positive inside $E$ and negative outside $E$. Both are $C^2$-smooth on $U_\delta$ and they satisfy $\psi(\pi(x),\sdist(x)) = x$ and} (see e.g.\ \cite[Section 14.6]{Gilbarg:2001vb}) 
\[
\nabla d(x) = \nu_{\pi(x)},\qquad \Delta d(x) = H_{\pi(x)} + C_{\pi(x)}\cdot d(x) + O(d(x)^2)
\] 
\stephan{where $C_{\pi(x)}$ is a constant depending on the base projection point $\pi(x)$.}
Let us consider the optimal transition between $-1$ and $1$ in one dimension. This optimal profile is a stationary point of $S_1$, i.e.\ a solution of $- q'' + W'(q) = 0$ (and thus a zero energy point of $\W_1$) with the side conditions that $\lim_{t\to \pm \infty}q(t) = \pm 1$. {Note that the optimal profile satisfies
\[
q'' = W'(q) \quad\Ra\quad q''\,q' = W'(q)\,q' \quad \Ra\quad \frac{d}{dt}\left((q')^2 - W(q)\right) = 0,
\]
so we find that $(q')^2 - W(q)\equiv c$. We look for transitions where both $(q')^2$ and $W(q)$ are integrable over the whole real line, and since $\lim_{t\to\pm\infty}W(q(t)) = 0$, we see that $c=0$ and $(q')^2 \equiv W(q)$. This gives us equipartition of energy already before integration.} For simplicity, we focus on $W(q) = (q^2-1)^2/4$ which has the optimal interface $q(t) = \tanh(t/\sqrt{2})$. Note that the functional rescales appropriately under dilations of the parameter space so that we have equipartition of energy before integration also in the $\eps$-problem. The disadvantage of the hyperbolic tangent is that it makes the transition between the roots of $W$ only in infinite space, so we choose to work with approximations $q_\eps\in C^\infty(\R)$ such that

\begin{enumerate}
\item $q_\eps(t) = q(t)$ for $|t|\leq \delta/(3\eps)$,
\item $q_\eps(t) = 1$ for $t\geq \delta/(2\eps)$,
\item $q_\eps(-t) = - q_\eps(t)$,
\item $q_\eps'>0$\stephan{,
\item $|q_\eps''|(t)\leq \eps^3$ for all $|t|\geq \delta/\eps$}.
\end{enumerate}
\stephan{Such a function is easily found as $\tanh$ converges to $\pm 1$ exponentially fast.} Then we set
\[
u^\eps(x) = q_\eps(d(x)/\eps).
\]
A direct calculation establishes that with this choice of $u^\eps$, we have $|S_\eps(u^\eps)-S|\leq \eps^\gamma$ for all $\gamma<2$, {$|\xi_\eps|\leq \eps^m$ for all $m\in\N$} and $\lim_{\eps\to 0}\W_\eps(u^\eps) = \W(\partial E)$. It remains to show that $\lim_{\eps\to 0}\eps^{-\kappa}C_\eps(u^\eps) = 0$. We will show that even $C_\eps(u^\eps)\equiv 0$ along this sequence. Since $\partial E$ is connected and $\psi$ is a diffeomorphism, all the level sets 
\[
\{u^\eps = \rho\} = \psi (\partial E, \eps\,q_\eps^{-1}(\rho))
\]
are connected manifolds for $\rho\in(-1,1)$ \mine{and $\eps$ small enough such that $q_\eps = q$ around $q\inv(\rho)$}. We know that
\[
\{\phi(u^\eps)>0\} = \{\rho_1<u^\eps<\rho_2\}
\]
and pick any $\rho\in(\rho_1,\rho_2)$. Now let $x,y\in\Omega$, $\phi(u^\eps(x)), \phi(u^\eps(y))>0$. We can construct a curve from $x$ to $y$ by setting piecewise\stephan{
\begin{align*}
\gamma_1:[0, d(x)]]\to\Omega\:&,\quad \gamma_1(t) = \pi(x) + t\,\nu_{\pi(x)},\\
\gamma_3:[0, d(y)]\to \Omega \:&,\quad \gamma_3(t) = \pi(y) + t\,\nu_{\pi(y)}
\end{align*}
and $\gamma_2$ any curve connecting $\pi(x)$ to $\pi(y)$ in $\{u^\eps=\rho\}$. This curve exists since connected manifolds are path-connected. The curve $\gamma = \gamma_3\oplus\gamma_2\oplus\gamma_1^{-1}$ connects $x$ and $y$} and satisfies by construction $\phi(\gamma(t))>0$, so $F(\gamma(t)) \equiv 0$. Therefore we deduce 
\[
d^{F(u^\eps)}(x,y)=0
\]
if $\phi(u^\eps(x)), \phi(u^\eps(y))\neq 0$ \mine{since} the connecting curves have uniformly bounded length and $\omega(\eps) \to \infty$. Thus in particular
\[
\frac1{\eps^2}\int_{\Omega\times\Omega}\phi(u^\eps(x))\,\phi(u^\eps(y))\, d ^{F(u^\eps)}(x,y)\d x\d y \equiv 0.
\]
\end{proof}

\subsection{Auxiliary Results I}\label{section 3 auxiliary 1}

A lot of our proofs will be inspired by \cite{Roger:2006ta} which again draws from \cite{Hutchinson:2000df}. We will generally cite \cite{Roger:2006ta} because it treats the more relevant case for our study. In this section, we will prove a number of auxiliary results which concern either general properties of phase fields or properties away from the support of the limiting measure $\mu$ which will enable us to investigate their convergence later. Results concerning phase interfaces are postponed until section \ref{section 3 auxiliary 2}. 

\mine{
We start with an optimal regularity lemma for phase fields on small balls. This is an improvement upon \cite[Proposition 3.6]{Roger:2006ta} where in three dimensions, only boundedness in $L^p$ for finite $p$ and a slow growth in $L^\infty$ could be obtained. The new $L^\infty$-bound also implies local H\"older continuity which is used extensively later.
}

\begin{lemma}\label{regularity lemma}
Let $n=2,3$, $\Omega\cc\R^n$ and $u_\eps\in X$ such that $\bar\alpha:= \limsup_{\eps\to 0}\W_\eps(u_\eps) < \infty$. Then there exists $\eps_0>0$ such that for all $\eps<\eps_0$

\begin{enumerate}
\item $||u_\eps||_{\infty,\R^n}\leq C$ where $C$ is a constant depending only on $\bar\alpha$ and $n$.

\item $u_\eps$ is $1/2$-H\"older continuous on $\eps$-balls, i.e.
\[
|u_\eps(x) - u_\eps(y)| \leq \frac{C}{\sqrt{\eps\,}}\,|x-y|^\frac12\:\qquad\forall\ x\in\R^n,\:\:y\in B(x,\eps).
\]
Again, the constant $C$ depends only on $\overline \alpha$ and $n$.
\end{enumerate}
\end{lemma}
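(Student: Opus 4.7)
The plan is to rescale to unit length scale and combine a Moser-type $L^\infty$ bound (exploiting the coercivity of $W'$) with classical Sobolev embedding to upgrade to H\"older continuity. For a fixed $x_0\in\R^n$ I would set $\tilde u(y):= u_\eps(x_0+\eps y)$ on the unit ball $B_1\subset\R^n$, so that the diffuse mean curvature equation $-\eps\Delta u_\eps + \eps^{-1}W'(u_\eps) = v_\eps$ turns into the $\eps$-free semilinear equation
\[
-\Delta \tilde u + \tilde u^3 - \tilde u = \eps\,\tilde v_\eps,\qquad \tilde v_\eps(y) := v_\eps(x_0+\eps y).
\]
A change of variables in $\int v_\eps^2\dx \le c_0\bar\alpha\,\eps$ gives $\|\eps\tilde v_\eps\|_{L^2(B_1)}^2 \le C\eps^{3-n}$, which is uniformly bounded when $n\le 3$ (and in fact small when $n=2$). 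Since $u_\eps\equiv -1$ outside $\Omega$, it suffices to bound $\tilde u$ for $x_0$ in a fixed neighbourhood of $\ol\Omega$.

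For part (i) I would run a Moser / De Giorgi iteration on $\tilde u$. The central observation is that $W'(u)=u^3-u$ is monotone and cubically coercive for $|u|>1$, so it acts as an absorbing nonlinearity on the super-level sets $\{\tilde u>k\}$ with $k\ge 2$. Testing the equation against $(\tilde u-k)_+^{2q-1}\eta^{2q}$ for a suitable cutoff $\eta$, integrating by parts, controlling the forcing term by Cauchy--Schwarz against the $L^2$-norm of $\eps\tilde v_\eps$, and applying Sobolev embedding on the left leads to an iterative inequality of the form
\[
\|(\tilde u-k)_+\|_{L^{\beta p}(B_r)} \le \Phi(q)\,\|(\tilde u-k)_+\|_{L^p(B_{r'})}
\]
with $\beta>1$. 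Iterating in $q$ terminates in an $L^\infty$-bound depending only on $\bar\alpha$ and $n$; the analogous argument applied to $-\tilde u$ furnishes the lower bound. Rescaling back to $x$-variables gives (i).

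Granting (i), part (ii) follows from standard elliptic regularity. Writing $-\Delta \tilde u = \eps\tilde v_\eps + \tilde u - \tilde u^3$, the right-hand side is uniformly bounded in $L^2(B_1)$, hence interior Calder\'on--Zygmund estimates give a uniform $W^{2,2}(B_{1/2})$-bound on $\tilde u$. Since $n\le 3$, the Sobolev embedding $W^{2,2}\hookrightarrow C^{0,1/2}$ (obtained, e.g., via $W^{2,2}\hookrightarrow W^{1,6}\hookrightarrow C^{0,1/2}$ in three dimensions) yields a uniform H\"older-$\tfrac12$ estimate on $\tilde u$ on $B_{1/2}$. Unwinding the substitution $y=(x-x_0)/\eps$ produces precisely the factor $\eps^{-1/2}$ in (ii).

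The main obstacle is the iteration in step (i): the forcing $\eps\tilde v_\eps$ lies only in $L^2$ and is not pointwise bounded, so the standard Moser scheme for bounded right-hand sides does not apply directly; one has to exploit the coercivity of $W'$ to absorb high powers of $\tilde u$ on the left-hand side. Care is also needed to localise via cutoffs rather than relying on boundary data, since $\tilde u$ is not a priori globally bounded on $\tilde\Omega = \eps^{-1}(\Omega-x_0)$, whose size blows up as $\eps\to 0$.
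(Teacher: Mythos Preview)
Your overall architecture --- rescale to unit length, obtain a uniform $L^\infty$ bound, then feed this into Calder\'on--Zygmund plus the embedding $W^{2,2}\hookrightarrow C^{0,1/2}$ --- is correct, and your part (ii) is essentially identical to the paper's final step. The difference is entirely in how the $L^\infty$ bound is obtained.

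You propose a Moser/De Giorgi iteration on the rescaled semilinear equation, using the cubic coercivity of $W'$ to absorb high powers of $\tilde u$ and compensate for the merely-$L^2$ forcing. This is workable but, as you yourself flag, delicate. The paper bypasses iteration altogether via a single global integration by parts: expanding $\alpha_\eps(\{u_\eps>1\})$ and integrating the cross term $-\frac{2}{\eps}\int \Delta u_\eps\,W'(u_\eps)$ by parts, the boundary term vanishes (since $W'(1)=0$ on $\partial\{u_\eps>1\}$) and one is left with the pointwise inequality
\[
\int_{\{|u_\eps|>1\}} \frac{1}{\eps^3}\,W'(u_\eps)^2\,\d x \;\le\; c_0\,\alpha_\eps(\Omega).
\]
Because $(|t|-1)_+^2 \le W'(t)^2$ for $|t|\ge 1$, this immediately gives uniform $L^2$-bounds on both $\tilde u_\eps$ and $W'(\tilde u_\eps)$ on the unit ball, hence on $\Delta\tilde u_\eps$ by the triangle inequality, and the Sobolev embedding delivers the $L^\infty$ and H\"older bounds simultaneously --- no bootstrap required.

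The paper's route is shorter and more transparent; it does, however, use that $\{|u_\eps|>1\}\Subset\Omega$ (coming from $u_\eps\in X$) to make the integration by parts clean. Your iteration is purely local and would survive without that hypothesis, at the price of the extra machinery you describe.
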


\begin{proof}
We will argue using Sobolev embeddings for blow ups of $u_\eps$ onto the natural length scale. In the first step, we show the set $\{|u_\eps|>1\}$ to be small. In the second step, we estimate the $L^2$-norm of the blow ups, in the third we estimate the full $W^{2,2}$-norm and use suitable embedding theorems to conclude the proof of regularity. 

{\bf Step 1.} First observe that
\begin{align} \nonumber
\alpha_\eps(\Omega) &\geq \alpha_\eps(\{u_\eps> 1\})\\ \nonumber
	&= \frac1{c_0}\int_{\{u_\eps>1\}} \frac1\eps\,\left(\eps\,\Delta u_\eps - \frac1\eps\,W'(u_\eps)\,\right)^2\d x\\ \nonumber
	&= \frac{-2}{c_0\,\eps}\int_{\partial \{u_\eps > 1\}}W'(u_\eps)\,\partial_\nu u_\eps\d \H^{n-1} \\ \nonumber
&\qquad +\frac1{c_0}\int_{\{u_\eps>1\}}\eps\,(\Delta u_\eps)^2 + \frac2\eps\,W''(u_\eps)\,|\nabla u_\eps|^2 + \frac1{\eps^3}\,(W'(u_\eps)\,)^2\d x\\ \label{eq W'3}
	&\geq \frac1{c_0}\int_{\{u_\eps>1\}}\eps\,(\Delta u_\eps)^2 + \frac4\eps\,|\nabla u_\eps|^2 + \frac1{\eps^3}\,(W'(u_\eps)\,)^2\d x
\end{align}
using that $W''(t)\geq 2$ for $t\geq 1$. The boundary integral vanishes if $\{u_\eps>1\}$ is of finite perimeter since $u_\eps\in W^{2,2}\embeds C^{0,1/2}$ is continuous and $W'(1) = 0$. If this is not the case, take $\theta \searrow 1$ converging from above such that $\{u_\eps>\theta\}$ is of finite perimeter. This holds for almost all $\theta\in \R$. The sign of the boundary integral can be determined since $\partial_\nu u_\eps < 0$ on the boundary of $\{u_\eps>\theta\}$ and $W'(\theta)>0$ for $\theta>1$ so that the same inequality can still be established. By symmetry, the same argument works for $\{u_\eps<-1\}$.

{\bf Step 2.} Let $x_\eps\in \Omega$ be an arbitrary sequence and define the blow up sequence $\tilde u_\eps:\R^n\to \R$ by
\[
\tilde u_\eps(y) = u_\eps(x_\eps + \eps y).
\]
Then we observe that
\begin{align*}
\int_{B(0,2)}\tilde u_\eps^2 \d x &= \int_{B(0,2)} \left(\,|\tilde u_\eps|-1+1\right)^2\d y\\
	&\leq \int_{B(0,2)} \left(\,(|\tilde u_\eps|-1)_++1\right)^2\d y\\
	&\leq 2\,\int_{B(0,2)}(\,|\tilde u_\eps|-1)_+^2 + 1\d y\\
	&\leq 2\,\eps^{3-n} \int_{\{|u_\eps|>1\}} \frac1{\eps^3}\,W'(u_\eps)^2\d y + 2^{n+1}\,\omega_n \\
	&\leq 2\,\left\{2^n\,\omega_n + c_0\,\eps^{3-n}\,\alpha_\eps(\Omega)\,\right\}.
\end{align*}
As usual, $\omega_n$ denotes the volume of the $n$-dimensional unit ball. In exactly the same way with a slightly simpler argument we obtain
\[
\int_{B(0,2)} (W'(\tilde u_\eps))^2 \d y \leq C(\bar\alpha,n).
\]
{\bf Step 3.} Now a direct calculation shows that 
\begin{align*}
\int_{B(0,2)}\left(\Delta\tilde u_\eps - W'(\tilde u_\eps)\right)^2\d y
	&= \int_{B(0,2)} (\eps^2\Delta u_\eps - W'(u_\eps))^2\,(x_\eps+\eps y)\d y\\
	&= c_0 \eps^{3-n}\,\alpha_\eps(B(x_\eps,2\eps)).
\end{align*}
Thus
\begin{align*}
||\,\Delta \tilde u_\eps\,||_{2,B(0,2)} &\leq ||\,\Delta \tilde u_\eps - W'(\tilde u_\eps)\,||_{2,B(0,2)} + ||\,W'(\tilde u_\eps)\,||_{2,B(0,2)}\\
	&\leq \sqrt{c_0\,\eps^{3-n}\,\alpha_\eps(\Omega)} + \sqrt{C(\bar\alpha,n)}.
\end{align*}
In total, we see that
\[
||\,\tilde u_\eps\,||_{2, B(0,2)} + ||\,\Delta\tilde u_\eps\,||_{2,B(0,2)} \leq C(\bar\alpha, n) 
\]
for all $0<\eps<1$ so small that $\alpha_\eps(\Omega)\leq \alpha(\Omega)+1$. Using the elliptic estimate from \cite[Theorem 9.11]{Gilbarg:2001vb}, we see that 
\[
||\tilde u_\eps||_{2,2, B(0,1)} \leq C(\bar\alpha, n),
\]
where we absorb the constant depending only on $n$ and the radii into the big constant. Using the Sobolev embeddings
\[
W^{2,2}(B(0,1)) \embeds W^{1,6}(B(0,1)) \embeds C^{0,1/2}(\overline{B(0,1)})
\]
we deduce that 
\[
|\tilde u_\eps|_{0,1/2, B(0,1)}\:\leq\:C(n,\bar \alpha),
\]
again absorbing the embedding constants into the constant. In particular, this shows that 
\[
||\tilde u_\eps||_{\infty, B(0,1)}\leq C(n,\bar \alpha).
\]
But since this holds for all sequences $x_\eps$, we can deduce that
\[
||\,u_\eps\,||_{\infty, \R^n}\,\leq\, C(n,\bar\alpha).
\]
Furthermore, for $x, z\in \R^n$ with $|x-z|<\eps<\eps_0$, we choose $x_\eps = x$ to deduce 
\[
|u_\eps(x) - u_\eps(y)| = |\tilde u_\eps(0) - \tilde u_\eps(\,(y-x)/\eps)| \leq C(n,\bar\alpha)\,\left|\,(y-x)/\eps\,\right|^\frac12 = \frac{C(n,\bar\alpha)}{\sqrt{\eps\,}}\,|x-y|^\frac12.
\]
\end{proof}

\begin{remark}
Without prescribing boundary conditions as in our modified space $X$, the result could still be salvaged on compactly contained subsets. Techniques for estimating quantities over $\{|u_\eps|>1\}$ in that case can be found in \cite[Proposition 3.5]{Roger:2006ta}, which we include below for the readers' convenience.
\end{remark}
\begin{prop}\cite[Proposition 3.5]{Roger:2006ta}\label{RS_prop_bdry}
For $n=2,3$, $\Omega \subseteq \R^n$, $\eps>0$, $u_\eps\in C^2(\Omega)$, $v_\eps \in C^0(\Omega)$, 
\[
-\eps \Delta u_\eps + \frac{1}{\eps} W'(u_\eps) = v_\eps \quad \text{in $\Omega$},
\]
and $\Omega'\cc \Omega$, $0 < r < \operatorname{dist}(\Omega', \partial\Omega)$, we have
\[
\int_{\{|u_\eps| \ge 1\}\cap \Omega'} W'(u_\eps)^2 \le C_k(1 + r^{-2k}\eps^{2k})\eps^2\int_\Omega v_\eps^2+ C_k r^{-2k}\eps^{2k} \int_{\{|u_\eps| \ge 1\}\cap \Omega} W'(u_\eps)^2
\] 
for all $k\in \N_0$.
\end{prop}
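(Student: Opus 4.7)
The plan is to argue by induction on $k$. The base case $k=0$ is immediate with $C_0 = 1$: the inequality reads
\[
\int_{\{|u_\eps|\ge 1\} \cap \Omega'} W'(u_\eps)^2 \le 2\eps^2 \int_\Omega v_\eps^2 + \int_{\{|u_\eps|\ge 1\}\cap \Omega} W'(u_\eps)^2,
\]
which is trivially true since the second term on the right already dominates the left.

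The induction step will rest on a single Caccioppoli-type one-step estimate: if $\Omega' \cc \tilde\Omega \cc \Omega$ with $\mathrm{dist}(\Omega', \tilde\Omega^c) \ge \rho$, then
\[
\int_{\{|u_\eps|\ge 1\}\cap \Omega'} W'(u_\eps)^2 \;\le\; C \eps^2 \int_\Omega v_\eps^2 \;+\; \frac{C\eps^2}{\rho^2} \int_{\{|u_\eps|\ge 1\}\cap \tilde\Omega} W'(u_\eps)^2.
\]
To establish this, I would test the equation with $\eta^2 W'(u_\eps)\,\psi_\delta(u_\eps)$, where $\eta \in C_c^2(\tilde\Omega)$ satisfies $\eta \equiv 1$ on $\Omega'$, $0\le\eta\le 1$, $|\nabla\eta|\le C/\rho$, and $\psi_\delta \in C^\infty(\R)$ is a phase-space cutoff with $\psi_\delta \equiv 0$ on $[-1,1]$, $\psi_\delta \equiv 1$ outside $[-1-\delta, 1+\delta]$, $0\le\psi_\delta\le 1$, designed so that $\psi_\delta'(u)\,W'(u) \ge 0$ pointwise on $\R$ (since on the small transition regions $\psi_\delta'$ and $W'$ both have the sign of $u$). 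Integration by parts produces three pieces: (i) the non-negative term $\eps \int \eta^2 (\psi_\delta'(u_\eps) W'(u_\eps) + \psi_\delta(u_\eps) W''(u_\eps)) |\nabla u_\eps|^2$, where non-negativity uses $W''(u) \ge 2$ on $\mathrm{supp}\,\psi_\delta$, (ii) a cross term $2\eps \int \eta \psi_\delta W'(u_\eps) \nabla\eta\cdot\nabla u_\eps$, and (iii) the main term $\eps^{-1}\int \eta^2 \psi_\delta W'(u_\eps)^2$. Young's inequality absorbs (ii) into (i) at the price of $\eps\int \psi_\delta W'(u_\eps)^2 |\nabla\eta|^2$, and Cauchy-Schwarz on the right-hand side $\int \eta^2 v_\eps W'(u_\eps) \psi_\delta$ (with balancing weights $\eps^{1/2}$ and $\eps^{-1/2}$) absorbs half of (iii). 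Letting $\delta\to 0$ by dominated convergence then yields the one-step estimate.

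To conclude, I iterate along a chain $\Omega' = \Omega_0 \cc \Omega_1 \cc \cdots \cc \Omega_k \subseteq \Omega$ with $\mathrm{dist}(\Omega_i, \Omega_{i+1}^c) \ge r/k$. Setting $a_i := \int_{\{|u_\eps|\ge 1\}\cap \Omega_i} W'(u_\eps)^2$ and $\lambda := Ck^2\eps^2/r^2$, the one-step bound becomes $a_i \le \eps^2 \int_\Omega v_\eps^2 + \lambda\, a_{i+1}$; iterating $k$ times yields $a_0 \le \eps^2 \int_\Omega v_\eps^2 \sum_{j=0}^{k-1}\lambda^j + \lambda^k a_k$. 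Since $r$ is bounded above by $\mathrm{diam}(\Omega)$, the geometric sum is controlled by $C_k(1+ r^{-2k}\eps^{2k})$, while $\lambda^k = (Ck^2)^k\eps^{2k}r^{-2k}$; together these give the advertised inequality.

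The principal obstacle is engineering the phase-space cutoff $\psi_\delta$ so that both the $W''$-term and the $\psi_\delta'W'$-term arising from integration by parts carry the correct sign simultaneously, and then absorbing the cross term quantitatively enough that each iteration contributes only a factor $\eps^2/\rho^2$ rather than an unscaled constant; once that is arranged, everything else is bookkeeping over the chain of nested domains.
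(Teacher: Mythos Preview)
The paper does not give its own proof of this proposition; it is quoted verbatim from \cite[Proposition 3.5]{Roger:2006ta} and included ``for the readers' convenience'' as a statement only. So there is nothing in the paper to compare your argument against.

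That said, your proof is correct and is essentially the argument in R\"oger--Sch\"atzle. The one-step Caccioppoli estimate via testing the equation against $\eta^2\,W'(u_\eps)\,\psi_\delta(u_\eps)$ is exactly the right move: the sign conditions $W''\ge 2$ on $\{|u|\ge 1\}$ and $\psi_\delta' W'\ge 0$ make the gradient term non-negative, and the $\eps$-weighted Young splits on the cross term and on the $v_\eps$-term produce the factor $\eps^2/\rho^2$ at each step. The iteration over the nested chain $\Omega_0\Subset\cdots\Subset\Omega_k$ with spacing $r/k$ then telescopes to the stated bound. One minor remark: your appeal to $r\le\diam(\Omega)$ is not actually needed to control the geometric sum; the elementary bound $\sum_{j=0}^{k-1}\lambda^j\le k\,(1+\lambda^k)$, valid for all $\lambda\ge 0$, already yields $C_k(1+r^{-2k}\eps^{2k})$ directly.
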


\mine{Thus $u_\eps$ is uniformly finite on compactly contained sets for small $\eps$ and steps 2 and 3 from the proof of Lemma \ref{regularity lemma} go through as before.} A useful rescaling property is the following observation from the proof of \cite[Theorem 5.1]{Roger:2006ta}.

\begin{lemma}\label{rescaling lemma}
Let $u_\eps:B(x,r)\to \R$, $\lambda>0$ and $\hat u_\eps: B(0,r/\lambda)\to \R$ with
\[
\hat u_\eps(y) = u_\eps(x+\lambda y).
\]
Set $\hat r:= r/\lambda$, $\hat \eps:= \eps/\lambda$, 
\[
\hat\mu_\eps:= \frac1{c_0}\: \left(\frac{\hat \eps}2\,|\nabla \hat u_\eps|^2 + \frac1{\hat\eps}\,W(\hat u_\eps)\right)\L^n,\qquad \hat \alpha_\eps:= \frac1{c_0\,\hat\eps}\,\left(\hat \eps\,\Delta \hat u_\eps - \frac1{\hat\eps}\,W'(\hat u_\eps)\right)\,\L^n.
\]
Then 
\[
\hat r^{1-n}\hat\mu_\eps(B(0,\hat r)) = r^{1-n}\,\mu_\eps(B(x,r)),\qquad \hat r^{3-n}\, \hat\alpha_\eps(B(0,\hat r)) = r^{3-n}\,\alpha_\eps(B(x,r)).
\]
The discrepancy measures $\xi_{\eps,\pm}$ behave like $\mu$ under rescaling. 
\end{lemma}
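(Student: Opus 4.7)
The statement is a scaling identity for the diffuse area measure $\mu_\eps$ and diffuse mean curvature energy measure $\alpha_\eps$ under affine rescaling of the independent variable, and the only tool needed is the change of variables formula. So my plan is to perform the substitution $z = x + \lambda y$ carefully and keep track of powers of $\lambda$.

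First, I would record the basic scaling relations: from $\hat u_\eps(y) = u_\eps(x + \lambda y)$ one differentiates to obtain $\nabla \hat u_\eps(y) = \lambda\,\nabla u_\eps(x+\lambda y)$ and $\Delta \hat u_\eps(y) = \lambda^2\,\Delta u_\eps(x+\lambda y)$. Combining these with $\hat\eps = \eps/\lambda$ immediately gives
\[
\frac{\hat\eps}{2}|\nabla \hat u_\eps|^2 + \frac{1}{\hat\eps}W(\hat u_\eps) \;=\; \lambda\left(\frac{\eps}{2}|\nabla u_\eps|^2 + \frac{1}{\eps}W(u_\eps)\right)(x+\lambda y)
\]
and
\[
\hat\eps\,\Delta\hat u_\eps - \frac{1}{\hat\eps}W'(\hat u_\eps) \;=\; \lambda\left(\eps\,\Delta u_\eps - \frac{1}{\eps}W'(u_\eps)\right)(x+\lambda y) = -\lambda\, v_\eps(x+\lambda y),
\]
so squaring and dividing by $\hat\eps$ produces an overall factor $\lambda^2/(\eps/\lambda) = \lambda^3/\eps$ times $v_\eps^2$.

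Next I would integrate over $B(0,\hat r)$, apply the substitution $z = x + \lambda y$ (with $dy = \lambda^{-n}\,dz$) to map the domain to $B(x,r)$, and combine with the density scalings above. This yields $\hat\mu_\eps(B(0,\hat r)) = \lambda^{1-n}\,\mu_\eps(B(x,r))$ and $\hat\alpha_\eps(B(0,\hat r)) = \lambda^{3-n}\,\alpha_\eps(B(x,r))$. Multiplying by $\hat r^{1-n} = \lambda^{n-1}\,r^{1-n}$, respectively $\hat r^{3-n} = \lambda^{n-3}\,r^{3-n}$, the powers of $\lambda$ cancel and the two claimed identities drop out. The same calculation applied to the integrands of $\xi_{\eps,\pm}$ shows that only the sign pattern $\tfrac{\hat\eps}{2}|\nabla\hat u_\eps|^2 - \tfrac{1}{\hat\eps}W(\hat u_\eps)$ changes relative to $\mu_\eps$, and since it also scales by a single factor of $\lambda$, the discrepancy measures rescale exactly like $\mu_\eps$.

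There is no real obstacle here — the content of the lemma is simply that the exponents $1-n$ on the radius for $\mu_\eps$ and $3-n$ for $\alpha_\eps$ are the unique ones that make the corresponding diffuse densities invariant under the natural parabolic rescaling $(y,\hat\eps) \mapsto (x+\lambda y,\eps)$. The only minor care needed is to match the powers correctly: the density of $\mu_\eps$ picks up one factor of $\lambda$ from the rescaling plus $\lambda^{-n}$ from the Jacobian, while for $\alpha_\eps$ the squared factor $\lambda^2$ combined with the extra $1/\hat\eps = \lambda/\eps$ gives $\lambda^3$, minus $\lambda^n$ from the Jacobian.
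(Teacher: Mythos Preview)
Your proposal is correct and follows exactly the approach the paper indicates: the paper's own proof says only ``This can be seen by a simple calculation similar to the one in the proof of Lemma \ref{regularity lemma},'' and you have carried out precisely that change-of-variables computation. Note in passing that the definition of $\hat\alpha_\eps$ in the statement is missing a square on the integrand; you correctly worked with the squared density $\hat v_\eps^2/(c_0\hat\eps)$ consistent with the definition of $\alpha_\eps$ elsewhere in the paper.
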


\begin{proof} 
This can be seen by a simple calculation similar to the one in the proof of Lemma \ref{regularity lemma}.
\end{proof}

For the reader's convenience we include the following classical monotonicity result.

\begin{lemma}\cite[Lemma 4.2]{Roger:2006ta}\label{lemma monotonicity formula}
For $x\in\R^n$ we have
\begin{align*}
\frac{d}{d\rho}\left(\rho^{1-n}\,\mu_\eps(B(x,\rho))\right) &= - \frac{\xi_\eps(B(x,\rho))}{\rho^n} + \frac1{c_0\,\rho^{n+1}}\int_{\partial B(x,\rho)} \eps\,\langle y-x,\nabla u_\eps\rangle^2\d \H^{n-1}(y)\\
	&\qquad\qquad + \frac1{c_0\,\rho^n}\int_{B(x,\rho)}v_\eps\,\langle y-x, \nabla u_\eps\rangle \d y.
\end{align*}
\end{lemma}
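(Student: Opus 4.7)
The plan is to derive the formula by a direct computation, essentially following the template of classical monotonicity formulas for stationary varifolds (cf.\ Allard, Simon). First I would differentiate
\[
\mu_\eps(B(x,\rho)) = \frac{1}{c_0}\int_{B(x,\rho)} e_\eps(u_\eps)\d y,\qquad e_\eps(u_\eps):= \frac\eps2|\nabla u_\eps|^2 + \frac1\eps W(u_\eps),
\]
with respect to $\rho$, producing a surface integral of $e_\eps(u_\eps)$ over $\partial B(x,\rho)$, and then apply the product rule to $\rho^{1-n}\mu_\eps(B(x,\rho))$ to isolate a factor $-(n-1)\rho^{-n}\mu_\eps(B(x,\rho))$ that will ultimately be absorbed by the bulk term coming from the divergence theorem.

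The key identity is obtained by applying the divergence theorem to the radial vector field $Y(y) = (y-x)\,e_\eps(u_\eps)$ on $B(x,\rho)$. Since $\div(y-x) = n$, this yields
\[
\rho \int_{\partial B(x,\rho)} e_\eps(u_\eps)\d\H^{n-1} = n\int_{B(x,\rho)} e_\eps(u_\eps)\d y + \int_{B(x,\rho)} \langle y-x,\nabla e_\eps(u_\eps)\rangle\d y,
\]
and the latter integrand equals $\eps\,\langle (\nabla^2 u_\eps)(y-x),\nabla u_\eps\rangle + \frac1\eps W'(u_\eps)\langle y-x,\nabla u_\eps\rangle$. The second key step is to rewrite the Hessian term using $\langle (\nabla^2 u_\eps)(y-x),\nabla u_\eps\rangle = \div\bigl(\langle y-x,\nabla u_\eps\rangle\,\nabla u_\eps\bigr) - \langle y-x,\nabla u_\eps\rangle\,\Delta u_\eps - |\nabla u_\eps|^2$. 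Integrating the divergence piece by parts gives a boundary term which, since $\nu = (y-x)/\rho$ on $\partial B(x,\rho)$, produces the crucial $\rho^{-1}\int_{\partial B(x,\rho)}\eps\,\langle y-x,\nabla u_\eps\rangle^2\d\H^{n-1}$. Assembling the $-\eps\Delta u_\eps + \frac1\eps W'(u_\eps) = v_\eps$ contribution into a single bulk term gives the expected $v_\eps$-integral.

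The final step is purely algebraic: combining everything, the remaining bulk term is $-\rho^{-1}\int_{B(x,\rho)}\eps|\nabla u_\eps|^2\d y$, which I would split using $\eps|\nabla u_\eps|^2 = e_\eps(u_\eps) + \bigl(\tfrac\eps2|\nabla u_\eps|^2 - \tfrac1\eps W(u_\eps)\bigr)$ so that one half cancels the $n$-factor against the $-(n-1)$-factor coming from the product rule, leaving precisely $-\rho^{-n}\xi_\eps(B(x,\rho))$. Dividing by $c_0$ throughout and collecting terms yields the stated identity.

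I expect no serious obstacle: the computation is standard once one commits to the divergence-theorem/product-rule strategy, and the regularity required is comfortably available since $u_\eps\in W^{2,2}_{loc}$ (with $u_\eps\in C^{0,1/2}_{\mathrm{loc}}$ by Lemma \ref{regularity lemma}). Some care is needed to justify the $\rho$-differentiation under the integral sign and the integration by parts on a.e.\ ball, but this is handled by a coarea/Fubini argument so that the identity holds for a.e.\ $\rho>0$; both sides being continuous in $\rho$ (up to the jump set of the measures, which has measure zero on $(0,\infty)$) then extends the formula.
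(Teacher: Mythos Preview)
The paper does not actually prove this lemma; it merely quotes it from \cite[Lemma 4.2]{Roger:2006ta} ``for the reader's convenience'' and moves on. Your computation is correct and is exactly the standard derivation used there: differentiate $\rho\mapsto\mu_\eps(B(x,\rho))$ via coarea, apply the divergence theorem to $(y-x)\,e_\eps(u_\eps)$, rewrite the Hessian contribution as $\div\bigl(\langle y-x,\nabla u_\eps\rangle\,\nabla u_\eps\bigr)-\langle y-x,\nabla u_\eps\rangle\,\Delta u_\eps-|\nabla u_\eps|^2$, and then split $\eps|\nabla u_\eps|^2 = e_\eps + c_0\,\xi_\eps$-density so that the $(n-1)$-factor from the product rule cancels.
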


In low dimensions $n=2,3$, the second and third term in the monotonicity formula can easily be estimated after integration. While the result is known, we fixed \mine{some} details in the proof of \cite[Proposition 4.5]{Roger:2006ta}, so we include it here for completeness.

\begin{lemma} \cite[Proposition 4.5]{Roger:2006ta}\label{lemma estimated monotonicity formula}
Let $0<r<R<\infty$ if $n=3$ and $0<r<R\leq 1$ if $n=2$, then
\begin{align}\label{eq monotonicity formula}\nonumber
r^{1-n}\mu_\eps(B(x,r)) &\leq 3\,R^{1-n}\mu_\eps(B(x,R)) + 2\,\int_{r}^R\frac{\xi_{\eps,+}(B(x,\rho))}{\rho^n}\d \rho  \\ 
	&\nonumber \quad + \frac{1}{2\,(n-1)^2}\alpha_\eps(B(x,R)) + \frac{r^{3-n}}{(n-1)^2}\,\alpha_\eps(B(x,r)) \\
	&\quad + \frac{R_0^2\,R^{1-n}}{(n-1)^2}\,\alpha_\eps(B(x,R))
\end{align}
where $R_0:= \min\{R,R_\Omega\}$ and $R_\Omega$ is a radius such that $\Omega\subset B(0,R_\Omega/2)$
\end{lemma}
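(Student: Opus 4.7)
The plan is to integrate the pointwise monotonicity identity from Lemma~\ref{lemma monotonicity formula} over the radial interval $[r,R]$. After dropping the non-negative radial boundary term on the right hand side and replacing $\xi_\eps$ by $\xi_{\eps,+}$, integration yields
\[
 r^{1-n}\mu_\eps(B(x,r))\le R^{1-n}\mu_\eps(B(x,R))+\int_r^R\frac{\xi_{\eps,+}(B(x,\rho))}{\rho^n}\,\d\rho+\left|\int_r^R\frac{1}{c_0\rho^n}\int_{B(x,\rho)}v_\eps\langle y-x,\nabla u_\eps\rangle\,\d y\,\d\rho\right|,
\]
so the whole task is to bound the mixed double integral on the right by the stated three $\alpha_\eps$-contributions plus pieces that can be absorbed into the diffuse area measure.

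To that end I would apply Fubini to the mixed term. For fixed $y\in B(x,R)$ the $\rho$-interval is $[\max(r,|y-x|),R]$ and $\int_a^R\rho^{-n}\,\d\rho=(a^{1-n}-R^{1-n})/(n-1)$, so the double integral decomposes into a contribution over $B(x,r)$ with weight $r^{1-n}$, one over the annulus $B(x,R)\setminus B(x,r)$ with weight $|y-x|^{1-n}$, and a cancelling global piece with weight $R^{1-n}$. Passing to absolute values via $|v_\eps\langle y-x,\nabla u_\eps\rangle|\le|v_\eps||y-x||\nabla u_\eps|$ reduces each of the three terms to a positive scalar integral.

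On each of the three pieces I apply the pointwise Young inequality
\[
|v_\eps||y-x||\nabla u_\eps|\le \frac{a\,|y-x|^2\,v_\eps^2}{2\eps}+\frac{\eps|\nabla u_\eps|^2}{2a},
\]
with a parameter $a>0$ tuned piece by piece. On $B(x,r)$ I bound $|y-x|\le r$; on the global $B(x,R)$ piece I bound $|y-x|\le R_0$, which is legitimate because $v_\eps$ and $\nabla u_\eps$ both vanish outside $\Omega\subset B(0,R_\Omega/2)$; on the annulus I use the weight $|y-x|^{1-n}$ together with a weighted variant of the same Young inequality. Choosing the parameters so that the $\alpha_\eps$-contributions reproduce exactly the constants $\frac{r^{3-n}}{(n-1)^2}$, $\frac{1}{2(n-1)^2}$ and $\frac{R_0^2 R^{1-n}}{(n-1)^2}$ of the statement, the corresponding $\mu_\eps$-contributions must then sum to exactly $\tfrac12\,r^{1-n}\mu_\eps(B(x,r))+R^{1-n}\mu_\eps(B(x,R))$.

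Absorbing the $\tfrac12\,r^{1-n}\mu_\eps(B(x,r))$ contribution into the left hand side and multiplying through by $2$ produces the coefficient $3$ in front of $R^{1-n}\mu_\eps(B(x,R))$, the coefficient $2$ in front of $\int_r^R \xi_{\eps,+}(B(x,\rho))/\rho^n\,\d\rho$, and matches the three $\alpha_\eps$-terms with the stated constants. The hardest step I anticipate is the annular piece, whose weight $|y-x|^{1-n}$ fails to be locally integrable near $y=x$: the bound must exploit the separation $|y-x|\ge r$ on the annulus together with a careful choice of Young parameter depending on $R_0$. This is precisely where the constraint $R\le 1$ becomes essential in two dimensions and where the use of $R_0$ in place of $R$ in the $\alpha_\eps(B(x,R))$ coefficient becomes indispensable in three dimensions; without either, the prefactor of the annular contribution would blow up.
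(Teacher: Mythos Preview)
Your overall strategy---integrate the monotonicity identity, Fubini the mixed term into three pieces, apply Young's inequality---is the same as the paper's.  However, you make one move at the very start that breaks the argument: you \emph{drop} the non-negative radial boundary term
\[
\frac{1}{c_0}\int_{B(x,R)\setminus B(x,r)}\frac{\eps\,\langle y-x,\nabla u_\eps\rangle^2}{|y-x|^{n+1}}\,\d y.
\]
This term is not merely a convenient bonus to discard; it is exactly what absorbs the $\nabla u_\eps$--contribution coming from Young's inequality on the annular piece.  Concretely, after Fubini the annular mixed term carries the weight $|y-x|^{1-n}$, and Young produces a negative contribution of the form $-\eps\,\langle y-x,\nabla u_\eps\rangle^2/|y-x|^{2(n-1)}$.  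The paper cancels this against the boundary term, using that $|y|^{n+1}\le |y|^{2(n-1)}$ holds for all $|y|$ when $n=3$ and for $|y|\le 1$ when $n=2$.  \emph{That} is where the restriction $R\le 1$ in two dimensions actually enters---not, as you suggest, through the non-integrability of $|y-x|^{1-n}$ near the origin combined with the cutoff $|y-x|\ge r$.

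If you persist in dropping the boundary term and try to bound the annular $\nabla u_\eps$--contribution directly, you end up (in three dimensions) with something like $\int_{B_R\setminus B_r}\eps|\nabla u_\eps|^2/|y-x|^2\,\d y$, which cannot be controlled by any fixed combination of $r^{1-n}\mu_\eps(B_r)$ and $R^{1-n}\mu_\eps(B_R)$ with absorbable coefficients.  Your aspirational claim that the $\mu_\eps$--contributions ``must then sum to exactly $\tfrac12 r^{1-n}\mu_\eps(B_r)+R^{1-n}\mu_\eps(B_R)$'' does not survive this obstruction.  The fix is simple: keep the boundary term, choose the Young parameter on the annulus so the two radial-derivative integrands match up to the power comparison above, and then only the $B_r$ and $B_R$ pieces produce genuine $\mu_\eps$--contributions (each with coefficient $\lambda$, set to $1/2$ at the end).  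Your treatment of the $B_r$ and $B_R$ pieces, including the use of $R_0$ via the support of $v_\eps$ and $\nabla u_\eps$, is correct.
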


\begin{proof}
Without loss of generality we may assume that $x=0$ and write $B_\rho:= B(0,\rho)$, $f(\rho)= \,\rho^{1-n}\mu_\eps(B_\rho)$. Observe that for any function $g:B_R\to\R$ we have 
\begin{align*}
\int_r^R\rho^{-n}\,\int_{B_\rho} g(x)\d x\d \rho 
	&= \int_{B_R}g(x)\,\int_{\max\{|x|, r\}}^R \rho^{-n}\d \rho\d x\\
	&= \frac1{n-1}\int_{B_R}g(x) \,\left(\frac1{\max\{|x|,r\}^{n-1}} - \frac1{R^{n-1}}\right)\d x
\end{align*}
and
\begin{align*}
\int_r^R\rho^{-(n+1)}\int_{\partial B_\rho} g(x)\d \H^{n-1}\d \rho
	&= \int_{B_R\setminus B_r} \frac{g(x)}{|x|^{n+1}}\d x.
\end{align*}
Using this to integrate the derivative we obtain using Young's inequality
\begin{align*}
f(R)&-f(r) = \int_r^Rf'(\rho)\d \rho\\
	&= \int_r^R \frac{-\,\xi_\eps(B_\rho)}{\rho^{n}}\d \rho + \frac1{c_0}\int_{B_R\setminus B_r} \frac{\eps\,\langle \nabla u_\eps,y\rangle^2}{|y|^{n+1}} + \frac{1}{n-1}\, \frac{v_\eps\,\langle y,\nabla u_\eps\rangle}{|y|^{n-1}}\d y\\
		&\qquad +  \frac1{(n-1)\,c_0\,r^{n-1}} \int_{B_r}v_\eps\,\langle y,\nabla u_\eps\rangle \d y -  \frac1{(n-1)\,c_0\,R^{n-1}} \int_{B_R}v_\eps\,\langle y,\nabla u_\eps\rangle  \d y\\
	&\geq \int_r^R\frac{-\,\xi_{\eps,+}(B_\rho)}{\rho^n}\d \rho \\
		&\qquad + \frac1{c_0}\int_{B_R\setminus B_r}\frac{\eps\,\langle \nabla u_\eps,y\rangle^2}{|y|^{n+1}} - \frac1{n-1}\left((n-1)\,\eps\,\frac{\langle y,\nabla u_\eps\rangle^2}{|y|^{2(n-1)}} + \frac{1}{4\,(n-1)\,\eps}\,v_\eps^2\right)\d y\\
		&\qquad - \frac{1}{c_0\,r^{n-1}}\int_{B_r}\lambda\,\frac{\eps\,\langle y,\nabla u_\eps\rangle^2}{2\,|y|^2} + \frac1{2\lambda}\frac{\,|y|^2\,v_\eps^2}{(n-1)^2\,\eps}\d y \\
		&\qquad - \frac{1}{c_0\,R^{n-1}}\int_{B_R}\lambda\,\frac{\eps\,\langle y,\nabla u_\eps\rangle^2}{2\,|y|^2} + \frac1{2\lambda} \frac{\,|y|^2\,v_\eps^2}{(n-1)^2\,\eps}\d y\\
	&\geq \int_r^R\frac{-\,\xi_{\eps,+}(B_\rho)}{\rho^n}\d \rho - \frac1{4\,(n-1)^2}\int_{B_R\setminus B_r}\frac1\eps\,v_\eps^2\d y\\
	&\qquad- \lambda\,f(r) - \frac1{2\lambda}\,\frac{r^2}{(n-1)^2\:r^{n-1}}\,\int_{B_r} \frac{v_\eps^2}{\eps}\d y\\
&\qquad - \lambda\,f(R) - \frac1{2\lambda}\,\frac{R_0^2}{(n-1)^2\,R^{n-1}}\int_{B_R}\frac{v_\eps^2}{\eps}\d y.
\end{align*}
where $\lambda\in(0,1)$. Here we used that $n=2,3$ to obtain that $2(n-1)\leq n+1$, so that $|y|^{n+1} \leq |y|^{2(n-1)}$ for all $|y|$ if $n=3$ and for $|y|\leq 1$ if $n=2$. When we bring all the relevant terms to the other side, this shows that
\begin{align*}
(1+\lambda)\,f(R) - (1-\lambda)\,f(r) &\geq -\int_r^R\frac{ \xi_{\eps,+}(B_\rho)}{\rho^n}\d \rho - \frac{1}{4\,(n-1)^2}\,\alpha_\eps(B_R\setminus B_r)\\
	&\qquad - \frac{r^{3-n}}{2\,\lambda\,(n-1)^2}\,\alpha_\eps(B_r) - \frac{R_0^2}{2\lambda\,(n-1)^2\,R^{n-1}}\,\alpha_\eps(B_R).
\end{align*}
Setting $\lambda= 1/2$ and multiplying by two proves the Lemma.
 \end{proof}
 
\begin{remark}\label{remark li-yau}
If $n=3$, we may let $R\to\infty$ and subsequently $\eps\to 0$, $r\to 0$ and finally $\lambda\to 0$ that we have
\[
\limsup_{r\to 0} r^{1-n}\mu(B(x,r)) \leq \frac1{4\,(n-1)^2}\,\alpha(\overline\Omega)
\]
at every point $x\in\R^3$ such that $\alpha(\{x\}) = 0$ (i.e.\  when $\lim_{r\to 0}\alpha(B_r)=0$). Using the results of \cite{Roger:2006ta}, $\mu$ is an integral varifold, so this yields a Li-Yau-type \cite{li1982new} inequality
\begin{equation}\label{Li-Yau}
\theta^*(\mu,x) = \limsup_{r\to 0}\frac{\mu(B(x,r))}{\pi\,r^2} \leq \frac1{16\,\pi}\alpha(\overline\Omega).
\end{equation}
This inequality is usually found with a $4$ in place of the $16$ which stems from a different normalisation of the mean curvature and $\W(\mu)$ in the place of $\alpha$, see also the proof of \cite[Lemma 1]{MR1650335} or \cite[Proposition 2.1.1]{kuwert2012willmore}.

In $n=2$ dimensions, \stephan{we had to assume $R\leq 1$}. Indeed, an inequality of this type cannot hold since circles with large enough radii have arbitrarily small elastic energy. Still, setting $R=1$, a similar bound on the multiplicity in terms of $\bar\alpha$ and $S$ can be obtained.
\end{remark}

\stephan{The version of the monotonicity formula \eqref{eq monotonicity formula} which we will use} is the simplified expression
\begin{equation}\label{our monoticity}
r^{1-n}\mu_\eps(B(x,r))\leq 3\,R^{1-n}\mu_\eps(B(x,R)) + 3\,\alpha_\eps(B(x,R)) + 2\,\int_r^R\frac{\xi_{\eps,+}(B(x,\rho))}{\rho^n}\,d\rho.
\end{equation}
This holds generally if $n=3$, and when $R\leq 1$ if $n=2$. Furthermore, we have the following estimate for the positive part of the discrepancy measures. \mine{It is a precise quantitative refinement of the classic statement that smooth solutions of the stationary Allen-Cahn equation $-\Delta u + W'(u) = 0$ on $\R^n$ satisfy $|\nabla u|^2 \leq 2\,W(u)$ \cite{MR803255}.}

\begin{lemma}\cite[Lemma 3.1]{Roger:2006ta}\label{lemma RS estimate}
Let $n=2,3$. Then there are $\delta_0>0, M\in\N$ such that for all $0<\delta\leq \delta_0$, $0<\eps\leq \rho$ and 
\[
\rho_0:= \max\{2, 1+ \delta^{-M}\eps\}\,\rho
\]
we have
\begin{align*}
\rho^{1-n}\xi_{\eps,+}(B(x,\rho))&\leq C\,\delta\,\rho^{1-n}\,\mu_\eps(B(x,2\rho)) + C\,\delta^{-M}\eps^2\,\rho^{1-n}\int_{B(x,\rho_0)}\frac1\eps\,v_\eps^2\d x\\
	&\qquad+ C\,\delta^{-M}\eps^2\,\rho^{1-n}\int_{B(x,\rho_0)\cap\{|u_\eps|>1\}}\frac1{\eps^3}W'(u_\eps)^2\d x + \frac{C\,\eps\,\delta}\rho.
\end{align*}
\end{lemma}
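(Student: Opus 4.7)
The plan is to treat this as a quantitative, perturbed form of Modica's classical gradient inequality \cite{MR803255}: any entire bounded solution of the stationary Allen--Cahn equation $-\Delta u + W'(u) = 0$ on $\R^n$ satisfies $\tfrac12|\nabla u|^2 \le W(u)$, so its discrepancy density $e := \tfrac\eps2|\nabla u|^2 - \tfrac1\eps W(u)$ is non-positive. Here $u_\eps$ only solves the inhomogeneous equation $-\eps\Delta u_\eps + \eps\inv W'(u_\eps) = v_\eps$ on a bounded domain and may take values outside $[-1,1]$. The two sources of possible positivity of $e_\eps$ are therefore the forcing $v_\eps$ and the contribution of the set $\{|u_\eps| > 1\}$, which is exactly what the three error terms on the right-hand side of the lemma measure.

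The core PDE computation is the identity
\[
\Delta e_\eps \;=\; \eps|\nabla^2 u_\eps|^2 \;-\; \frac{1}{\eps^3}(W'(u_\eps))^2 \;-\; \nabla u_\eps\cdot\nabla v_\eps \;+\; \frac{1}{\eps^2}\,W'(u_\eps)\,v_\eps,
\]
obtained by direct differentiation and substitution of $\Delta u_\eps = \eps^{-2}W'(u_\eps) - \eps\inv v_\eps$. I would then test this identity against $\eta^2\,\chi_{\{e_\eps>0\}}$ (or a smooth mollification thereof), where $\eta$ is a Lipschitz bump supported in $B(x,\rho_0)$ and equal to $1$ on $B(x,\rho)$, and integrate by parts. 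A Young inequality with parameter $\delta$ balances the gradient contributions against the total energy density $\mu_\eps$ on $B(x,2\rho)$, yielding the main term $C\delta\,\rho^{1-n}\mu_\eps(B(x,2\rho))$ together with the stated error terms in $v_\eps$ and in $W'(u_\eps)\chi_{\{|u_\eps|>1\}}$.

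The factor $\delta^{-M}$ and the enlarged radius $\rho_0 = \max\{2,\,1+\delta^{-M}\eps\}\rho$ arise because one cannot control $e_{\eps,+}$ on $B(x,\rho)$ from its values on $B(x,2\rho)$ in a single step: the boundary term from the integration by parts reintroduces $e_{\eps,+}$ on a slightly larger annulus, so the basic estimate must be iterated on a telescoping sequence of balls. Each iteration trades a factor of $\delta$ for a factor involving $\eps/\rho$ (essentially the Lipschitz constant of the cutoff), and the resulting geometric series converges once $\rho_0$ is wide enough to absorb the accumulated inflation, with $M$ depending only on $n$. The surviving boundary contribution then produces the final $C\eps\delta/\rho$ term.

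The main obstacle is that Modica's original argument uses crucially $|u|\le 1$, which cannot be assumed here. My plan is to split the testing integrals into $\{|u_\eps|\le 1\}$ and its complement: on the former, the strict convexity structure of $W$ is usable and the testing goes through as sketched, while on the latter $W''(u_\eps) \ge 2$ combined with an integration-by-parts identity of the kind used in Step~1 of Lemma \ref{regularity lemma} bounds the contribution directly by $\int_{B(x,\rho_0)\cap\{|u_\eps|>1\}}\eps^{-3}(W'(u_\eps))^2 \dx$, which is exactly the third error term in the statement. Since this is a classical result of R\"oger and Sch\"atzle, the fine details of the iteration would follow \cite[Lemma 3.1]{Roger:2006ta}.
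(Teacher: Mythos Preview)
The paper does not give its own proof of this lemma; it is stated with a direct citation to \cite[Lemma~3.1]{Roger:2006ta} and no argument is supplied. So there is nothing in the paper to compare your proposal against, and your closing remark that ``the fine details of the iteration would follow \cite[Lemma~3.1]{Roger:2006ta}'' is in fact exactly what the paper does: it defers entirely to that reference.

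That said, your sketch is broadly in the right spirit but does not quite match the mechanism in R\"oger--Sch\"atzle. Their Lemma~3.1 does not test the Bochner-type identity for $\Delta e_\eps$ against a truncated cutoff as you propose. Instead, they work with the function $\psi_\eps := \sqrt{2W(u_\eps)+\eps^{4}} - \eps|\nabla u_\eps|$ (a regularised version of the signed square root of the discrepancy), derive an elliptic differential inequality for $\psi_\eps$ using the Kato inequality $|\nabla|\nabla u_\eps||^2 \le |\nabla^2 u_\eps|^2$, and then run an iterated Caccioppoli/hole-filling argument on nested balls. The telescoping-ball iteration you describe is indeed present, and it is what produces the $\delta^{-M}$ factor and the enlarged radius $\rho_0$; but the object being iterated is an $L^2$ bound on $(\psi_\eps)_-$ rather than a bound on $e_{\eps,+}$ directly. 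Your scheme of testing $\Delta e_\eps$ against $\eta^2\chi_{\{e_\eps>0\}}$ would require controlling the term $\eps|\nabla^2 u_\eps|^2 - \eps^{-3}(W')^2$ from below on the positivity set, which is not obviously sign-definite without the Kato reduction; this is the gap that the R\"oger--Sch\"atzle substitution closes. If you intend to reproduce the proof rather than cite it, you should follow their auxiliary-function route.
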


The following observation is the key ingredient in order to obtain our required convergence results. \mine{Geometrically, it expresses that phase fields $u_\eps$ which stay away from $\pm 1$ at a fixed point need to have a minimal positive measure $\alpha$ or $\mu$. In three dimensions, this is only proved under additional technical assumptions. The proof uses infinitesimal H\"older continuity and the monotonicity formula to transition from the $\eps$-length scale to the macroscopic scale. 
}

In a slight abuse of notation, we will denote the functionals defined by the same formulas by $\W_\eps, S_\eps$ again, although they are given on spaces over $B_1:=B(0,1)$ \stephan{instead of $\Omega$.}

\begin{lemma}\label{minimisation lemma}
Let $n=2,3$, $\theta\in (0,1)$, $0<\eta<1/2$. Consider the subsets 
\[
Y^2:= \{u\in W^{2,2}(B_1)\::\:|u(0)|\leq \theta \}
\]
in $n=2$ dimensions and 
\[
Y^3_\eps:= \left\{u\in W^{2,2}(B_1):\:|u(0)|\leq \theta\text{ and }\alpha_\eps(B_{\sqrt\eps}) + \int_{B_{\sqrt\eps}\cap\{|u_\eps|>1\}}\frac{W'(u)^2}{\eps^3}\d x \leq \eps^\eta \right \}.
\]
Define $\F_\eps:W^{2,2}(B_1)\to [0,\infty)$ as
\[
\F_\eps(u) = \W_\eps(u) + S_\eps(u).
\]
Then $\theta_0:= \liminf_{\eps\to 0} \inf_{u\in Y^2} \F_\eps(u) > 0$ if $n=2$ and
\[
\theta_0:= \liminf_{\eps\to 0} \inf_{u\in Y_\eps^3} \F_\eps(u) > 0
\]
if $n=3$. The same works if instead $u(0)\geq 1/\theta$.
\end{lemma}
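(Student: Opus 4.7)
\emph{Strategy.} We argue by contradiction. Suppose there is a sequence $u_\eps$ in $Y^2$ (resp.\ $Y^3_\eps$) with $\F_\eps(u_\eps)\to 0$; in particular $\mu_\eps(B_1), \alpha_\eps(B_1)\to 0$. Since $\W_\eps(u_\eps)$ is uniformly bounded, Lemma~\ref{regularity lemma} gives $1/2$-H\"older continuity of $u_\eps$ on $\eps$-balls with a universal constant. Combined with $|u_\eps(0)|\leq \theta$, this yields a radius $s=s(\theta)>0$ on which $|u_\eps|\leq (1+\theta)/2 < 1$, so $W(u_\eps)\geq c_1 := W((1+\theta)/2)>0$ there. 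Consequently the rescaled density $g_\mu(\rho) := \rho^{1-n}\mu_\eps(B_\rho)$ satisfies $g_\mu(s\eps)\geq c_1\,\omega_n\,s/c_0 =: c_3 >0$; the symmetric case $u_\eps(0)\geq 1/\theta$ is handled identically, placing $u_\eps$ above $1$ on the small ball.

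We next propagate this microscopic mass to a fixed macroscopic scale $R$ (say $R=1/2$) using the monotonicity formula \eqref{our monoticity}:
\[
c_3 \leq g_\mu(s\eps) \leq 3R^{1-n}\mu_\eps(B_R) + 3\alpha_\eps(B_R) + 2\int_{s\eps}^R \rho^{-n}\xi_{\eps,+}(B_\rho)\,d\rho.
\]
The first two terms vanish as $\eps\to 0$, so the contradiction reduces to controlling the discrepancy integral. Apply Lemma~\ref{lemma RS estimate} with $\delta>0$ fixed and small; for $\eps$ small enough, $\rho_0\leq 2\rho$. The three resulting contributions are: (i) $C\delta\int \rho^{-1}g_\mu(2\rho)\,d\rho$; (ii) $C\eps\delta\int \rho^{-2}d\rho \leq C\delta/s$; and (iii) the $\eps^2\delta^{-M}$ terms involving $\alpha_\eps(B_{\rho_0})$ and $I(B):=\int_{B\cap\{|u|>1\}}W'(u)^2/\eps^3\,dx$. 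Note that the integration-by-parts argument from the proof of Lemma~\ref{regularity lemma} gives $I(\Omega)\leq c_0\,\alpha_\eps(\Omega)\to 0$ globally.

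Contribution (i) is bounded by $C\delta\log(R/s\eps)\sup_{[s\eps,R]}g_\mu$; we control this supremum by iterating monotonicity on dyadic annuli $[2^k s\eps, 2^{k+1}s\eps]$, on each of which the discrepancy logarithm equals $\log 2$, and choose $\delta$ so small that the resulting multiplicative constant is strictly less than one. A Gronwall-type absorption then gives $\sup_{[s\eps,R]}g_\mu \leq C[g_\mu(R) + \alpha_\eps(B_R) + \text{errors}]\to 0$. Contribution (ii) is arbitrarily small with $\delta$. Contribution (iii) is the most delicate: in $n=2$ the favourable prefactor $\eps^2(s\eps)^{-1}=\eps/s$ already forces it to vanish, but in $n=3$ the naive scaling yields only $O(1/s^2)$ times $\alpha_\eps+I$ at unit scale, which fails for the non-controlled $I$. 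We therefore split the integral at $\sqrt\eps/2$: on $[s\eps,\sqrt\eps/2]$ we use the $Y^3_\eps$-hypothesis $\alpha_\eps(B_{\sqrt\eps}) + I(B_{\sqrt\eps})\leq \eps^\eta$ for an $O(\eps^\eta)$ contribution, while on $[\sqrt\eps/2,R]$ the improved prefactor $\eps^2(\sqrt\eps)^{-2}=\eps$ combines with the global bounds to produce a vanishing contribution. This closes the contradiction.

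The principal obstacle is contribution (iii) in dimension three: the error terms in Lemma~\ref{lemma RS estimate} scale badly near the microscopic scale, and the localised quantity $I(B_{\sqrt\eps})$ is not controlled by $\W_\eps$ globally (the integration-by-parts argument from Lemma~\ref{regularity lemma} depends on the boundary condition on $\partial\Omega$ and fails on interior balls). The polynomial smallness built into the definition of $Y^3_\eps$ is exactly the ingredient needed to bridge the microscopic scale $\eps$ and the mesoscopic scale $\sqrt\eps$; without it the argument breaks down in three ambient dimensions.
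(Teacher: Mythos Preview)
Your overall strategy mirrors the paper's proof: H\"older continuity forces $g_\mu(s\eps)\geq c_3>0$, then the monotonicity formula together with the R\"oger--Sch\"atzle discrepancy estimate propagates this to scale $R$, and in three dimensions the range $[\eps,\sqrt\eps]$ is handled separately using the $Y^3_\eps$-hypothesis. However, there is a genuine gap in your treatment of contribution (i).

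You apply Lemma~\ref{lemma RS estimate} with $\delta>0$ \emph{fixed}. Contribution (i) is then $C\delta\int_{s\eps}^R \rho^{-1}g_\mu(2\rho)\,d\rho$, which you bound by $C\delta\log(R/s\eps)\cdot\sup g_\mu$. Since $\log(R/s\eps)\sim|\log\eps|\to\infty$, this term cannot be made small unless $\sup g_\mu\to 0$ faster than $1/|\log\eps|$---but that is precisely what you are trying to contradict. Your proposed fix is a dyadic iteration of \eqref{our monoticity}, choosing $\delta$ so small that ``the resulting multiplicative constant is strictly less than one''. This cannot work: the simplified monotonicity formula \eqref{our monoticity} already carries a factor $3$ in front of $g_\mu(R)$, independent of $\delta$. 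Iterating over $N\sim|\log\eps|$ dyadic steps produces a factor $3^N$, and even the finer version from Lemma~\ref{lemma estimated monotonicity formula} has a multiplicative factor $(1+\lambda)/(1-\lambda)>1$. Equivalently, Gr\"onwall applied to $g_\mu(r)\leq A+C\delta\int_r^{2R}\rho^{-1}g_\mu(\rho)\,d\rho$ yields $g_\mu(r)\leq A(2R/r)^{C\delta}\sim A\,\eps^{-C\delta}$, which is unbounded for any fixed $\delta>0$.

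The paper resolves this by taking $\delta=\delta(\eps)=\log(\eps)^{-2}$. Then the Gr\"onwall exponent becomes $C\delta|\log\eps|=C/|\log\eps|\to 0$, giving a uniform bound $\sup_{[\sqrt\eps,1)}g_\mu\leq C$ (Step~1), and the same device on $[\eps,\sqrt\eps]$ using the $Y^3_\eps$-bound (Step~2). The price is that $\delta^{-M}=|\log\eps|^{2M}$ now appears in contribution (iii), but this grows only logarithmically and is still dominated by the polynomial factors $\eps^2$ and $\eps^\eta$. This $\eps$-dependent choice of $\delta$ is the essential technical point you are missing; with it your outline becomes correct and matches the paper's argument. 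A minor additional point: Lemma~\ref{regularity lemma} as stated requires $u_\eps\in X$, so for the H\"older bound on $B_1$ without boundary data you should invoke Proposition~\ref{RS_prop_bdry} first, as the paper does.
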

\begin{proof}[Proof:] By H\"older continuity, the condition that $|u(0)|\leq \theta$ leads to the creation of an infinitesimal diffuse mass density $\eps^{1-n}\mu_\eps(B_\eps)\geq c_{n,\bar\alpha, \theta}$. We will use the monotonicity formula to integrate this up to show that if $\alpha=0$, macroscopic mass is created as well. In three dimensions, there is an additional technical complication which forces us to make two steps, one from the $\eps$-scale to the length scale of $\sqrt{\eps}$ and a second one to the original scale.

{\bf Step 1.} In a first step we show that the diffuse mass densities are uniformly bounded on large enough length scales. For $x\in\overline\Omega$, set $f_\eps(\rho):= \rho^{1-n}\mu_\eps(B(x,\rho))$. Without loss of generality, we may assume that $f_\eps(1)\leq 1$ for small enough $\eps>0$ and that $\alpha_\eps(B_{3/4}) + \int_{B_{3/4}\cap \{|u_\eps|>1\}}\frac1{\eps^3}W'(u_\eps)^2\d x\leq 1$ due to \cite[Proposition 3.5]{Roger:2006ta} (included here as Proposition~\ref{RS_prop_bdry}).

Take $\delta = \log(\eps)^{-2}$ in Lemma \ref{lemma RS estimate} to obtain from Lemma \ref{lemma estimated monotonicity formula} that for $0<r<R=1$ we have
\begin{align*}
f_\eps(r) &\leq 3\,f_\eps(R) + 3\,\alpha_\eps(B_{R}) + 2\int_r^{R}\frac{\xi_{\eps,+}(B_\rho)}{\rho^n}\d\rho\\
	&\leq C_{\bar\alpha,n} + C\int_{r}^{R} \frac{1}{\log(\eps)^2} \frac{f_\eps(2\rho)}{\rho}+ \frac{\eps}{\log(\eps)^2\rho^2}\\
	&\hspace{3cm} + \eps^2\,\log(\eps)^{2M}\rho^{-n}\left(\alpha_\eps(B_\rho) + \int_{B_\rho\cap\{|u_\eps|>1\}} \frac{W'(u_\eps)^2}{\eps^3}\d x\right) \d\rho\\
	&\leq C_{\bar\alpha,n} + \frac{C}{\log(\eps)^2}\int_r^R \frac{f_\eps(2\rho)}\rho\d\rho + \frac{C\eps}{\log(\eps)^2}\left(\frac1r - \frac1R\right)\\
	&\qquad + \frac{C\,\eps^2\,\log(\eps)^{2M}}{n-1}\left(r^{1-n} - R^{1-n}\right)\\
	&\leq C_{\bar\alpha,n} + \frac{C}{\log(\eps)^2} \int_r^{2R}\frac{f_\eps(\rho)}\rho\d\rho
\end{align*}
for a uniform constant $C_{\bar\alpha,n}$ for $r\geq \eps$ if $n=2$ and for $r\geq \sqrt{\eps}$ if $n=3$. We use Gr\"onwall's inequality backwards in time to deduce that
\[
f_\eps(r) \leq C_{\bar\alpha,n}\,\exp\left( \frac{C}{\log(\eps)^2}\int_r^{2R}\frac1\rho\d\rho\right) \leq C_{\bar\alpha,n}
\]
on $[\eps,1)$ if $n=2$ and on $[\sqrt{\eps},1)$ if $n=3$.

{\bf Step 2.} If $n=3$ and additionally 
\[
\alpha_\eps(B_{\sqrt\eps}) + \int_{B_{\sqrt\eps}\cap\{|u_\eps|>1\}}\frac{W'(u)^2}{\eps^3}\d x \leq \eps^\eta,
\]
we can estimate the terms in the second line more sharply to obtain uniform boundedness of $f_\eps(r)$ also for $\eps\leq r\leq R=\sqrt{\eps}$.

{\bf Step 3.} Now we turn to the proof of the statement. For a contradiction, assume that $(\alpha_\eps + \mu_\eps)(B_1)\to 0$ for a suitable sequence $u_\eps$. The functions $u_\eps$ are $C^{0,1/2}$-H\"older continuous with H\"older constant $C/\sqrt{\eps}$ for a uniform $C\geq 0$ on $B_{1/2}$, as can be obtained like in Lemma \ref{regularity lemma}. The only difference is that we need to use \cite[Proposition 3.5]{Roger:2006ta} (included here as Proposition~\ref{RS_prop_bdry}) to estimate $\int_{\{|u_\eps|>1\}}\frac1{\eps^3}W'(u_\eps)^2\d x$ due to the lack of boundary values. It follows that $u(x)\leq \frac{1+\theta}2$ for $x\in B(0,c\eps)$ for some uniform $c>0$, which implies 
\[
\eps^{1-n}\mu_\eps(B_\eps)\geq \frac1{\eps^n}\int_{B(0,c\eps)}W\left(\frac{1+\theta}2\right)\,dx =: c_{n,\bar\alpha, \theta}>0.
\]
In the following, we will assume $n=3$. The two-dimensional case follows with an easier argument of the same type. First we deduce that
\begin{align*}
f_\eps(\eps) &\leq 3\,f_\eps(\sqrt{\eps}) + 3\,\alpha_\eps(B_{\sqrt\eps})\\
	&\qquad +  C \int_\eps^{\sqrt\eps}\frac1{\log(\eps)^2\rho} + \frac{\eps}{\log(\eps)^2\,\rho^2} + \eps^2\,\log(\eps)^{2M}\rho^{-n}\,\eps^\beta\d\rho.
\end{align*}
Since the second line goes to zero as $\eps\to 0$ and $\alpha_\eps\wto 0$ by assumption, we deduce that $f_\eps(\sqrt\eps)\geq c_{n,\bar\alpha,\theta}/4$ for all sufficiently small $\eps>0$. Finally, we obtain
\begin{align*}
f_\eps(\sqrt{\eps}) &\leq 3\,f_\eps(1) + 3\,\alpha_\eps(B_1)\\
	&\qquad + C\int_{\sqrt\eps}^1 \frac1{\log(\eps)^2\rho} + \frac{\eps}{\log(\eps)^2\,\rho^2} + \eps^2\,\log(\eps)^{2M}\rho^{-n}d\rho.
\end{align*}
Again, the terms in the second line vanish with $\eps\to 0$ and $\alpha_\eps(B_1)\to 0$ by assumption, thus
\[
f_\eps(1)\geq \frac{c_{n,\bar\alpha,\theta}}{16}
\]
for all sufficiently small $\eps>0$. But this contradicts the assumption that $(\alpha_\eps+\mu_\eps)(B_1)\to 0$, so we are done. The case $u(0)\geq1/\theta$ follows similarly in two dimensions\stephan{; in three dimensions it is automatically excluded by the choice of space $Y_\eps^3$.}
\end{proof}

The estimate above is not sharp, but suffices for our purposes. After this Lemma, everything is in place to show uniform convergence of phase fields in two dimensions in section \ref{section 3 phase field}, while further results will be needed for Hausdorff convergence of the transition layers. In a very weak phrasing, Lemma \ref{lemma no mass at the edge} suffices, more precise versions (and the application to connectedness) need the entire section \ref{section 3 auxiliary 2}.

\subsection{Auxiliary Results II}\label{section 3 auxiliary 2}

In this section, we will derive technical results concerning how phase field approximations interact with the function $\phi$ as needed for the functional $C_\eps$ to impose connectedness. While the previous section focused on estimates away from the interface, here we investigate the structure of transition layers close to $\spt(\mu)$. The following Lemma is a special case of \cite[Proposition 3.4]{Roger:2006ta} with a closer attention to constants and the limit $\eps\to 0$ already taken.

\begin{lemma}\label{lemma no mass at the edge}
Let $x\in\R^n$, $r, \delta>0$, $0< \tau < 1- 1/\sqrt{2}$. Then
\[
\limsup_{\eps\to 0} \mu_\eps\big(\{u_\eps\geq 1-\tau\}\cap B(x,r)\big) \leq 4\,\tau\,\mu(B(x,r+\delta)).
\]
\end{lemma}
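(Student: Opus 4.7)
The inequality quantitatively expresses the heuristic that the diffuse energy $\mu_\eps$ concentrates in the transition layer $\{|u_\eps|\ll 1\}$ rather than in the caps $\{u_\eps\approx\pm 1\}$, and is close in spirit to \cite[Proposition 3.4]{Roger:2006ta}. My plan combines a pointwise equipartition bound from Young's inequality with the co-area formula applied to the optimal profile $G(u):=\int_{-1}^u\sqrt{2W(s)}\,ds$ and the structural estimates from Section \ref{section 3 auxiliary 1}.

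The starting point is the identity
\[
\frac{\eps}{2}|\nabla u_\eps|^2 + \frac{1}{\eps}W(u_\eps) = \sqrt{2W(u_\eps)}\,|\nabla u_\eps| + \Bigl(\sqrt{\eps/2}\,|\nabla u_\eps| - \sqrt{W(u_\eps)/\eps}\,\Bigr)^2,
\]
with the square term pointwise dominated by the discrepancy density $|\xi_\eps^{\mathrm{density}}|$ via $(a-b)^2\leq|a^2-b^2|$. Since $|\xi_\eps|(\R^n)\to 0$ for sequences of bounded $(\W_\eps+S_\eps)$-energy by \cite[Proposition 4.4]{Roger:2006ta}, writing $A_\eps:=\{u_\eps\geq 1-\tau\}$ and using the chain rule $|\nabla G(u_\eps)|=\sqrt{2W(u_\eps)}|\nabla u_\eps|$ yields
\[
c_0\,\mu_\eps\bigl(A_\eps\cap B(x,r)\bigr)\leq \int_{A_\eps\cap B(x,r)}|\nabla G(u_\eps)|\,dx + o(1).
\]
The co-area formula, together with the substitution $t=G(s)$, turns the right-hand side into
\[
\int_{1-\tau}^{\|u_\eps\|_\infty}\!\!\H^{n-1}\bigl(\{u_\eps=s\}\cap B(x,r)\bigr)\sqrt{2W(s)}\,ds.
\]

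I would split this integral into the bulk tail $s\in[1-\tau,1]$ and the excess $s\in(1,\|u_\eps\|_\infty]$. The $L^\infty$-bound from Lemma~\ref{regularity lemma} and Proposition~\ref{RS_prop_bdry} force $\{u_\eps\geq 1\}$ to have vanishing volume and $W'$-mass in the limit, so the excess contribution is $o(1)$. On the tail the sharp bound $\sqrt{2W(s)}=(1-s^2)/\sqrt 2\leq\sqrt 2\,\tau$ for $s\in[1-\tau,1]$ reduces matters to controlling $\int_{1-\tau}^1\H^{n-1}(\{u_\eps=s\}\cap B(x,r))\,ds=\int_{\{1-\tau<u_\eps<1\}\cap B(x,r)}|\nabla u_\eps|\,dx$ (another application of co-area) by a multiple of $\mu_\eps(B(x,r+\delta))$. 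For this last step I would localise using $\eta\in C_c^\infty(B(x,r+\delta))$ with $\eta\equiv 1$ on $B(x,r)$ and test the Allen--Cahn equation $-\eps\Delta u_\eps+\frac{1}{\eps}W'(u_\eps)=v_\eps$ against a scalar cutoff $\Phi(u_\eps)\eta^2$ where $\Phi$ is a smoothed indicator of $[1-\tau,1]$; the favourable sign $W'(u_\eps)\leq 0$ on $[1-\tau,1]$ absorbs the potential term, the boundary gradient term is controlled via $|\nabla\eta|\leq 2/\delta$, and the $v_\eps$-term vanishes with $\alpha_\eps$. Passing $\eps\to 0$ along a subsequence realising the $\limsup$ and using $\mu_\eps\stackrel{*}{\wto}\mu$ on the open ball $B(x,r+\delta)$ then gives the bound $4\tau\,\mu(B(x,r+\delta))$.

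The main obstacle is pinning down the universal constant $4$ in the last step. Tracking constants in the test-function argument is delicate; the assumption $\tau<1-1/\sqrt 2$ enters precisely here, since it guarantees the algebraic inequality $W(u_\eps)\geq(1-u_\eps)^2(2-\tau)^2/4$ with $(2-\tau)^2/4$ bounded below away from $1/4$, which is needed to close the Cauchy--Schwarz / Chebyshev estimates used to relate the volume of the thin layer $\{1-\tau<u_\eps<1\}$ to $\mu_\eps(B(x,r+\delta))$ with constant $4$ rather than a larger one.
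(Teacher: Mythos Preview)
The paper does not actually prove this lemma; it simply records it as a specialisation of \cite[Proposition~3.4]{Roger:2006ta} with the limit $\eps\to 0$ already taken. So there is no ``paper's own proof'' to compare against, and your task is really to reproduce the R\"oger--Sch\"atzle argument with sharp constants.

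Your Steps 1--5 are fine: the discrepancy identity, the co-area reduction to $\int_{1-\tau}^{\|u_\eps\|_\infty} \H^{n-1}(\{u_\eps=s\}\cap B_r)\,G'(s)\,ds$, the disposal of $\{u_\eps>1\}$ via \eqref{eq W'3}, and the bound $G'(s)\le\sqrt2\,\tau$ on $[1-\tau,1]$ are all correct. The gap is Step 7. After reducing to
\[
\sqrt2\,\tau\int_{\{1-\tau<u_\eps<1\}\cap B_r}|\nabla u_\eps|\,dx,
\]
you propose to bound this $L^1$ gradient integral by a multiple of $\mu_\eps(B(x,r+\delta))$ via a PDE test. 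But testing $-\eps\Delta u_\eps+\frac1\eps W'(u_\eps)=v_\eps$ against $\eta^2\Phi(u_\eps)$ produces $\int\eps\,\Phi'(u_\eps)\,\eta^2|\nabla u_\eps|^2$, an $L^2$ gradient term weighted by $\Phi'$, not the unweighted $L^1$ quantity you need. If $\Phi$ is a smoothed indicator of $[1-\tau,1]$ then $\Phi'$ is supported near the endpoints and the bulk of $\{1-\tau<u_\eps<1\}$ is invisible. If instead you take $\Phi(u)=\min\bigl((u-(1-\tau))_+,\tau\bigr)$ so that $\Phi'=\chi_{(1-\tau,1)}$, the resulting $W'$-term $-\frac1\eps\int\eta^2 W'(u_\eps)\Phi(u_\eps)$ has favourable sign but, when estimated, satisfies only $-W'(u)\Phi(u)\le \tau^2/2$ on $[1-\tau,1]$; this drives you to $\frac{\tau^2}{2\eps}\,|\{1-\tau<u_\eps<1\}\cap B_{r+\delta}|$, and there is \emph{no} lower bound on $W(u_\eps)$ on this set (since $W(1)=0$) with which to convert the volume back into $\mu_\eps$. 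The same obstruction appears if you try Cauchy--Schwarz on $\int|\nabla u_\eps|$ directly. Your proposed route genuinely stalls here, and the difficulty is structural rather than just a matter of constants.

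Separately, your reading of the hypothesis $\tau<1-1/\sqrt2$ is off: the inequality $W(u)\ge(1-u)^2(2-\tau)^2/4$ on $[1-\tau,1]$ is just $(1+u)^2\ge(2-\tau)^2$ and holds for every $\tau\in(0,1)$. The constraint $\tau<1-1/\sqrt2$ is equivalent to $W''>0$ on $[1-\tau,\infty)$ and more plausibly enters through a convexity or sign argument in R\"oger--Sch\"atzle's actual computation; you will need to consult \cite[Proposition~3.4]{Roger:2006ta} to see precisely which algebraic step requires it and how the constant $4$ emerges.
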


For all $x\in\R^n$, there are only countably many radii $r>0$ such that $\mu(\partial B(x,r))>0$. This follows from the fact that $\mu$ is finite and that there are at most finitely many radii such that $\mu(\partial B(x,r))\geq 1/k$, so that the union of those sets is countable. Thus for any $r>0$ there is $t\in(0,r)$ such that $\mu(\partial B(x,t)) = 0$. Letting $\delta\to 0$ at such a radius $t$ (and using that the discrepancy measures go to zero) gives us the following result (compare also \cite[Lemma 9]{Dondl:2014vn}).

\begin{corollary}\label{some phi}
For all $x\in\spt(\mu)$, $r>0$ and $\tau < 1/8$ we have 
\[
\liminf_{\eps\to 0}\frac1\eps\,\L^n\left(\,\{|u_\eps|<1-\tau \}\cap B(x,r)\}\right)>0.
\]
\end{corollary}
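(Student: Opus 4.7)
The plan is to apply Lemma \ref{lemma no mass at the edge} at a carefully chosen radius $t<r$ to obtain a sharp lower bound on the diffuse mass measure of the transition region $\{|u_\eps|<1-\tau\}$, and then to translate this bound to the Lebesgue measure by means of equipartition of energy.

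First, since $\mu$ is a finite Radon measure, only countably many spheres $\partial B(x,s)$ carry positive $\mu$-mass; as $x\in\spt(\mu)$, every open ball $B(x,s)$ has positive $\mu$-mass. We can therefore pick $t\in(0,r)$ with $\mu(\partial B(x,t))=0$ and $\mu(B(x,t))>0$. Applying Lemma \ref{lemma no mass at the edge} to $\{u_\eps\ge 1-\tau\}\cap B(x,t)$ and, using that $\mu_\eps$ is invariant under $u_\eps\mapsto -u_\eps$, also to $\{u_\eps\le -(1-\tau)\}\cap B(x,t)$, summing and sending $\delta\to 0$ using $\mu(\partial B(x,t))=0$ gives
\[
\limsup_{\eps\to 0}\mu_\eps\bigl(\{|u_\eps|\ge 1-\tau\}\cap B(x,t)\bigr)\le 8\,\tau\,\mu(B(x,t)).
\]
Combined with the standard identity $\mu_\eps(B(x,t))\to\mu(B(x,t))$, valid whenever $\mu(\partial B(x,t))=0$, this yields
\[
\liminf_{\eps\to 0}\mu_\eps\bigl(\{|u_\eps|<1-\tau\}\cap B(x,t)\bigr)\ge (1-8\tau)\,\mu(B(x,t))>0,
\]
and the strict positivity is exactly where $\tau<1/8$ enters.

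Next, to pass from the mass measure to the volume, set $A_\eps:=\{|u_\eps|<1-\tau\}\cap B(x,t)$ and observe that $|u_\eps|<1$ on $A_\eps$, so $W(u_\eps)\le W(0)=1/4$. Decomposing the mass density into twice the potential plus the discrepancy density and integrating over $A_\eps$,
\[
\mu_\eps(A_\eps)-\xi_\eps(A_\eps)=\frac{2}{c_0\,\eps}\int_{A_\eps}W(u_\eps)\,\dx\le\frac{1}{2\,c_0\,\eps}\,\L^n(A_\eps),
\]
which rearranges to $\eps^{-1}\L^n(A_\eps)\ge 2\,c_0\bigl(\mu_\eps(A_\eps)-|\xi_\eps|(B(x,t))\bigr)$. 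Equipartition of energy along bounded-energy phase fields \cite[Propositions 4.4, 4.9]{Roger:2006ta} gives $|\xi_\eps|(B(x,t))\to 0$, so combining with the previous paragraph yields $\liminf_{\eps\to 0}\eps^{-1}\L^n(A_\eps)\ge 2\,c_0(1-8\tau)\,\mu(B(x,t))>0$, and the claim follows from $A_\eps\subset\{|u_\eps|<1-\tau\}\cap B(x,r)$.

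The main obstacle, and the reason the corollary is not a mere rewording of Lemma \ref{lemma no mass at the edge}, is this last passage from $\mu_\eps$ to $\eps^{-1}\L^n$. No pointwise bound on $|\nabla u_\eps|^2$ is available, so the argument has to rely on vanishing of the full discrepancy measure $|\xi_\eps|$, which is precisely what is needed to align the natural diffuse transition width $\eps$ with the Lebesgue measure.
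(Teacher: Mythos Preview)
Your argument is correct and is precisely the fleshed-out version of the paper's terse sketch preceding the corollary: pick $t\in(0,r)$ with $\mu(\partial B(x,t))=0$, apply Lemma \ref{lemma no mass at the edge} on both sides together with $\mu_\eps(B(x,t))\to\mu(B(x,t))$, and use the vanishing of $|\xi_\eps|$ to pass from $\mu_\eps$ to $\eps^{-1}\L^n$. The paper gives exactly these ingredients in one sentence; you have supplied the careful bookkeeping with $\limsup$/$\liminf$ and the explicit inequality $\mu_\eps(A_\eps)-\xi_\eps(A_\eps)\le\frac1{2c_0\eps}\L^n(A_\eps)$ that makes the passage rigorous.
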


The following arguments rely more on the rectifiable structure of the measure $\mu$ that we are approximating. Specifically, we introduce the diffuse normal direction by
\[
\nu_\eps:= \frac{\nabla u_\eps}{|\nabla u_\eps|}
\]
when $\nabla u_\eps\neq 0$ and $0$ else. To work with varifolds, we introduce the Grassmannian $G(n,n-1)$ of $n-1$-dimensional subspaces of $\R^n$. We refer readers unfamiliar with varifolds or countably rectifiable sets to the excellent source \cite{simon1983lectures}; an introduction with \mine{different} focus which is easier to find and covers most results relevant for us is \cite{krantz2008geometric}. Recall the following result.

\begin{lemma}\cite[Propositions 4.1, 5.1]{Roger:2006ta}\label{proposition varifold convergence}
Define the $n-1$-varifold $V_\eps:= \mu_\eps \otimes \nu_\eps$ by 
\[
V_\eps(f) = \int_{\R^n\times G(n,n-1)} f(x, \langle \nu_\eps\rangle ^\bot)\d \mu_\eps\qquad\forall\ f\in C_c(\R^n\times G(n,n-1)).
\]
Then there is an integral varifold $V$ such that $V_\eps\to V$ \stephan{weakly} as Radon measures on $\R^n\times G(n,n-1)$ (varifold convergence). The limit satisfies
\[
\mu_V = \mu,\qquad H_\mu^2\,\mu\leq \alpha
\]
where $\mu_V$ is the mass measure of $V$ and $H_\mu$ denotes the generalised mean curvature of $\mu$. In particular, $\W(\mu)\leq \bar \alpha$. 
\end{lemma}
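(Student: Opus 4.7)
The plan is to obtain the four conclusions (varifold convergence, identification of the mass measure, integrality, and the curvature bound) in essentially that order, closely following the strategy of Röger--Schätzle.

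\textbf{Step 1 (compactness and mass measure).} First I would observe that $V_\eps$ are non-negative Radon measures on $\R^n \times G(n,n-1)$ with total mass $V_\eps(\R^n\times G(n,n-1)) = \mu_\eps(\R^n)$ which is uniformly bounded by assumption. By the compactness theorem for Radon measures I can extract a subsequence along which $V_\eps$ converges weakly* to a Radon measure $V$ on $\R^n\times G(n,n-1)$. Let $\pi\colon \R^n\times G(n,n-1) \to \R^n$ denote the projection. Since $\pi_\# V_\eps = \mu_\eps$ and pushforward commutes with weak* limits, we get $\pi_\# V = \mu$, i.e.\ the mass measure of $V$ equals $\mu$.

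\textbf{Step 2 (first variation and curvature bound).} The key computation is the first variation. For $X\in C_c^1(\R^n,\R^n)$ the tangent plane at a point where $\nabla u_\eps\neq 0$ is $\langle \nu_\eps\rangle^\perp$, so the tangential divergence is $\text{div}_T X = \text{div}\, X - \langle DX\,\nu_\eps,\nu_\eps\rangle$. Writing $\frac{\eps}{2}|\nabla u_\eps|^2 = \eps|\nabla u_\eps|^2 - \big(\frac{\eps}{2}|\nabla u_\eps|^2\big)$ and substituting we get
\[
\delta V_\eps(X) = \frac{1}{c_0}\int \Big(\tfrac{\eps}{2}|\nabla u_\eps|^2 + \tfrac{1}{\eps}W(u_\eps)\Big)\,\text{div}\, X\dx - \frac{1}{c_0}\int \eps\,\langle DX\,\nabla u_\eps,\nabla u_\eps\rangle \dx + (\text{discrepancy terms}).
\]
An integration by parts on the first two terms using $-\eps\Delta u_\eps + \frac1\eps W'(u_\eps) = v_\eps$ yields, modulo terms controlled by $|\xi_\eps|$, the identity
\[
\delta V_\eps(X) = -\frac{1}{c_0}\int v_\eps\,\langle X,\nabla u_\eps\rangle \dx + R_\eps(X),
\]
where $R_\eps(X)\to 0$ as $\eps\to 0$ because the discrepancy vanishes along bounded-energy sequences (cf.\ \cite[Prop.~4.4, 4.9]{Roger:2006ta}). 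Then Cauchy--Schwarz combined with equipartition gives
\[
|\delta V_\eps(X)|^2 \leq \frac{1}{c_0}\int \frac{v_\eps^2}{\eps}\,\chi_{\spt X}\dx \cdot \frac{1}{c_0}\int \eps|\nabla u_\eps|^2|X|^2\dx \leq \alpha_\eps(\spt X)\cdot 2\mu_\eps(|X|^2),
\]
and in the limit $|\delta V(X)|^2 \leq 2\alpha(\spt X)\cdot \mu(|X|^2)$. Localising, this shows $\delta V$ is absolutely continuous with respect to $\mu$ with an $L^2(\mu)$-density, which is by definition the generalised mean curvature $H_\mu$, and yields the pointwise bound $H_\mu^2\,\mu \leq \alpha$ (the sharp constant follows by a standard refinement using cut-offs concentrating on $\spt X$).

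\textbf{Step 3 (rectifiability and integrality).} Once $V$ has a generalised mean curvature in $L^2(\mu)$, rectifiability follows from Allard's rectifiability theorem provided the lower density $\theta_*^{n-1}(\mu,x)>0$ for $\mu$-a.e.\ $x$. This density bound is provided by the monotonicity formula \eqref{our monoticity}: for $\mu$-a.e.\ $x\in\spt(\mu)$ the function $r\mapsto r^{1-n}\mu(B(x,r))$ can be compared to its value at a fixed $R$ via the estimate in Lemma~\ref{lemma estimated monotonicity formula} in the limit, and the Li-Yau-type inequality of Remark~\ref{remark li-yau} also furnishes an upper bound $\theta^{*,n-1}(\mu,x)\leq C\,\alpha(\overline\Omega)$. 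Integrality of $V$, i.e.\ the statement that the $(n-1)$-density of $\mu$ takes integer values $\mu$-a.e., is the genuinely subtle step. It requires a blow-up analysis: for $\mu$-a.e.\ $x$ one rescales $u_\eps$ on balls $B(x,\eps)$ and shows, using the tangent plane structure from the rectifiability of $V$ together with the vanishing of the discrepancy, that the blow-ups converge to copies of the one-dimensional optimal profile $q(t)=\tanh(t/\sqrt 2)$ stacked with integer multiplicity. This is precisely the content of \cite[Propositions~4.1, 5.1]{Roger:2006ta} and the argument transfers verbatim.

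\textbf{Main obstacle.} The compactness, the mass identification, and the curvature bound are essentially soft consequences of equipartition plus Cauchy--Schwarz. The hard part is integrality: one has to control both the (possibly wild) behaviour of $\alpha$ at small scales and the geometry of the diffuse interface sufficiently well to identify tangent measures of $V$ as integer multiples of hyperplanes. This rests on a careful use of the monotonicity formula, the sharp discrepancy estimates, and the one-dimensional symmetrisation of blow-ups, which is where almost all the analytic depth of \cite{Roger:2006ta} resides.
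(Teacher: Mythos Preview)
The paper does not give its own proof of this lemma; it is stated as a direct citation of \cite[Propositions 4.1, 5.1]{Roger:2006ta} and used as a black box. Your proposal is therefore not competing with an in-paper argument but rather sketching the content of the cited reference, and your outline does follow the R\"oger--Sch\"atzle strategy faithfully: compactness of $V_\eps$ via the mass bound, the first-variation computation leading to the $L^2(\mu)$ mean-curvature bound through Cauchy--Schwarz and vanishing discrepancy, rectifiability via Allard once positive lower density is established, and integrality via the one-dimensional blow-up analysis.

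One small point worth tightening: in Step~2 your inequality $\tfrac{1}{c_0}\int \eps|\nabla u_\eps|^2 |X|^2\dx \leq 2\mu_\eps(|X|^2)$ is of course correct, but to recover the \emph{sharp} bound $H_\mu^2\mu\leq\alpha$ (rather than $2\alpha$) you should pass to the limit using the vanishing of the discrepancy, so that $\tfrac{1}{c_0}\,\eps|\nabla u_\eps|^2\,\L^n \stackrel{*}{\rightharpoonup} \mu$ itself; this is what replaces the crude factor $2$ by $1$, and it is cleaner than the ``refinement using cut-offs'' you allude to. Other than this cosmetic issue, your plan matches the cited argument.
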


The following result is a suitably adapted version of \cite[Proposition 5.5]{Roger:2006ta} for our purposes. It shows that given small discrepancy measures and small oscillation of the gradient, a bounded energy sequence looks very much like an optimal interface in small balls. Using our improved bounds from Lemma \ref{regularity lemma}, we can drop most of their technical assumptions.

\begin{lemma}\label{lemma local transition}
Let $\delta, \tau>0$ and denote $\nu_{\eps, n}= \langle \nu_\eps, e_n\rangle$. Then there exist $0<L <\infty$ depending on $\delta$ and $\tau$ only and $\gamma>0$ depending on $\bar\alpha, \delta$ and $\tau$ such that the following holds for all $x\in\R^n$. If 

\begin{enumerate}
\item $|u_\eps(x)|\leq 1-\tau$ and
\item $|\xi_\eps|(B(x, 4L\eps)) + \int_{B(x, 4L\eps)}1- \nu_{\eps,n}^2\d \mu_\eps \leq \gamma\,(4L\eps)^{n-1}$ 
\end{enumerate}
then also \stephan{the following two properties hold:}

\begin{itemize}
\item \stephan{ The blow up $\tilde u_\eps(y) = u_\eps(x+\eps y)$ is $C^{0,1/4}$-close to an optimal profile $q$ on $B(0, 3L)$: 
\[
|\pm \tilde u_\eps - q(y_n -  t_1)|_{0, 1/4, B(0,3L)} < \delta.
\]
The optimal profile $q$ is the function from the $\limsup$-construction and $t_1= q^{-1}(u_\eps(x))$. }
\item $|u_\eps(\hat x,x_n+ t)|\geq 1-\tau/2$ for all $L\eps\leq |t|\leq 3L\eps$, where $\hat x = (x_1,\dots, x_{n-1})$ and $u$ changes sign in between.
\end{itemize}
\end{lemma}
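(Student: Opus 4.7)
The plan is a blow-up and contradiction argument in the spirit of \cite[Proposition 5.5]{Roger:2006ta}, streamlined by the improved regularity from Lemma \ref{regularity lemma}. Fix $\delta, \tau > 0$ and postpone the choice of $L = L(\delta, \tau)$. Assuming the claim fails for every $\gamma > 0$, one obtains sequences $\eps_k \to 0$, $x_k \in \R^n$ and $u_{\eps_k}$ satisfying hypotheses (1) and (2) with $\gamma_k \to 0$ but violating at least one of the two conclusions. The rescalings $\tilde u_k(y) := u_{\eps_k}(x_k + \eps_k y)$ solve $-\Delta \tilde u_k + W'(\tilde u_k) = \eps_k \tilde v_k$ on $B(0, 4L)$, and $\|\eps_k \tilde v_k\|_{L^2(B(0, 4L))}^2 = c_0\, \eps_k^{3-n}\, \alpha_{\eps_k}(B(x_k, 4L\eps_k)) \to 0$ since $n \le 3$ and the $\alpha_{\eps_k}$ masses are uniformly bounded. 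Lemma \ref{regularity lemma} provides uniform $L^\infty$-bounds, and then arguing as in Steps 2--3 of its proof yields uniform $W^{2,2}(B(0, 4L))$-bounds; passing to a subsequence, $\tilde u_k \to \tilde u$ strongly in $C^{0,1/4}(\overline{B(0, 3L)})$ and weakly in $W^{2,2}(B(0, 4L))$.

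The limit $\tilde u$ solves the stationary Allen--Cahn equation $-\Delta \tilde u + W'(\tilde u) = 0$ and hence is $C^{2,\alpha}$-smooth by elliptic regularity. The rescaled discrepancy hypothesis reads $\int_{B(0, 4L)} \bigl|\tfrac{1}{2}|\nabla \tilde u_k|^2 - W(\tilde u_k)\bigr| \d y \le c_0\, \gamma_k (4L)^{n-1} \to 0$, which in the limit becomes the equipartition identity $\tfrac{1}{2}|\nabla \tilde u|^2 \equiv W(\tilde u)$. The rescaled tilt hypothesis analogously yields $(1 - \tilde\nu_n^2)\,|\nabla \tilde u|^2 = 0$ almost everywhere, so $\nabla \tilde u \parallel e_n$ wherever it is nonzero, and by continuity of $\nabla \tilde u$ the partials $\partial_i \tilde u$ vanish identically for $i < n$ on $B(0, 4L)$. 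Hence $\tilde u$ depends only on $y_n$, solves the ODE $\tilde u'' = W'(\tilde u)$, and satisfies $(\tilde u')^2 = 2\,W(\tilde u)$. The bounded solutions of this pair on $(-4L, 4L)$ are the constants $\pm 1$ and translates $\pm q(\cdot - t_1)$ of the heteroclinic connection normalised by $q(0) = 0$; since $|\tilde u(0)| \le 1 - \tau$ the constants are excluded, giving $\tilde u(y) = \pm q(y_n - t_1)$ for a choice of sign and some $|t_1| \le q^{-1}(1 - \tau) =: T_\tau$.

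Now fix $L = L(\delta, \tau) > 4 T_\tau$ large enough that $|q(s)| \ge 1 - \tau/4$ whenever $|s| \ge L - T_\tau$; this is possible because $q(s) \mp 1$ decays exponentially at $\pm\infty$. For $k$ large, uniform convergence gives $\|\pm\tilde u_k - q(\cdot - t_1)\|_{C^{0,1/4}(B(0, 3L))} < \min(\delta, \tau/4)$ for a suitable sign choice. The bound $<\delta$ already contradicts failure of the first conclusion, while the bound $<\tau/4$ combined with the choice of $L$ forces $|\tilde u_k(y)| \ge 1 - \tau/2$ for $L \le |y_n| \le 3L$, with a sign change between these strips since $\pm q(\cdot - t_1)$ crosses zero at $t_1$ and $|t_1| < L$; this contradicts failure of the second conclusion. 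The main obstacle is extracting the rigid one-dimensional structure of the limit from the measure-theoretic tilt and discrepancy bounds: the equipartition identity and the parallelism $\nabla \tilde u \parallel e_n$ must pass to the limit in a sense compatible with the pointwise Allen--Cahn ODE, which requires combining the strong $C^{0,1/4}$-convergence (to identify $t_1$ through the value at the origin and to promote uniform $L^\infty$-closeness) with the weak $W^{2,2}$-convergence (to inherit the Euler--Lagrange equation).
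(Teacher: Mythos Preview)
Your argument is essentially the paper's: contradiction via blow-up, identification of the limit as a translated optimal profile from the discrepancy and tilt hypotheses, and contradiction through $C^{0,1/4}$-compactness. Two points deserve correction.

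First, a minor slip: the interior elliptic estimate (as in Steps 2--3 of Lemma~\ref{regularity lemma}) yields uniform $W^{2,2}$-bounds only on the \emph{smaller} ball $B(0,3L)$ from data on $B(0,4L)$, not on all of $B(0,4L)$. This is harmless, since $B(0,3L)$ is where you need the conclusions.

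Second, and more substantively: your claim that the limit $\tilde u$ solves the stationary Allen--Cahn equation is not justified when $n=3$. You write $\|\eps_k\tilde v_k\|_{L^2(B(0,4L))}^2 = c_0\,\eps_k^{3-n}\,\alpha_{\eps_k}(B(x_k,4L\eps_k))$; for $n=3$ this equals $c_0\,\alpha_{\eps_k}(B(x_k,4L\eps_k))$, which is merely bounded by $\bar\alpha+1$ and need not tend to zero, since the diffuse Willmore measure can concentrate at scale~$\eps_k$. Hence the weak $L^2$-limit of $\eps_k\tilde v_k$ may be nonzero, and $\tilde u$ need not solve Allen--Cahn. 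Fortunately this step is entirely dispensable, and the paper does without it: the vanishing of $\int |\hat\nabla \tilde u_k|$ (your tilt computation) gives $\partial_i\tilde u=0$ a.e.\ for $i<n$ as a Sobolev function, with no need for continuity of $\nabla\tilde u$; then $\tilde u(y)=p(y_n)$ with $p\in W^{2,2}\hookrightarrow C^1$ in one dimension, and the equipartition identity $(p')^2=2W(p)$ alone (a first-order ODE with locally Lipschitz right-hand side away from $\pm1$) forces $p=\pm q(\cdot-t_1)$ by Picard--Lindel\"of and the constraint $|p(0)|\le 1-\tau$. Simply delete the Allen--Cahn and $C^{2,\alpha}$ claims and your proof goes through.
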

\begin{proof}
Without loss of generality, we may assume that $x=0$ and write $B_r:= B(0,r)$. Recall that $q'(t) = \sqrt{2\,W(q(t))\,}$ and $\lim_{t\to \pm \infty}q(t) = \pm1$. Thus we can pick $L>0$ such that $|q(t)|\geq 1-\tau/4$ for all $t>L$. 

Assume for a contradiction that there is no constant $\gamma>0$ such that the results of the Lemma hold. Then for $\gamma^j\to 0$, there must be a sequence $u_\eps^j$ such that $|u_\eps^j(0)|\leq 1-\tau$, $\W_\eps(u_\eps^j)\leq\bar \alpha+1$ and 
\[
|\xi^j_\eps|(B_{4L\eps}) + \int_{B_{4L\eps}}1- (\nu_{\eps,n}^j)^2\d \mu_\eps\leq \gamma^j\,(4L\eps)^{n-1},
\]
 but the conclusions of the Lemma do not hold. Considering the blow ups $\tilde u^j: B_{4L}\to \R$ with $\tilde u^j(y) = u_\eps^j(\eps y)$ we obtain
 \[
||\tilde u^j||_{2,2,B_{3L}}\leq C_{\bar\alpha, n, L}
\]
like in Lemma \ref{regularity lemma}\stephan{. H}ence there is $\tilde u\in W^{2,2}(B_{3L})$ such that
\[
\tilde u^j\wto \tilde u\quad\text{ in }W^{2,2}(B_{3L}).
\]
Since $W^{2,2}$ embeds compactly into $W^{1,2}$ and $L^4$, we see that
\begin{align*}
\int_{B_{3L}} \big|\,|\nabla \tilde u|^2/&2 - W(\tilde u)\,\big|\d x = \lim_{j\to \infty} \int_{B_{3L}} \left|\,|\nabla \tilde u^j|^2/2 - W(\tilde u^j)\,\right|\d x\\
	&\leq \lim_{j\to \infty}\eps^{1-n}|\xi_\eps^j|(B_{4L\eps})\\
	&\leq \liminf_{j\to \infty}\, (4L)^{n-1}\gamma^j\\
	&= 0
\intertext{and when we set $\hat\nabla u = (\partial_1 u, \dots,\partial_{n-1}u)$, we get}
\int_{B_{3L}}|\hat\nabla \tilde u|\d x &= \lim_{j\to \infty} \int_{B_{3L}}|\hat\nabla \tilde u^j|\d x\\
	&= \lim_{j\to \infty}\int_{B_{3L}} \sqrt{|\nabla \tilde u_j|^2 - |\partial_n\tilde u^j|^2\,}\d x\\
	&\leq \liminf_{j\to\infty} \int_{B_{4L}} |\nabla \tilde u^j|\,\sqrt{1 - \left(\tilde\nu^j_n\right)^2\,}\d x\\
	&\leq \liminf_{j\to\infty} \left(\omega_n\,(4L)^n\right)^{1/2} \left(\int_{B_{4L}} |\nabla \tilde u^j|^2\,\left(1-(\tilde \nu^j_n)^2\right)\d x\right)^\frac12\\
	&\leq \liminf_{j\to\infty} \sqrt{8L\,\omega_n} \left(\eps^{1-n}\int_{B_{4L\eps}} 1- (\nu^j_n)^2\d \mu_\eps\right)^\frac12\\
	&\leq \liminf_{j\to\infty} \sqrt{8L\,\omega_n\,\gamma^j\,}\\
	&=0.
\end{align*}
Thus we can see that
\[
|\nabla \tilde u|^2 = 2\,W(\tilde u),\qquad \nabla \tilde u = (0, \dots, 0, \partial_n\tilde u).
\]
Clearly, this means that $\tilde u (y) = p(y_n)$ for a function $p$ with $p' = \pm \sqrt{2\,W(p)\,}$. Using that $|\tilde u(0)|\leq 1-\tau$ and the Picard-Lindel\"off theorem on the uniqueness of the solutions to ODEs, we see that $p(y_n) = \pm q(y_n-\bar y)$ for some $\bar y\in\R$ which can easily be fixed by the initial condition for $p(0)$.

Since weak $W^{2,2}$-convergence implies strong $C^{0,1/4}$-convergence in $n=2,3$ dimensions, we see that there is $j\in\N$ such that the claim of the Lemma holds for $u_\eps^j$ contradicting our assumption. Thus the Lemma is proven.
\end{proof}

To deal with the rectifiable sets in the next section more easily we prove a structure result for rectifiable sets. The result seems standard, but we have been unable to find a reference for it. As usual, we call a function on a closed set differentiable if it admits a differentiable extension to a larger open set.

\begin{lemma}\label{lemma structure rectifiable}
Let $M$ be a countably $k$-rectifiable set in $\R^n$. Denote by $B$ the closed unit ball in $k$ dimensions. Then there exist injective $C^1$-functions $f_i:B\to \R^n$ with $\nabla f_i\neq 0$ on $B$ such that 
\[
\H^k\left(M\setminus \bigcup_{i=1}^\infty f_i(B)\right) =0
\]
and such that $f_i(B)\cap f_j(B)=\emptyset$ for all $i\neq j$.
\end{lemma}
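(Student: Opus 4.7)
The plan is to first exhibit $M$ as a countable union of compact $C^1$-immersed images of $B$ up to an $\H^k$-null set, and then to inductively disjointify the family, exploiting the fact that these images are closed in $\R^n$.

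For the initial cover, I invoke the standard structure theory of countably $k$-rectifiable sets. Starting from the Lipschitz characterisation $M \subset N \cup \bigcup_\alpha g_\alpha(\R^k)$ with $\H^k(N) = 0$ and $g_\alpha$ Lipschitz, Federer's approximation of Lipschitz maps by $C^1$ maps on sets of arbitrarily large measure, the area formula (to discard the set of critical points where $\nabla g_\alpha$ has rank strictly less than $k$, whose image is $\H^k$-null), and the inverse function theorem (to ensure local injectivity) together yield a countable family of $C^1$ immersions $\psi_\alpha : U_\alpha \to \R^n$ with $U_\alpha \subset \R^k$ open, each injective on $U_\alpha$ and with $\nabla \psi_\alpha$ of rank $k$ everywhere, whose images cover $M$ up to a null set. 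Picking closed sub-balls inside each $U_\alpha$ and rescaling to the unit ball, I may further assume $\psi_\alpha : B \to \R^n$ is $C^1$ and injective on a neighbourhood of $B$ with $\nabla \psi_\alpha$ of rank $k$ on $B$.

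For the disjointification, set $E_\alpha := \psi_\alpha(B)$, which is compact and therefore closed in $\R^n$, and define
\[
F_i := E_i \setminus \bigcup_{j<i} E_j \qquad (i \in \N).
\]
Since $\bigcup_{j<i} E_j$ is a finite union of closed sets, $\psi_i^{-1}(F_i)$ is relatively open in $B$, and its intersection with the interior of $B$ is an open subset of $\R^k$. The Vitali covering theorem decomposes this open set as a countable disjoint union of closed balls $\bar B_{r_{i,\ell}}(x_{i,\ell})$ together with an $\L^k$-null remainder. Defining $f_{i,\ell}(y) := \psi_i(x_{i,\ell} + r_{i,\ell}\, y)$ produces injective $C^1$ maps from $B$ to $\R^n$ with $\nabla f_{i,\ell}$ of rank $k$. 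The images $f_{i,\ell}(B)$ are pairwise disjoint within a fixed $i$ because $\psi_i$ is injective and the parameter balls are disjoint, and across different $i$ because $f_{i,\ell}(B) \subset F_i$ while the $F_i$ are disjoint by construction. Coverage of $M$ up to a null set follows from the area formula: the only points of $M$ missed lie either in the original null set from the first step or in $\psi_i$ applied to an $\L^k$-null subset of $B$, hence form an $\H^k$-null set. Re-indexing the double sequence $(i,\ell)$ produces the claimed sequence.

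The most delicate step is clearly the first one, where several classical ingredients (Lipschitz-to-$C^1$ approximation, the area formula, the local immersion theorem) must be combined with care. The disjointification step two is by comparison routine, but this is precisely because of the deliberate choice of \emph{closed} parameter domain: compactness of $B$ makes each $\psi_\alpha(B)$ closed in $\R^n$, which is exactly what forces the inductive differences $\psi_i^{-1}(F_i)$ to be open in $\R^k$ and hence amenable to a Vitali decomposition into closed balls.
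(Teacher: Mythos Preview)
Your proposal is correct and follows essentially the same two-stage architecture as the paper: first produce a countable family of injective $C^1$ maps $B\to\R^n$ with full-rank differential whose images cover $M$ up to an $\H^k$-null set, then disjointify inductively by removing from each domain the preimage of the earlier (closed) images and applying Vitali to the resulting open set. The only notable difference is in the first stage: you start from the Lipschitz characterisation and invoke Federer's $C^1$ approximation together with the area formula to discard the critical set, whereas the paper cites the $C^1$ characterisation directly and uses the Morse--Sard lemma for the same purpose; both routes are standard and arrive at the same intermediate object. One small point you leave implicit is that ``picking closed sub-balls inside each $U_\alpha$'' must itself be done via a Vitali-type covering to retain coverage up to a null set, which the paper makes explicit.
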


\begin{proof}
According to \cite[Lemma 5.4.2]{krantz2008geometric} or \cite[Lemma 11.1]{simon1983lectures} there is a countable collection of $C^1$-maps $g_i:\R^k\to\R^n$ such that
\[
M \subset N\cup \bigcup_{i=1}^\infty g_i\left(\R^k\right)
\]
where $\H^k(N) = 0$. Without loss of generality, $N$ is assumed to be disjoint from the other sets. First we need to make the individual maps $g_i$ one-to-one. To do that, we define the set where injectivity fails in a bad way:
\[
A_i:= \left\{x\in\R^k\:|\:\forall\ r>0\:\exists\ y\in B(x,r)\text{ such that }g_i(x)=g_i(y)\:\right\}.
\]
Due to the failure of local injectivity, we see that the Jacobian $J_{g_i}(x)$ vanishes on $A_i$. Since $g_i$ is a $C^1$-function, the set $D_i := J_{g_i}^{-1}(0)$ is closed and by the Morse-Sard Lemma \cite[3.4.3]{F} 
\[
\H^k\left(g_i(D_i)\right)  =    0.
\]
Set $U_i:= \R^k\setminus D_i$. Now as in \cite[Chapter 1.5, Corollary 2]{Evans:1992uy} we can use Vitali's covering theorem \cite[Chapter 1.5, Theorem 1]{Evans:1992uy} to obtain a countable selection of closed balls $B_i^j$ such that $f_i$ is injective with non-vanishing gradient on $B_i^j$ for all $j\in\N$ and
\[
\L^k\left(U_i\setminus\bigcup_{j=1}^\infty B_i^j\right)=0.
\]
Since the boundary of a $k$-ball has Hausdorff dimension $k-1$, we could equally well take open balls. Since $C^1$-functions map sets of $\L^k$-measure zero to sets of $\H^k$-measure zero, we have shown that we can write 
\[
M \subset \tilde N \cup\bigcup_{j=1}^\infty \tilde g_i(B^\circ)
\]
where $\H^k(\tilde N) = 0$,  $\tilde g_i:B\to\R^n$ is one-to-one, $C^1$, and has a non-vanishing gradient everywhere on the closed ball $B$. The functions $\tilde g_m$ are obtained by rescaling suitable restrictions of $g_i$ from $B_i^j$ to the unit ball. Finally, we have to cut out the sets that get hit by more than one function $\tilde g_m$. Inductively, we define
\[
\tilde U_m := B^\circ \setminus \tilde g_m^{-1}\left(\bigcup_{l=1}^{m-1} \tilde g_l(B)\right).
\]
Finally, we use Vitali's Lemma again to pick collections of closed balls $\tilde B_m^l$ such that 
\[
\L^k\left(\tilde U_m \setminus \bigcup_{l=1}^\infty\tilde B_m^l\right) = 0.
\]
Rescaling the restricted functions from these balls and translating to the unit ball gives us the result.
\end{proof}

The proof of the following Lemma resembles that of the integrality of $\mu$ in \cite[Lemma 4.2]{Roger:2006ta}. \mine{It faces different challenges: while we do not need to prove multi-layeredness, we cannot zoom in on the tangent space since we need a macroscopic measure contribution to the double integral. Thus we need Lemma \ref{lemma structure rectifiable} to approximate macroscopically the structure of $\mu$.
}

\begin{lemma}\label{lemma phi sees mu}
Let $\phi\in C^0(\R)$ such that $\phi\geq 0$ and $\int_{-1}^1\phi(u)\d u >0$. If $x\in \spt(\mu)$, then
\[
\liminf_{\eps\to 0}\:\frac1\eps\,\int_{B(x,r)} \phi(u_\eps)\d x>0
\]
for all $r>0$.
\end{lemma}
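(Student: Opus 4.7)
Without loss of generality we may assume $\spt(\phi)\Subset(-1,1)$: since $\phi$ is continuous with $\int_{-1}^1\phi(u)\d u>0$, there is an interval $[a_1,b_1]\Subset(-1,1)$ on which $\phi\geq c_0>0$, and one can replace $\phi$ by a smaller compactly supported continuous function which retains this positivity. The idea is to parameterise a macroscopic piece of $\spt(\mu)$ inside $B(x,r)$ using the integral varifold structure, and to integrate the one-dimensional contribution of the optimal profile along normal directions by Fubini.

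By Lemma~\ref{proposition varifold convergence} we have $\mu=\theta\,\H^{n-1}\lfloor M$ with $M$ countably $(n-1)$-rectifiable and integer multiplicity $\theta\geq 1$. Since $x\in\spt(\mu)$, $\H^{n-1}(M\cap B(x,r/2))>0$, and Lemma~\ref{lemma structure rectifiable} yields an injective $C^1$-patch $S=f(\overline{B_1^{n-1}})\subset M\cap B(x,r/2)$ with $\nabla f\neq 0$ and $\H^{n-1}(S)>0$, admitting an approximate tangent plane $T_yS$ at $\H^{n-1}$-a.e.\ $y\in S$.

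The crucial step is to show that for $\H^{n-1}$-a.e.\ $y\in S$ and all sufficiently small $\eps$, one can select a point $z_\eps(y)\in B(y,C\eps)$ at which both hypotheses of Lemma~\ref{lemma local transition} are satisfied in coordinates aligned with $T_yS$. Existence of a transition centre with $|u_\eps(z_\eps(y))|\leq 1-\tau$ follows from Corollary~\ref{some phi} (which forces positive $\eps$-density of $\{|u_\eps|<1-\tau\}$ near $\spt(\mu)$) combined with the H\"older estimate of Lemma~\ref{regularity lemma}. Smallness of $|\xi_\eps|(B(z_\eps(y),4L\eps))$ and of $\int(1-\nu_{\eps,n}^2)\d\mu_\eps$ on that ball, for a.e.\ $y$ and small $\eps$, follows from the varifold convergence $V_\eps\to V$ together with the vanishing of the discrepancy in total variation (cf.\ Proposition~4.9 of \cite{Roger:2006ta}), via a density/differentiation-of-measures argument at the $\eps$-scale. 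Applying Lemma~\ref{lemma local transition} (with $\tau$ taken small enough that $[-L,L]\supset q^{-1}([a_1,b_1])$), the normal slice $t\mapsto u_\eps(z_\eps(y)+t\nu_y)$ is $C^{0,1/4}$-close on $(-3L\eps,3L\eps)$ to $\pm q(t/\eps-t_1)$, giving
\[
\int_{-L\eps}^{L\eps}\phi\big(u_\eps(z_\eps(y)+t\nu_y)\big)\d t \;\geq\; \eps\int_{-L}^{L}\phi(\pm q(s-t_1))\d s - o(\eps) \;\geq\; c'\,\eps,
\]
with $c'>0$ uniform in $y$.

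Finally, for small $\eps$ the map $(y,t)\mapsto z_\eps(y)+t\nu_y$ embeds $S\times(-L\eps,L\eps)$ into $B(x,r)$ with Jacobian bounded below by a constant depending only on the $C^1$-geometry of $S$ (the $O(\eps)$-perturbation $y-z_\eps(y)$ does not spoil the tubular-neighbourhood structure), so Fubini yields
\[
\frac{1}{\eps}\int_{B(x,r)}\phi(u_\eps)\d x \;\geq\; \frac{C}{\eps}\int_S\int_{-L\eps}^{L\eps}\phi(u_\eps(z_\eps(y)+t\nu_y))\d t\d\H^{n-1}(y) \;\geq\; C\,c'\,\H^{n-1}(S)>0.
\]
The principal obstacle is the selection step: verifying discrepancy/tilt smallness at the $\eps$-scale together with the existence of a nearby transition centre $z_\eps(y)$, uniformly on an $\H^{n-1}$-full subset of $S$, combines varifold convergence at the macroscopic scale with pointwise density results at the $\eps$-scale in the spirit of the integrality proof in \cite{Roger:2006ta}.
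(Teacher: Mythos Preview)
Your overall strategy---parametrise a piece of $\spt(\mu)$, apply Lemma~\ref{lemma local transition} at each base point, and integrate the normal-slice contribution via Fubini---is plausible in spirit but contains two genuine gaps that the paper's proof is specifically designed to avoid.

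\emph{First gap: smallness at the $\eps$-scale does not follow from density arguments.} You assert that for $\H^{n-1}$-a.e.\ $y\in S$ and all small $\eps$, the quantities $(4L\eps)^{1-n}|\xi_\eps|(B(z_\eps(y),4L\eps))$ and $(4L\eps)^{1-n}\int_{B(z_\eps(y),4L\eps)}(1-\nu_{\eps,n}^2)\d\mu_\eps$ are below $\gamma$, invoking a ``density/differentiation-of-measures argument at the $\eps$-scale''. This is not a standard differentiation theorem: you are asking for control of a \emph{sequence} of measures $|\xi_\eps|$, $\mu_\eps$ on balls whose radius tends to zero with $\eps$, not of a fixed measure on shrinking balls. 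Varifold convergence and $|\xi_\eps|(\Omega)\to 0$ give macroscopic smallness only; the $\eps$-scale densities can be large at individual points. The paper does \emph{not} claim this holds a.e.; instead it shows via a Vitali covering argument that the bad sets $U_{\eps,\gamma,L}$, $V_{\eps,\gamma,L}$ where these densities exceed $\gamma$ have $\L^n$-measure $\leq \tfrac{\mu(B)}{4}\eps$, which suffices to find many good points.

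\emph{Second gap: the Fubini step is not justified.} Your selection $y\mapsto z_\eps(y)$ is made pointwise with no regularity or even measurability established. The map $(y,t)\mapsto z_\eps(y)+t\nu_y$ need not be injective (two base points at distance $\sim\eps$ may well share the same $z_\eps$), and the Jacobian bound requires control of $Dz_\eps$, which you do not have. The remark that an ``$O(\eps)$-perturbation does not spoil the tubular-neighbourhood structure'' is unwarranted for a merely bounded, possibly non-measurable perturbation.

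The paper circumvents both issues by replacing the continuous Fubini argument with a discrete one: after bounding the measure of the bad sets, it extracts $K\gtrsim\eps^{1-n}$ \emph{disjoint} balls $B(x_i,4L\eps)$ centred at good points, applies Lemma~\ref{lemma local transition} on each, uses H\"older continuity to get $\int_{B(x_i,3L\eps)}\phi(u_\eps)\d x\geq\bar\theta\,\eps^n$, and sums. This avoids any measurable selection or change-of-variables.
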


\begin{proof}
\textbf{Step 1.} As usual, we assume that $x=0$, $\mu(\partial B(x,r/2)) = 0$ and denote $B=B(x,r/2)$. This means that all the $\eps$-balls of positive integral we are going to find will actually lie in $B(x,r)$ and is a purely technical condition. Let $\zeta$ be a small constant to be specified later. For further use, denote by $\hat B$ the closed unit ball in $\R^{n-1}$. 

As $\mu$ is an integral varifold, we know that $\spt(\mu)$ is rectifiable. \stephan{This means by Lemma \ref{lemma structure rectifiable} that there are countably many $C^1$-functions $f_i:\hat B\to\R^n$ such that }
\[
\spt(\mu) \subset M_0 \cup \bigcup_{i=1}^\infty f_i(\hat B),\quad \H^{n-1}(M_0) = 0\:\quad f_i(\hat B)\cap f_j(\hat B) = \emptyset
\]
for $i\neq j$. Since $\mu$ has second integrable mean curvature $H_\mu^2\cdot\mu\leq\alpha$, we can further use the  Li-Yau inequality \mine{\eqref{Li-Yau}} to bound the maximum multiplicity of $\mu$ uniformly by
\[
\theta_{\max}\leq \frac{\alpha(\overline\Omega)}{16\,\pi} ,
\]
at least $\H^{n-1}$-almost everywhere. Now since $\H^{n-1}(\spt(\mu))< +\infty$ we can find $N\in\N$ such that 
\[
\H^{n-1}\left(\big(\spt(\mu)\cap B\big) \setminus\bigcup_{i=1}^Nf_i(\hat B) \right) < \frac{\zeta}{\,\theta_{\max}}.
\]
Since $f_i$ is injective and has non-vanishing tangent maps everywhere, $M:= \bigcup_{i=1}^Nf_i\left(\hat B^\circ\right)$ is a $C^1$-manifold. We observe that
\[
\H^{n-1}(\spt(\mu)\cap B\setminus M) < \frac{\zeta}{\theta_{\max}}
\]
and hence
\[
\mu\left(B\setminus M\right) < \zeta.
\]
Since the maps in question are smooth and the unit discs are orientable, for every $i$ we can pick a continuous unit normal field to $f_i(\hat B)$ (e.g.\ using cross products). Since the discs are compact and disjoint (thus a positive distance apart), the fields defined on each disc separately induce a continuous unit vector field on the union of their closures. 

Now we use the Tietze-Urysohn extension theorem to obtain a vector field $X$ on $B$ such that $X= \nu_M$ on $M$ and projecting on the unit ball we ensure $|X|\leq 1$. After an easy modification, we may assume that $|X|=1$ on a neighbourhood of $M$. We then define 
\[
G: \R^n\times G(n,n-1)\to \R,\qquad G(x,S) = \langle X_x, \nu_S\rangle^2
\]
where $\nu_S$ is one of the unit normals to $S$. Note that $G$ is continuous since $X$ is. Using the non-negativity of $G$ and the fact that $T_x\mu = T_xM$ for $\H^{n-1}$-almost every $x\in M\cap\spt(\mu)$ we interpret $\mu$ as dual to $C^0(\R^n\times G(n,n-1))$ and observe 
\begin{align*}
\langle \mu, G\rangle &= \int_{\spt(\mu)} \theta(x)\, G(x,T_x\mu)\d \H^{n-1}\\
	&\geq \int_{\spt(\mu)\cap M} \theta(x)\,G(x, T_xM)\d \H^{n-1} \\
	&= \int_{\spt(\mu)\cap M}\theta(x)\d \H^{n-1}\\
	&= \mu(M)\\
	&\geq \mu(B)-\zeta.
\end{align*}
\textbf{Step 2.} By varifold convergence, we know that $\lim_{\eps\to 0} \langle \mu_\eps, G\rangle = \langle \mu, G\rangle \geq \mu(B) - \zeta$, and $|X|, |\nu_\eps|\leq 1$ so
\begin{align*}
\limsup_{\eps\to 0}\int_B\big|1-\langle \nu_\eps,X\rangle^2\big|\d \mu_\eps 
	&= \limsup_{\eps\to 0}\int_B 1-\langle \nu_\eps,X\rangle^2 \d \mu_\eps\\
	&\leq \limsup_{\eps\to 0}\left(\mu_\eps(B) - \langle \mu_\eps, G\rangle \right) \\
	&\leq \zeta.
\end{align*} 
For $\gamma,\eps,L>0$ we define the set
\[
U_{\eps,\gamma,L} := \left\{x\in B\:\bigg|\:\frac{1}{(4L\eps)^{n-1}}\int_{B(x,4L\eps)}\big|1- \langle \nu_\eps, X\rangle^2\,\big| \d \mu_\eps >\gamma/4\right\}.
\]
Let $x_1,\dots, x_K$ be points in $U_{\eps,\gamma,L}$ being maximal for the property that the balls $B(x_i, 4L\eps)$ are disjoint. Then by definition
\begin{align*}
\zeta \geq \int_B\big|1-\langle \nu_\eps,X\rangle^2\big|\d \mu_\eps &\geq \sum_{i=1}^K \int_{B(x_i, 4L\eps)}\big|1-\langle \nu_\eps,X\rangle^2\big|\d \mu_\eps \geq K\,(4L\eps)^{n-1}\,\gamma/4.
\end{align*}
At the same time, we know that the balls $B(x_i, 8L\eps)$ cover $U_{\eps,\gamma,L}$ because otherwise we could bring in more disjoint balls, therefore
\begin{align*}
\frac{\L^n(U_{\eps,\gamma,L})}\eps &\leq K\frac{\,\omega_n\,(8L\eps)^{n}}\eps\\
	&\leq \frac{4\,\zeta}{\,\gamma\,(4L\eps)^{n-1}}\,\frac{\omega_n\,(8L\eps)^n}{\eps}\\
	&= 2^{n+4}\,\omega_n\,L\,\zeta/\gamma.
\end{align*}
For a given $\gamma$, we choose $\zeta = \zeta(\gamma)$ such that this is $\leq \mu(B)/4$. 

{\bf Step 3.} Knowing that $|\xi_\eps|(B)\to 0$, we can use the same argument as in the second step to show for
\[
V_{\eps,\gamma,L}:= \left\{x\in B\:\bigg|\:\frac{|\xi_\eps|\,(B(x,4L\eps))}{(4L\eps)^{n-1}}>\gamma/2\right\}
\]
the estimate
\[
\frac{\L^n(V_{\eps,\gamma,L})}\eps \leq \mu(B)/4
\]
for all sufficiently small $\eps>0$. 

{\bf Step 4.} Now choose $U$ \stephan{as a neighbourhood} of $M$ on which $|X|=1$ and $\tau>0$ like in Corollary \ref{some phi} satisfying 
\[
\liminf_{\eps\to 0} \mu_\eps\left(U\cap \{|u_\eps|\leq 1-\tau\}\right) \geq \frac{3\,\mu(B)}4.
\]
This is easily achieved when $\mu(M) > 3\,\mu(B)/4$. Furthermore we take $\delta\ll 1$ suitably small for small deviations of the optimal interface to behave similarly enough, $L$ and $\gamma$ as in Lemma \ref{lemma local transition} and $\zeta = \zeta(\gamma)$. Using steps one through three, we see that 
\begin{align*}
\liminf_{\eps\to 0}&\frac{\L^n\left(\{|u_\eps|\leq 1-\tau\}\cap U \setminus\left( U_{\eps,\gamma,L} \cup V_{\eps,\gamma,L}\right)\right)}\eps\\
	&\geq \liminf_{\eps\to 0}\frac{\L^n\left(\{|u_\eps|\leq 1-\tau\}\cap U\right)}\eps - \frac{\L^n\left(U_{\eps,\gamma,L}\right)}\eps - \frac{\L^n\left(V_{\eps,\gamma,L}\right)}\eps\\
	&\geq 3\,\mu(B)\,/4 - \mu(B)/4 - \mu(B)/4\\
	&=\mu(B)/4.
\end{align*}
\stephan{Using the reverse argument of step 2}, we can see that there are at least $K$ points $x_1,\dots,x_K$ in $\{|u_\eps|\leq 1-\tau\}\cap U\setminus\left( U_{\eps,\gamma,L} \cup V_{\eps,\gamma,L}\right)$ such that the balls $B(x_i, 4L\eps)$ are disjoint with 
\[
K\geq \frac{\mu(B)}{8^{n+1}\,L^n\,\eps^{n-1}}.
\]
{\bf Step 5.} To apply Lemma \ref{lemma local transition}, we must ``freeze'' the coefficients of the vector field $X$ to a single unit vector. We compute 
\begin{align*}
\frac1{(4L\eps)^{n-1}}&\bigg|\int_{B(x_i,4L\eps)}\left(1- \langle \nu_\eps, X\rangle^2\right) - \left(1 - \langle \nu_\eps, X_i\rangle^2\right) \:d\mu_\eps\bigg|\\
	&= \frac1{(4L\eps)^{n-1}}\bigg|\int_{B(x_i,4L\eps)} \langle \nu_\eps, X_i\rangle^2 -  \langle \nu_\eps, X\rangle^2 \:d\mu_\eps\bigg|\\
	&= \frac1{(4L\eps)^{n-1}}\bigg|\int_{B(x_i,4L\eps)} \langle \nu_\eps, X_i - X\rangle\,\langle \nu_\eps, X_i+X\rangle \:d\mu_\eps\bigg|\\
	&\leq |X_i + X|_{C^0(B(x_i,4L\eps))} \cdot |X_i - X|_{ C^0(B(x_i,4L\eps))}\frac1{(4L\eps)^{n-1}}\cdot \int_{B(x_i,4L\eps)}\:d\mu_\eps\\
	&\leq 2\,C_{\bar\alpha, L, n} \, |X_i - X|_{ C^0(B(x_i,4L\eps))}
\end{align*}
for all $X_i$ such that $|X_i|\leq 1$. When we set $X_i = X(x_i)$, the last term converges to zero -- so eventually it is smaller than $\gamma/4$ and 
\[
\frac1{(4L\eps)^{n-1}} \int_{B(x_i, 4L\eps)} 1- \langle \nu_\eps, X_i\rangle^2\:d\mu_\eps < \gamma/2.
\]
Since $x_i\in U$, we finally see that $|X_i|=1$ and Lemma \ref{lemma local transition} can be applied. 

{\bf Step 6.} Since $u_\eps$ is $C^{0,1/4}$-close to a one-dimensional optimal profile on $B(x_i, 3L\eps)$ which transitions from $-1$ to $1$, we see that for each $s\in (-(1-\tau), (1-\tau))$ there must be a point $y_i \in B(x_i, 3L\eps)$ such that $u_\eps(y_i) = s$. By H\"older continuity, we deduce that
\[
\int_{B(x_i, 3L\eps)} \phi(u_\eps)\:dx \geq \bar \theta \,\eps^n
\]
for a constant $\bar \theta$ depending on the support of $\phi$ and on $\bar \alpha, n$ for the H\"older constant. Since the balls are disjoint by construction, we can add this up to 
\begin{align*}
\frac1\eps\int_{B}\phi(u_\eps)\:dx &\geq \frac1\eps\,\sum_{j=1}^M\int_{B(x_i, 3L\eps)} \phi(u_\eps)\:dx\\
	&\geq \frac1\eps\,M\,\bar\theta\,\eps^{n}\\
	&\geq \frac{\mu(B)\,\bar\theta}{\:8^{n+1}\,L^n\:}\\
	&>0.
\end{align*}
This concludes the proof.
\end{proof}

We need one final result from geometric measure theory before we move on to our main results. 

\mine{
\begin{lemma}\label{lemma measure support}
Let $(X,d)$ be a metric space and $K\subset X$ is compact. If $K$ is not connected, then there exist two open sets $U_1, U_2\subset X$ such that
\[
K\subset U_1\cup U_2, \qquad K\cap U_i\neq \emptyset\text{ for } i=1,2\qquad\text{and}\qquad \dist(U_1,U_2)>0.
\]
\end{lemma}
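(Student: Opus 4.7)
The plan is to use the fact that in a metric space, disjoint compact sets are separated by a positive distance, and then to fatten them up slightly to obtain the required open neighbourhoods.

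First, since $K$ is disconnected, by definition of connectedness we can write $K = K_1\cup K_2$ where $K_1,K_2$ are non-empty, disjoint, and both are relatively closed in $K$. Since $K$ is compact and closed subsets of compact sets are compact (with respect to the subspace topology, which coincides with the metric topology), both $K_1$ and $K_2$ are compact.

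Next, I would show that $\delta := \dist(K_1,K_2)>0$. This follows from the standard argument that the map $(x,y)\mapsto d(x,y)$ is continuous on the compact set $K_1\times K_2$, hence attains its infimum at some pair $(x_0,y_0)$ with $x_0\in K_1, y_0\in K_2$. Since $K_1\cap K_2=\emptyset$, we have $d(x_0,y_0)>0$.

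Finally, define the $\delta/3$-neighbourhoods
\[
U_i := \{x\in X\::\:\dist(x,K_i)<\delta/3\},\qquad i=1,2.
\]
Each $U_i$ is open as a sublevel set of the continuous function $\dist(\cdot,K_i)$, and clearly $K_i\subset U_i$, so $K\subset U_1\cup U_2$ and $K\cap U_i\supset K_i\neq\emptyset$. For arbitrary $x\in U_1$, $y\in U_2$, the triangle inequality combined with $\dist(K_1,K_2)=\delta$ yields
\[
d(x,y)\geq \dist(K_1,K_2) - \dist(x,K_1) - \dist(y,K_2) > \delta - \delta/3 - \delta/3 = \delta/3,
\]
so $\dist(U_1,U_2)\geq \delta/3>0$.

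There is no real obstacle here, as the result is essentially folklore in point-set topology of metric spaces; the only mildly non-trivial input is the attainment of the infimum of the distance on the compact product $K_1\times K_2$, which is standard.
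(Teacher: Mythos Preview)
Your proof is correct and follows essentially the same approach as the paper: split $K$ into two compact pieces via the definition of disconnectedness, use compactness to get $\delta=\dist(K_1,K_2)>0$, and take the $\delta/3$-neighbourhoods as $U_1,U_2$. Your write-up is in fact slightly more detailed than the paper's, which omits the explicit triangle-inequality verification that $\dist(U_1,U_2)\geq\delta/3$.
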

}

\mine{
\begin{proof}
Assume that $\spt(\mu)$ is not connected. Then there exist relatively open non-empty sets $W_1, W_2\subset K$ such that 
\[
K = W_1\cup W_2.
\]
By definition of the subspace topology, $W_1$ and $W_2$ are also relatively closed. Since $K$ is compact, they are even compact, so $\delta:= \dist(W_1, W_2)>0$. We set $U_1= \{\dist(\cdot, W_1)< \delta/3\}$ and $U_2= \{\dist (\cdot, W_2)<\delta/3\}$.
\end{proof}
}

\mine{
If $K$ is the support of a measure $\mu$, we can pick $x_i\in W_i$ for $i=1,2$ since neither set is empty. As $U_i$ is a neighbourhood of $x_i$, we see directly from the definition of the support of a measure that $\mu(U_i)>0$ for $i=1,2$.
}

\subsection{Proof of the Main Results}\label{section 3 phase field}

Having dealt with the necessary auxiliary results, we can proceed to prove our main results. We use the terminology of Lemma \ref{minimisation lemma}.

\begin{proof}[Proof of Theorem~\ref{theorem n=2}]
\ton{Compare also \cite[Lemmas 3.1, 3.2 and Proposition 4.2]{nagase2007singular}.}
Let $\Omega'\cc \R^n\setminus\spt(\mu)$ and $\tau>0$. Assume there is a sequence $x_\eps$ in $\Omega'$ such that $|u_\eps(x_\eps)|\leq 1-\tau$ or $|u_\eps(x_\eps)|\geq 1+\tau$. By definition, $x_\eps\in \Omega$ and using compactness, there is $x\in \overline{\Omega' \cap\Omega}$ such that $x_\eps\to x$.

Let $r>0$ such that $B(x,3r)\subset \R^n\setminus\spt(\mu)$. Due to convergence, $B(x_\eps,r)\subset B(x,2r)$ for all sufficiently small $\eps>0$. If we use the rescaling property from Lemma \ref{rescaling lemma} and the minimisation property of Lemma \ref{minimisation lemma} for $n=2$, we see that
\begin{align*}
\alpha(\overline{B(x,2r)}) &\geq \frac1r\,\liminf_{\eps\to 0} \left(r\,\alpha_\eps(B(x_\eps,r)) + r^{-1}\,\mu_\eps(B(x_\eps,r))\right) \\
	&\geq \frac1r\,\liminf_{\hat \eps\to 0}\inf_{u\in Y^2}\F_{\hat \eps}(u)\\
	&\geq \theta_0/r
\end{align*}
with $\hat \eps=\eps/r$ as in Lemma \ref{rescaling lemma}. Letting $r\to 0$, we obtain a contradiction.
\end{proof}

If $n=3$, neither the rescaling property of Lemma \ref{rescaling lemma} nor the minimisation property of Lemma \ref{minimisation lemma} hold in as strong formulations. As we sketched in Remark \ref{remark no uniform}, uniform convergence is, in fact, false. 

\begin{proof}[Proof of Theorem \ref{Hausdorff n=2}:]
We assume that $u_\eps\to u$ strongly in $L^1(\Omega)$ and that $\mu_\eps\stackrel*\wto \mu$ for our sequence or a suitable subsequence (not relabelled). Let $D\cc (-1,1)$, then $u_\eps^{-1}(D)\cc \Omega$. We can consider $\overline{u_\eps^{-1}(D)}$ instead without changing the Hausdorff limit to conform with standard approaches. By the usual compactness results (see e.g.\ \cite[Theorem 1.6.6]{krantz2008geometric}), there is a compact set $K$ such that a further subsequence of $u_\eps^{-1}(D)$ converges to $K$ in Hausdorff distance. $K$ can be computed as the Kuratowski lower limit 
\[
K= \{x\in\R^n\:|\:\exists\ x_\eps\in u_\eps^{-1}(D)\text{ such that }x_\eps\to x\}.
\]

{\bf Step 1.} We first show that $K\subset \spt(\mu)$ if $n=2$. Assume $x\in K\setminus\spt(\mu)$. Then there exists $r>0$ such that $B(x,r)\cc \R^2\setminus\spt(\mu)$. This means already that $|u_\eps|\to 1$ uniformly on $B(x,r)$ showing that $u_\eps^{-1}(D)\cap B(x,r) = \emptyset$ for small enough $\eps$, contradicting our assumption.

{\bf Step 2.} Now we prove that $\spt(\mu)\subset K$ in $n=2,3$ dimensions. It suffices to show that for $x\in \spt(\mu), s\in D$ and $r>0$ we have $u_\eps^{-1}(s)\cap B(x,r)\neq \emptyset$ for all sufficiently small $\eps$, which implies Hausdorff convergence. 

This is an easier version of the proof of Lemma \ref{lemma phi sees mu}. Again, we use Corollary \ref{some phi} to show that we have a point $y\in B(x,r/2)$ to use Lemma \ref{lemma local transition} on and then Lemma \ref{lemma local transition} to see that we get points in the pre-image of $s$ close to $y$ since $u_\eps$ is $C^0$-close to an optimal interface in $B(y, 3L\eps)$. 

If $n=2$, the uniqueness of the limit also shows convergence for the whole sequence.
\end{proof}

\mine{
We will now proof our main statement about connectedness.
}

\begin{proof}[Proof of Theorem \ref{theorem ceps}]
The proof is structured as follows. First, we show that we can find neighbourhoods of connected components which have positive distance with respect to the usual metric on $\R^n$. Then we need to show that they also have positive distance with respect to the pseudometric $d^{F(u_\eps)}$. Intuitively, this makes sense since any connecting curve should have to leave the interfacial layer between the two sets. This is simple if $n=2$ and slightly more technical if $n=3$. We proceed in the spirit of Theorem \ref{theorem n=3} and show that $u_\eps$ converges to $\pm 1$ even on curves.

Without loss of generality, we may assume that there are $ -1<\theta_1<\theta_2<1$ such that $\{\phi>0\}\subset (\theta_1,\theta_2)$ and $F\geq 1$ outside $(\theta_1,\theta_2)$. This is only a minor assumption and could easily be removed, but simplifies the proof.

{\bf Step 1.} Assume that $\spt(\mu)$ is not connected. Since $\spt(\mu)$ is compact, according to Lemma \ref{lemma measure support} there are disjoint open sets $U_1, U_2$ such that
\[
\spt(\mu) \subset U_1\cup U_2, \qquad \mu(U_i)>0, i=1,2, \qquad\delta:= \dist(U_1,U_2) >0.
\]
Now
\begin{align*}
\liminf_{\eps\to 0}C_\eps(u_\eps) &\geq \liminf_{\eps\to 0}\int_{U_1}\phi(u_\eps(x))\d x \cdot \liminf_{\eps\to 0} \int_{U_2}\phi(u_\eps(y))\d y\\
	&\qquad\qquad\cdot \liminf_{\eps\to 0}\dist^{F(u_\eps)}(U_1,U_2).
\end{align*}
Since the first two factors are strictly positive according to Lemma \ref{lemma phi sees mu}, it suffices to show that $\liminf_{\eps\to 0}\dist^{F(u_\eps)}(U_1, U_2)>0$.

{\bf Step 2.} For a contradiction, assume that $\dist^{F(u_\eps)}(U_1, U_2) \to 0$. Pick a sequence $c_\eps$ such that $c_\eps \to 0$ but still $\frac{\dist^{F(u_\eps)}(U_1, U_2)}{c_\eps} \to 0$. Then there exist a connected set $K_\eps$ and points $x_\eps, y_\eps\in \overline\Omega$ such that
\[
x_\eps \in K_\eps \cap \partial U_1, \qquad y_\eps \in K_\eps\cap \partial U_2, \qquad \int_{K_\eps}F(u_\eps)\d\H^1 \leq c_\eps.
\]
If $n=2$, we know that $|u_\eps|\to 1$ uniformly on $\Omega\setminus (U_1\cup U_2)$, so in particular $u_\eps\notin [\theta_1,\theta_2]$ on $K_\eps\subset \Omega\setminus (U_1\cup U_2)$ and
\[
\int_{K_\eps} F(u_\eps) \d\H^1 \geq \H^1(K_\eps \setminus (U_1\cup U_2)) \geq \delta >0
\]
since $K_\eps$ connects $U_1$ to $U_2$. This is a contradiction to our assumption. In the case $n=3$ we need a further argument.

{\bf Step 3.} In this step, we will use the competition between the distance function driving $u_\eps$ away from $\pm 1$ along $K_\eps$ and the energy bounds in three dimensions.

Precisely, knowing that the $F(u_\eps)$-weighted length of $K_\eps$ is small, we will construct a set of $N_\eps\gg 1$ points $x_{i, \eps}$ on $K_\eps$ such that $u_\eps(x_{i,\eps}) \in [\theta_1, \theta_2]$ for all $i=1, \dots N$ and such that the balls of radius $\eps^{1/2}$ around these points are disjoint. The large number of balls implies that one of them has to satisfy the conditions of Lemma \ref{minimisation lemma}. This allows us to show that $\alpha$ must have an atom of fixed minimal size close to this point.

A more careful construction allows us to localise the argument on short segments of $K_\eps$ such that we may construct any finite number of atoms with the same fixed minimal size. This gives the desired contradiction.

Take a subsequence realising the $\liminf$. The $1$-Lipschitz map $\pi(x):= \dist(x, U_1)$ maps $K_\eps$ to a connected set containing $0 = \pi(x_\eps)$ and $\delta = \pi(y_\eps)$, so $[0, \delta]\subset \pi(K_\eps)$. Furthermore, $\pi^{-1}(0,\delta)\subset \R^2\setminus (U_1\cup U_2)$ since $\dist(U_1,U_2) = \delta$. Take the set 
\[
K_\eps' := \left\{t\in [0, \delta]\::\:\exists\ x\in K_\eps\text{ such that }t = \pi(x)\text{ and }u_\eps(x)\notin [\theta_1,\theta_2]\right\}
\]
of points whose pre-image contributes a lot to the weighted length of $K_\eps$. Then
\begin{align*}
\H^1(K_\eps') &\leq \H^1\left(K_\eps \cap \{u_\eps\notin [\theta_1,\theta_2]\}\right)\\
	&\leq \int_{K_\eps}F(u_\eps)\d\H^1\\
	&\leq c_\eps.
\end{align*}
Pick $M$ intervals 
\[
I_k = \left[\frac{2k-1}{2M}\,\delta, \:\frac {k}M\,\delta\right]
\]
inside $[0,\delta]$. Fix $1\leq k\leq M$. When $\eps$ is so small that $c_\eps < \frac\delta{4M}$, we deduce that
\begin{equation}
\H^1(I_k\setminus K_\eps') \geq \H^1(I_k) - \H^1(K_\eps') \geq \frac{\delta}{2M} - \frac{\delta}{4M} = \frac\delta{4M}.
\end{equation}
Take a maximal collection $t_{1,\eps},\dots,t_{N_\eps,\eps}$ in $I_k$ with the property that the balls $B(t_{i,\eps}, \eps^{1/2})$ are mutually disjoint. Then $N_\eps \geq \frac{\delta}{16M\eps^{1/2}}-1$ since otherwise
\[
I_k \setminus K_\eps' \subset \bigcup_{i=1}^{N_\eps} B(t_{i,\eps}, 2\eps^{1/2})\qquad\Ra\quad \H^1(I_k\setminus K_\eps') \leq N_\eps\cdot 4\eps^{1/2} \leq \left(\frac{\delta}{16M\eps^{1/2}}-1\right)\cdot 4\eps^{1/2} < \frac{\delta}{4M}. 
\]
For each $t_{i,\eps}$ we pick $x_{i,\eps} \in K_\eps \cap \pi^{-1}(t_{i,\eps})$ which satisfies by construction $u_\eps(x_{i,\eps})\in [\theta_1,\theta_2]$. Since the projection map is contracting, also the balls $B(x_{i,\eps}, \eps^{1/2})$ are mutually disjoint. Fix any $0<\eta<1/2$. Now assume for a contradiction that all points $x_{i,\eps}$ satisfy 
\[
\alpha_\eps(B(x_{i,\eps},\eps^{1/2})) + \int_{B(x_{i,\eps},\eps^{1/2}) \cap\{|u_\eps|>1\}} \frac{W'(u_\eps)^2}{\eps^3}\d x \geq \eps^\eta
\]
for all $i= 1, \dots, N_\eps$. Thus
\[
2\alpha_\eps(\Omega) \geq \alpha_\eps(\Omega) + \int_{\{|u_\eps|>1\}} \frac{W'(u_\eps)^2}{\eps^3}\dx \geq N_\eps \eps^\eta \geq \frac{\delta}{16M } \,\eps^{\eta-1/2} - \eps^\eta \sim \eps^{\eta-1/2}
\]
due to \eqref{eq W'3}. This grows beyond any bound as $\eps\to 0$, so there must by $1\leq i_\eps \leq N_\eps$ such that 
\[
\alpha_\eps(B(x_{i,\eps},\eps^{1/2})) + \int_{B(x_{i,\eps},\eps^{1/2}) \cap\{|u_\eps|>1\}} \frac{W'(u_\eps)^2}{\eps^3}\d x \leq \eps^\eta.
\]
Using compactness and a subsequence, there exists $\overline x^k \in \pi^{-1}(I_k)$ such that $x_{i_\eps, \eps}\to \overline x^k$. We can achieve this simultaneously for all $k=1,\dots,M$. The balls $B(\overline x^k, r)$ are disjoint for $0<r<\frac{\delta}{4M}$ since the intervals $I_k$ have distance $\delta/2M$ and $\mu(B(\bar x^k, r)) = 0$ since 
\[
\dist(\bar x_k, \spt(\mu)) \geq \dist(\overline{\Omega\setminus U_1\cup U_2}, \spt(\mu)) \geq \delta.
\]
Fix such $r$ and consider $\hat \eps = \eps/(2r)$ and
\[
\hat u_\eps:B(0,1)\to \R, \qquad \hat u_\eps (y) = u_\eps\left(\frac{r}2\cdot y + x_{i,\eps}\right)
\]
with $\hat u_\eps(0) \in[\theta_1,\theta_2]$. We use the terminology of Lemma \ref{rescaling lemma}. The associated measures $\hat \alpha_\eps$ satisfy 
\[
\hat \alpha_\eps(B(0,1)) + \int_{B(0,1) \cap\{|\hat u_\eps|>1\}} \frac{W'(\hat u_\eps)^2}{\hat \eps^3}\d x \leq \hat\eps^\eta
\]
so by Lemma \ref{minimisation lemma} 
\begin{align*}
\alpha(B(\bar x^k,r)) &\geq \liminf_{\eps\to 0} \alpha_\eps(B(\bar x^k,3r/4))\\
	&\geq \liminf_{\eps\to 0} \alpha_\eps(B(x_{i_\eps,\eps},r/2))\\
	&=  \liminf_{\eps\to 0} \left[\alpha_\eps(B(x_{i_\eps,\eps},r/2)) + (r/2)^{1-n}\,\mu_\eps(B(x_{i_\eps,\eps}, r/2))\right]\\
	&= \liminf_{\eps\to 0} (\hat\mu_\eps + \hat \alpha_\eps)(B(0,1))\\
	 &\geq \theta_0.
\end{align*}
Taking $r\to 0$, we see that $\alpha(\{\bar x^k\})\geq \theta_0$. So in total, $\alpha(\overline\Omega) \geq M\theta_0$ for all $M\in\N$ where $\theta_0$ depends only on $\theta_1$ and $\theta_2$. Thus we have reached the desired contradiction.
 \end{proof}

\mine{Now Theorem  \ref{main theorem} is an obvious consequence of Theorem \ref{theorem ceps}. }

\begin{proof}[Proof of Theorem \ref{main theorem}:]
Let $u_\eps$ be a sequence such that $\E_\eps(u_\eps)$ is bounded. Then in particular $|\mu_\eps(\Omega)-S|\leq \eps^{\sigma/2}$, so $\mu_\eps(\R^n) = \mu_\eps(\Omega)$ is bounded and $\mu_\eps\wto \mu$ for some Radon measure $\mu$ -- for this and other properties see \cite[Chapter 1]{Evans:1992uy}. Clearly
\[
\mu(\overline\Omega)\geq\limsup_{\eps\to 0}\mu_\eps(\overline\Omega) = S
\]
and on the other hand
\[
\mu(\overline\Omega) \leq \mu(\R^n) \leq \liminf_{\eps\to 0}\mu_\eps(\R^n) = S
\]
so $\mu(\overline\Omega) = S$. If $U =   \R^n\setminus \overline \Omega$, we have
\[
\mu(U) \leq \liminf_{\eps\to 0}\mu_\eps(U) = 0,
\]
so $\spt(\mu) = \bigcap_{U\text{ open}, \mu(U) = 0}U^c\subset\overline\Omega$. \mine{Since $\E_\eps(u_\eps)$ is bounded, we have $C_\eps(u_\eps)\to 0$, so due to Theorem \ref{theorem ceps}, $\spt(\mu)$ is connected.} 
\end{proof}

\mine{
Finally, the same arguments give convergence $\H^1$-a.e.\ on fixed curves.
\begin{proof}[Proof of Theorem \ref{theorem n=3}]
This proof is a simpler version of that of Theorem \ref{theorem ceps}. If $|u_\eps|\not \to 1$ $\H^1$-almost everywhere on $K$, there is a set
\[
K_s':= \{x\in K\:|\:\limsup_{\eps\to 0} \big|\,|u_\eps|-1\,\big|\geq s\}
\]
with $\H^1(K_s')>0$ for some small $s>0$. We do not need the projection onto the first coordinate and can directly define $x_{i,\eps}$ in the fixed set $K_\eps\equiv K_s'$ without passing through $t_{i,\eps}$, the rest works analogously.
\end{proof}
}

\section{Computer Implementation}\label{section computer}

In a computer simulation, we try to find local minimisers of the $\eps$-problem in $n=2$ dimensions\stephan{. In the implementation, we follow a finite element approximation of the} time-normalised $L^2$-gradient flow of $\E_\eps$ with $\eps = 1.5\cdot 10^{-2}$, $\kappa =1$, $\sigma = 2$. The pictures below \stephan{were obtained on a triangular mesh with elements of diameters approximately $h = 6\cdot 10^{-3}$ and approximately $250.000$ $H^2$-conforming subdivision surface basis functions supported in the two-ring around an element. Time-stepping was semi-implicit with a time-step of $\tau = \eps\cdot 10^{-5}$.} The distance function $d^{F(u_\eps)}$ and its gradient are implemented via Dijkstra's algorithm in a fashion similar to the one of \cite{benmansour:2010dm}. The initial condition is the same for all simulations and can be seen in Figure~\ref{fig:nopen} on the left; the domain is the disc of radius $1$. There was no penalisation of the discrepancy measure in our simulations.  

For practical purposes, we use two functions $\phi_1,\phi_2$ with support close to $1$ and $-1$, respectively, rather than just one $\phi$. Quite obviously, our proofs easily extend to that situation. By keeping level sets close to the edges connected, we create barriers that prevent the interface from splitting apart early in the process. The implementation will be described in greater detail in a forth-coming article~\cite{DW_numerical}.

We see in Figure~\ref{fig:nopen} that without the inclusion of the topological term, the transition layer disintegrates into several connected components along the gradient flow of $\W_\eps + \eps^{-\sigma}\,(S_\eps -S)^2$.

\begin{figure}[ht!]\begin{center}
\includegraphics[height =4 cm, clip = true, trim = 110cm 0cm 12cm 0cm]{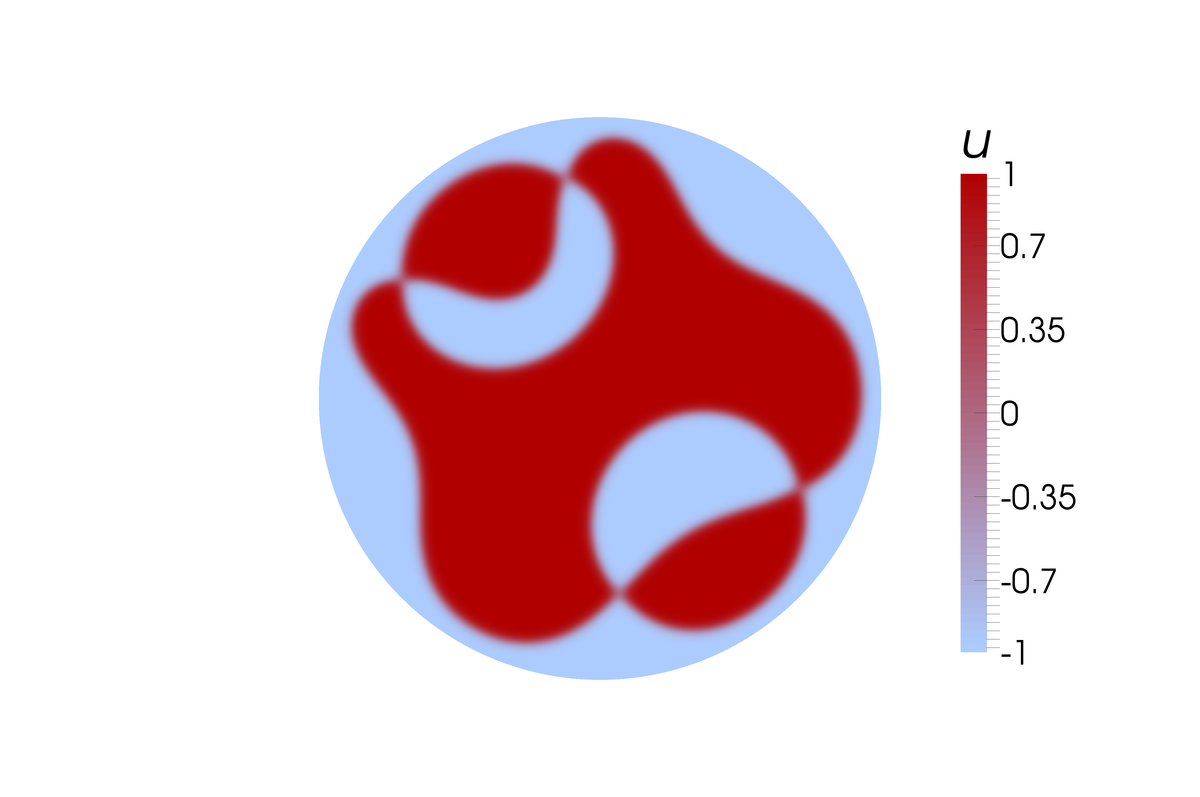}
\includegraphics[height =4 cm, clip = true, trim = 35cm 0cm 35cm 0cm]{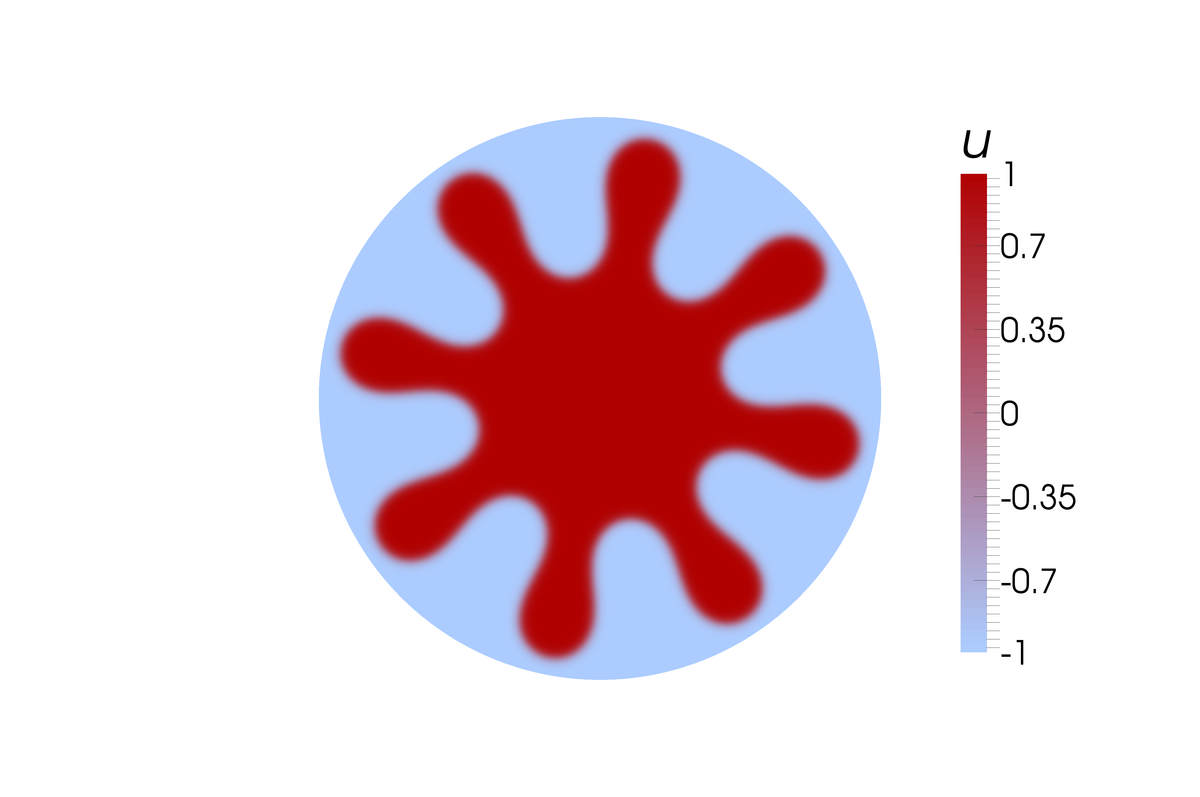}\hspace{0.2cm}
\includegraphics[height =4 cm, clip = true, trim = 35cm 0cm 35cm 0cm]{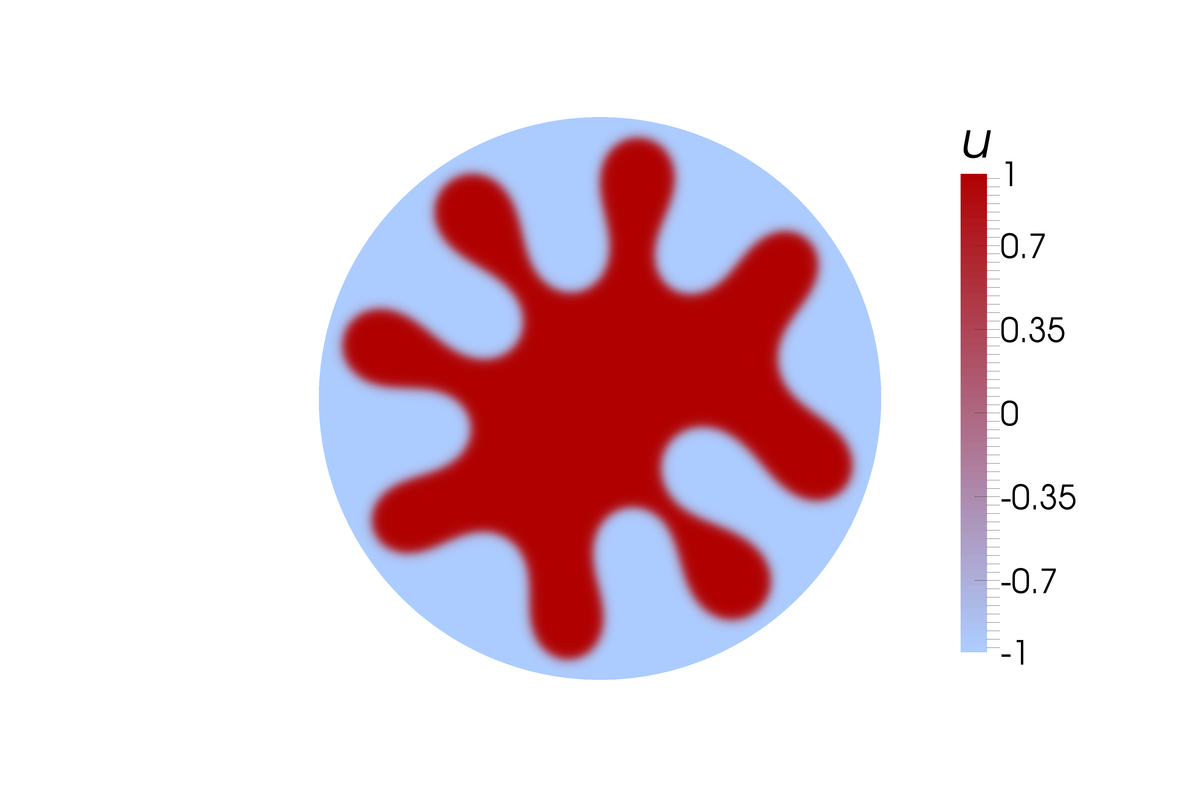}\hspace{0.2cm}
\includegraphics[height =4 cm, clip = true, trim = 35cm 0cm 35cm 0cm]{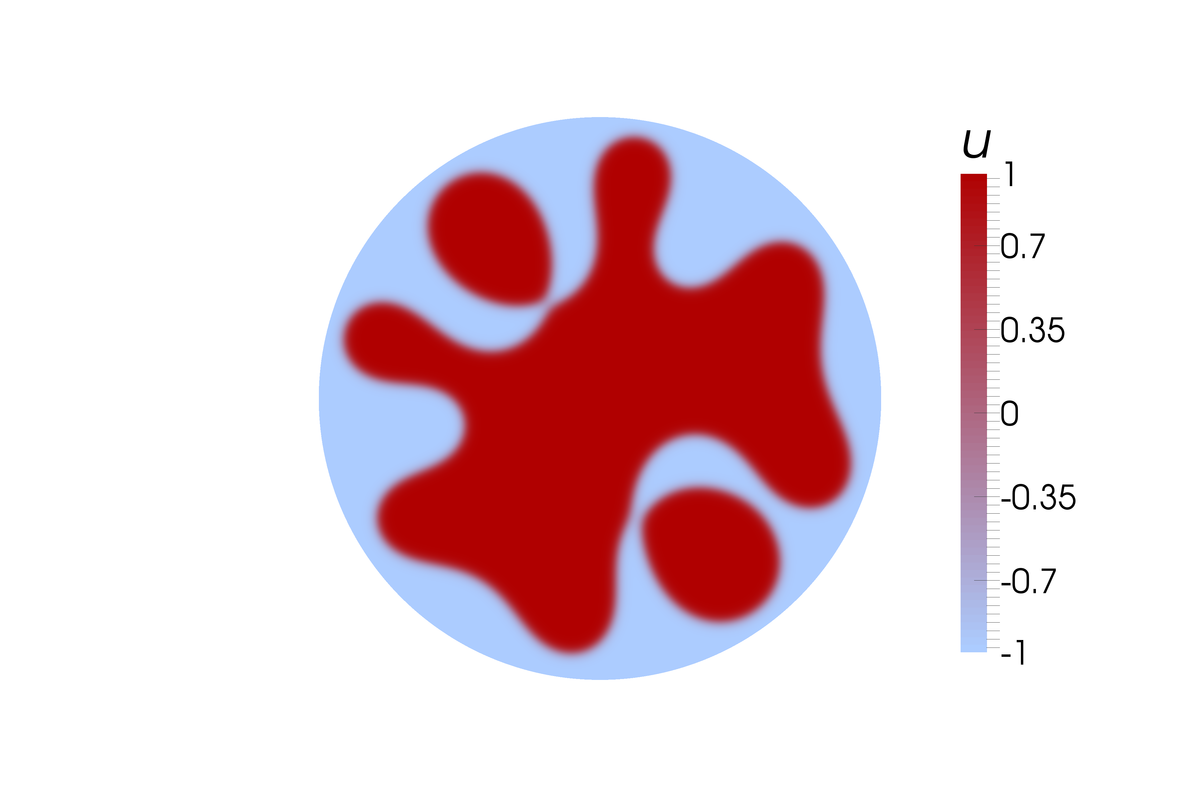}\hspace{0.2cm}
\includegraphics[height =4 cm, clip = true, trim = 35cm 0cm 12cm 0cm]{300w}
\caption{Gradient flow of $\W_\eps + \eps^{-\sigma}\,(S_\eps -S)^2$. From left to right: phase field $u$ for approximately $t=7.5\cdot 10^{-5}$, $t=3\cdot 10^{-4}$, $t=7.5\cdot 10^{-4}$ and $t=1.8\cdot 10^{-3}$.} \label{fig:nopen}
\end{center}
\end{figure}

To compare implementations of topological side conditions, we include the topological term suggested in \cite{Dondl:2011eh}, which penalises a deviation of a diffuse signed curvature integral from $2\pi$ in the simulation. This term prevents the initial pinch-off, but at a later time, the interface will pinch off in a more complicated way which keeps the diffuse winding number close to $2\pi$. The \stephan{phenomenon is a} simultaneous pinch off at several points as seen in Figure~\ref{fig:windingpen}. The far right plot in Figure~\ref{fig:windingpen} illustrates the diffuse curvature density as distributed along the curve at pinch off time. We can observe the formation of a circle with negative total curvature $\approx -2\pi$ (due to the phase field switching in the other direction from $+1$ to $-1$), and two components with total curvature $\approx 2\pi$ so that the total curvature of the whole interface stays close to $2\pi$. 

\begin{figure}[ht!]
\begin{center}
\includegraphics[height =4 cm, clip = true, trim = 110cm 0cm 12cm 0cm]{300w}
\includegraphics[height =4 cm, clip = true, trim = 35cm 0cm 35cm 0cm]{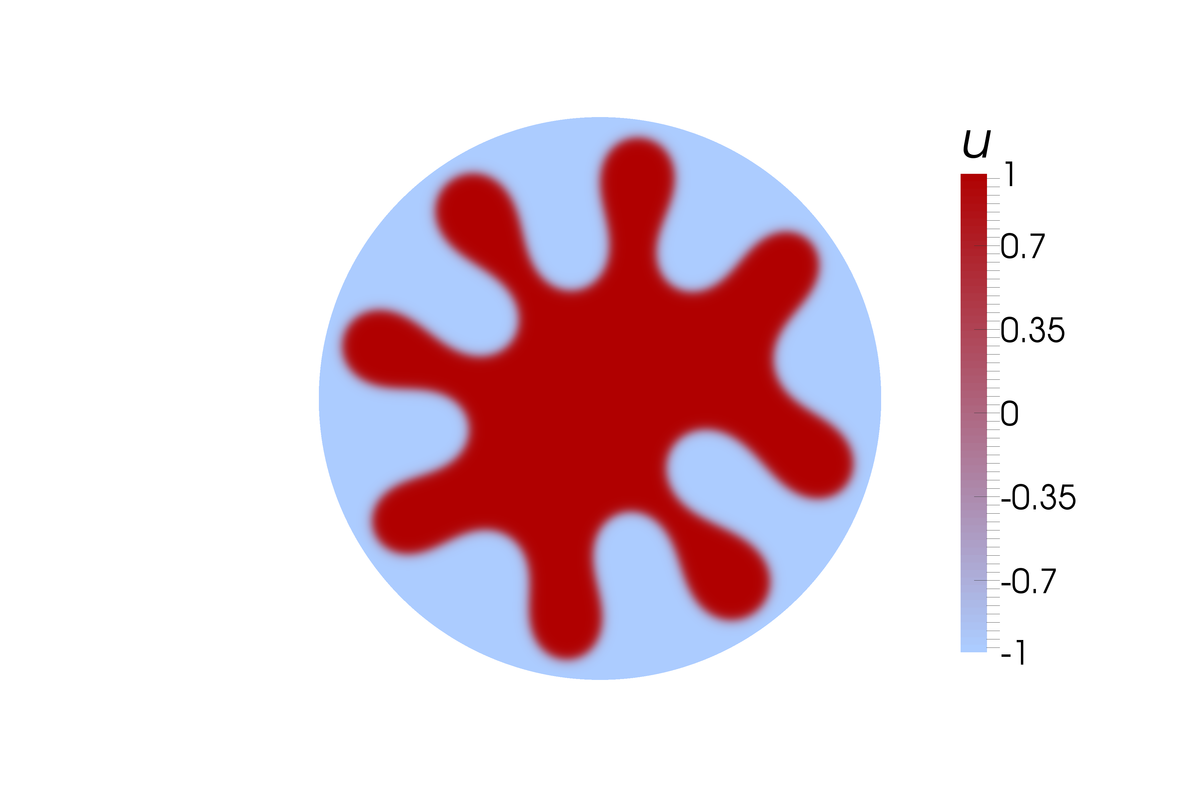}\hspace{0.2cm}
\includegraphics[height =4 cm, clip = true, trim = 35cm 0cm 35cm 0cm]{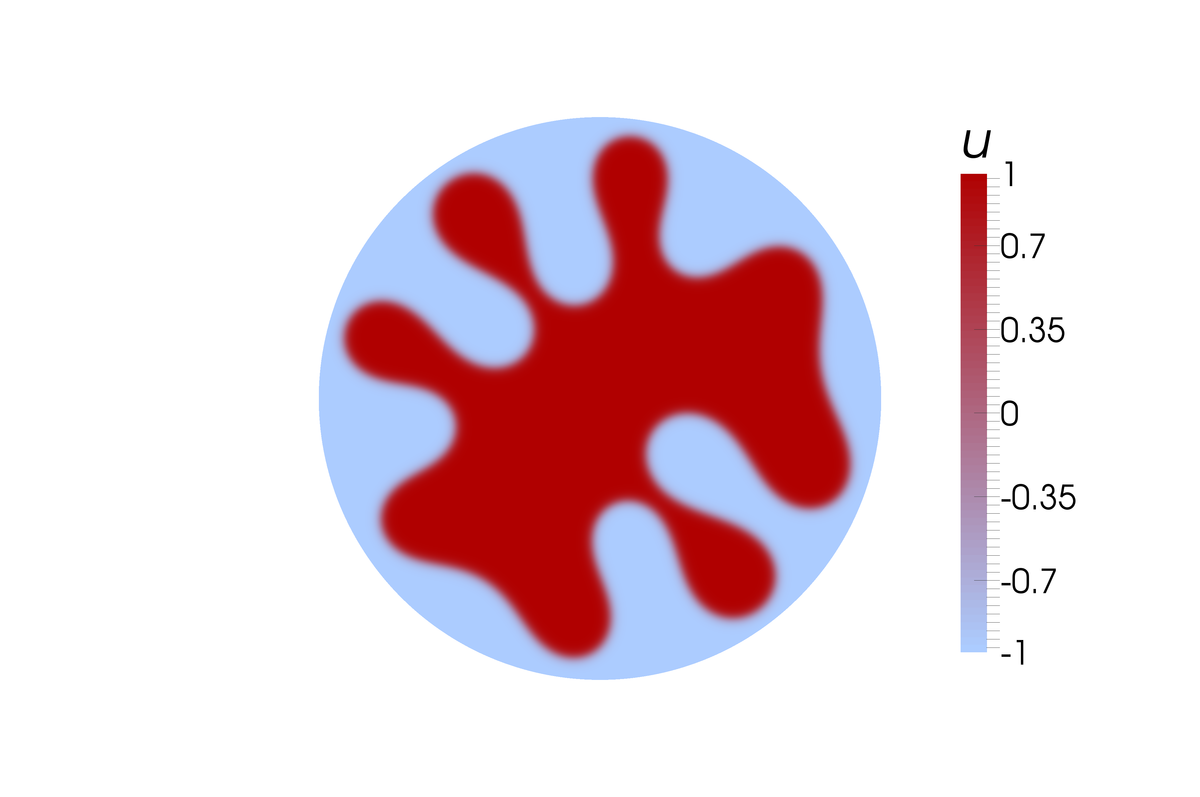}\hspace{0.2cm}
\includegraphics[height =4 cm, clip = true, trim = 35cm 0cm 35cm 0cm]{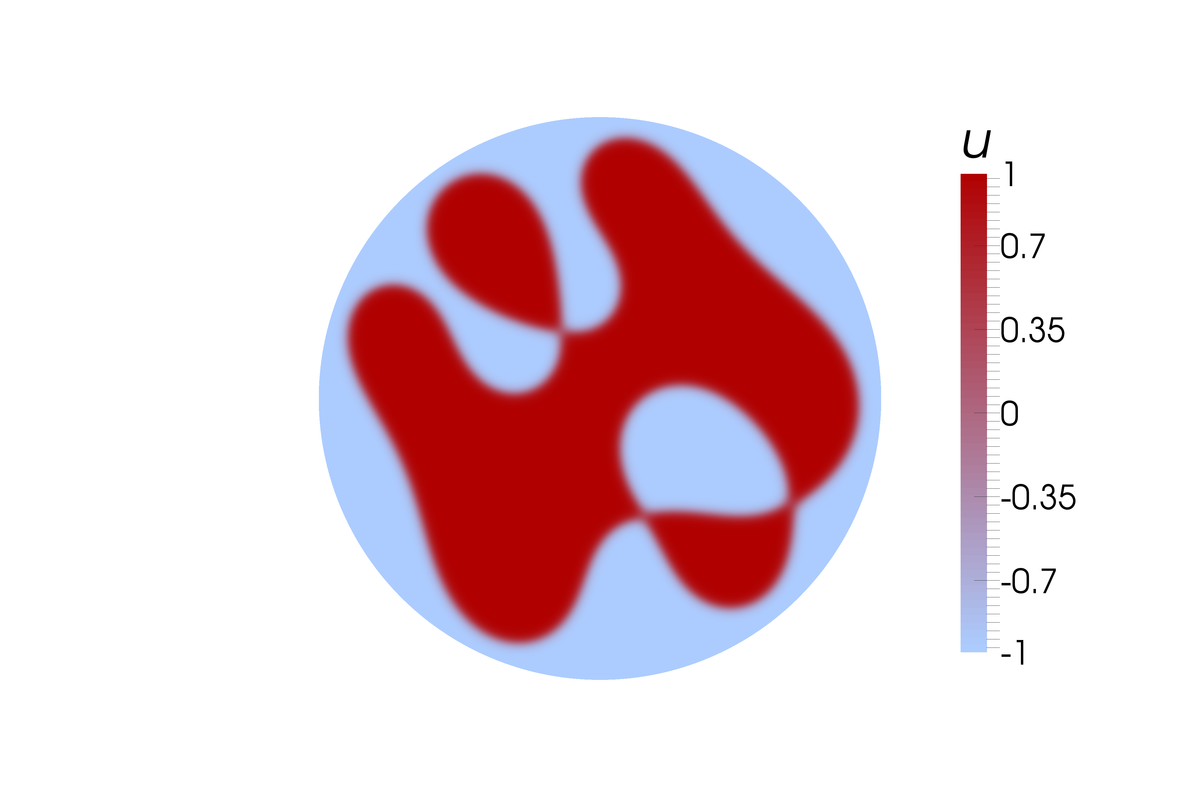}\hspace{0.2cm}
\includegraphics[height =4 cm, clip = true, trim = 35cm 0cm 12cm 0cm]{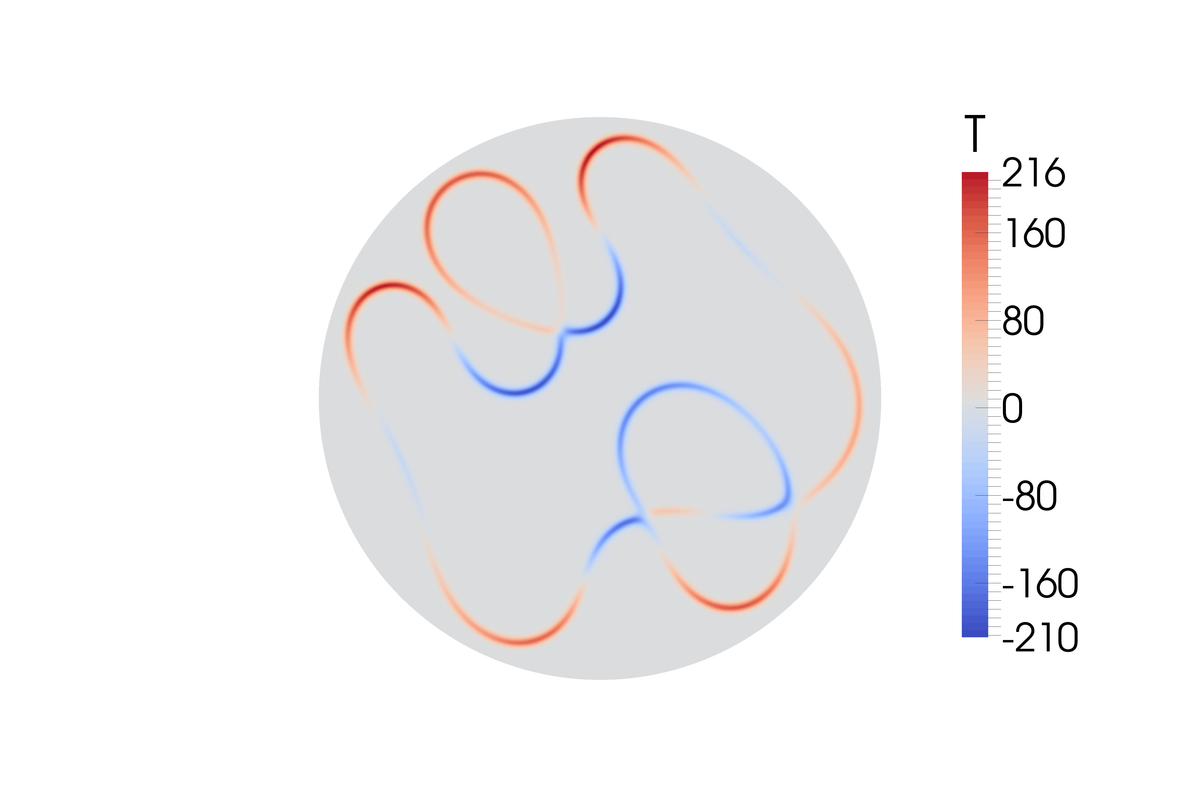}
\caption{Gradient flow with penalty on a diffuse winding number as suggested in~\cite{Dondl:2011eh}. From left to right: phase field $u$ for approximately $t=3\cdot 10^{-4}$, $t=7.5\cdot 10^{-4}$ and $t=1.8\cdot 10^{-3}$, then a plot of the diffuse winding number density denoted $T$ at time $t = 1.8\cdot 10^{-3}$.} \label{fig:windingpen}
\end{center}
\end{figure}

In Figure~\ref{fig:newflow2}, a flow for $\E_\eps$ with the additional term of $C_\eps$ on the other hand can be seen to stably flow past those singular situations.

\begin{figure}[ht!]
\begin{center}
\includegraphics[height =4 cm, clip = true, trim = 110cm 0cm 12cm 0cm]{300w}
\includegraphics[height =4 cm, clip = true, trim = 35cm 0cm 35cm 0cm]{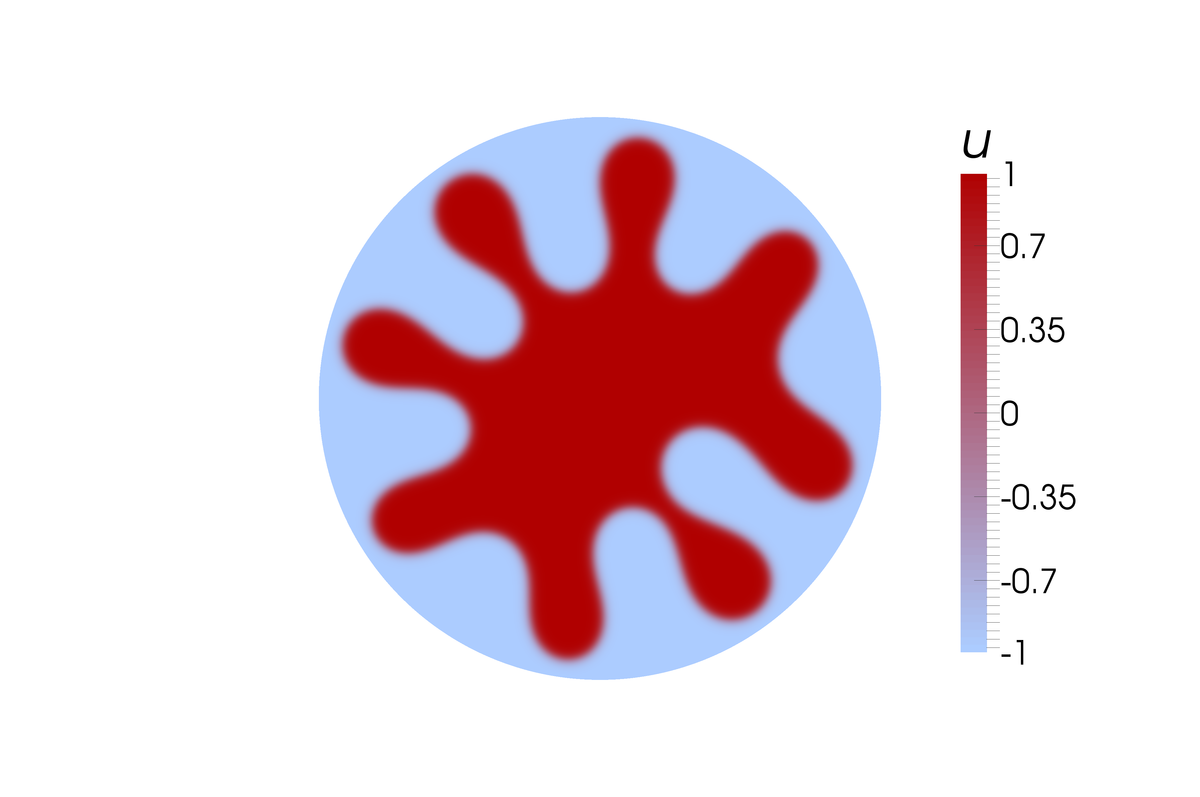}\hspace{0.2cm}
\includegraphics[height =4 cm, clip = true, trim = 35cm 0cm 35cm 0cm]{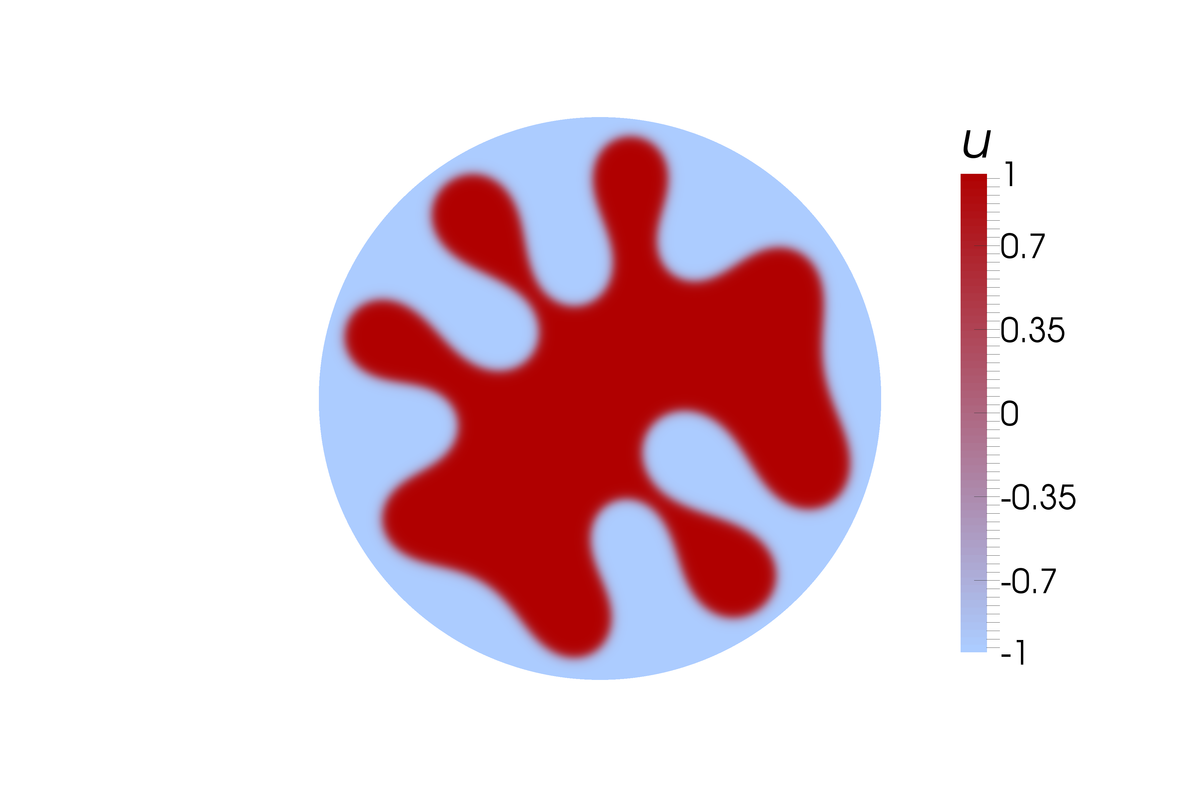}\hspace{0.2cm}
\includegraphics[height =4 cm, clip = true, trim = 35cm 0cm 35cm 0cm]{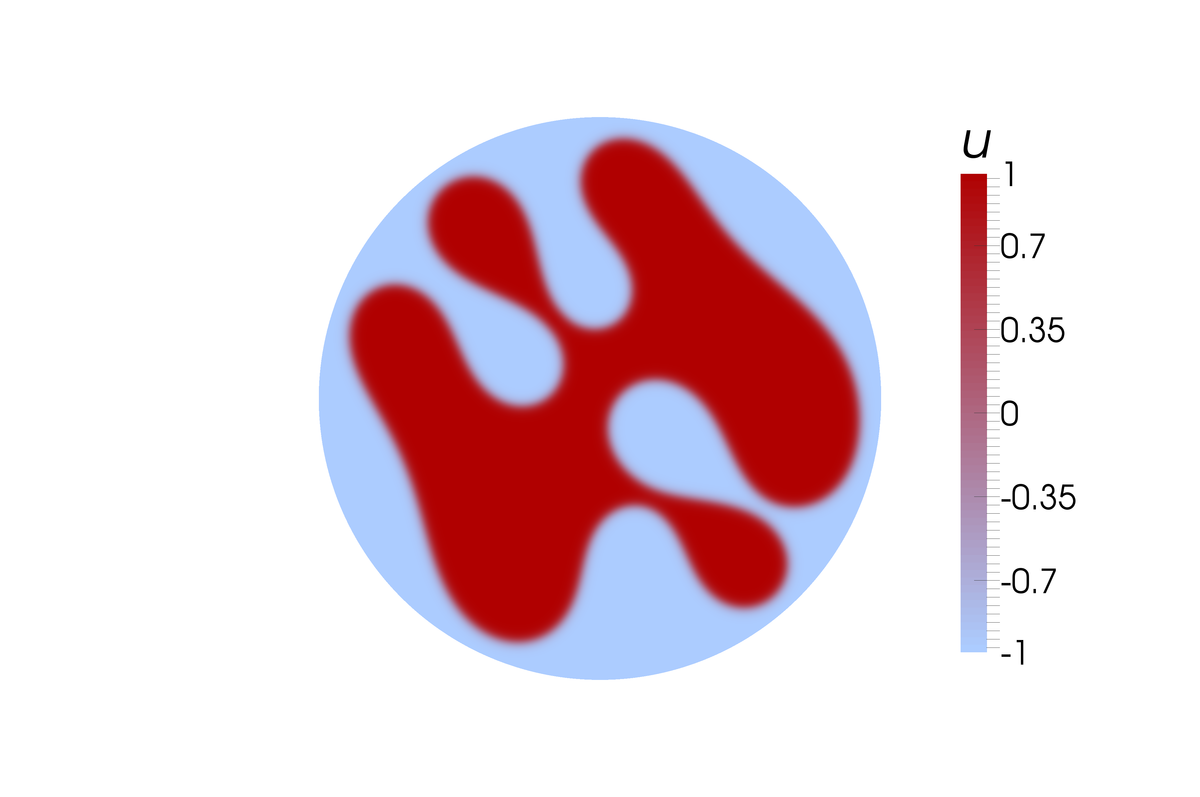}\hspace{0.2cm}
\includegraphics[height =4 cm, clip = true, trim = 35cm 0cm 12cm 0cm]{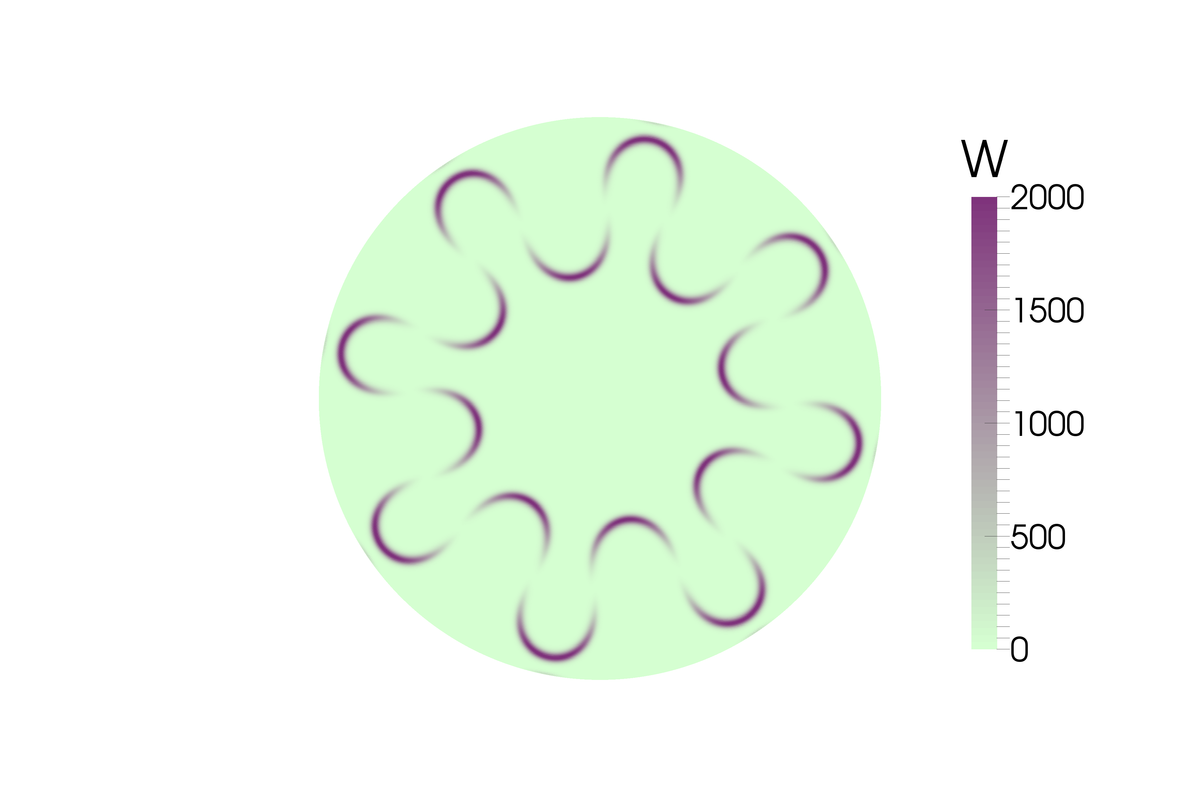}

\includegraphics[height =4 cm, clip = true, trim = 110cm 0cm 12cm 0cm]{300w}
\includegraphics[height =4 cm, clip = true, trim = 35cm 0cm 35cm 0cm]{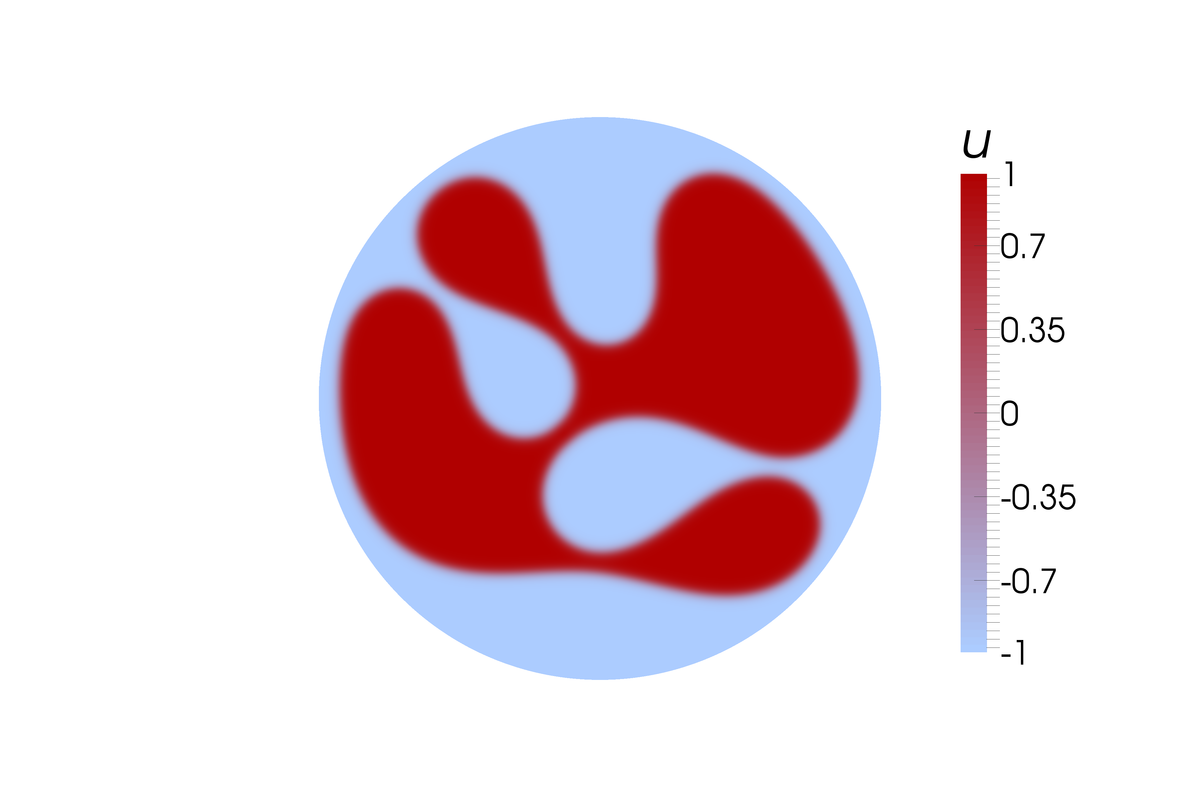}\hspace{0.2cm}
\includegraphics[height =4 cm, clip = true, trim = 35cm 0cm 35cm 0cm]{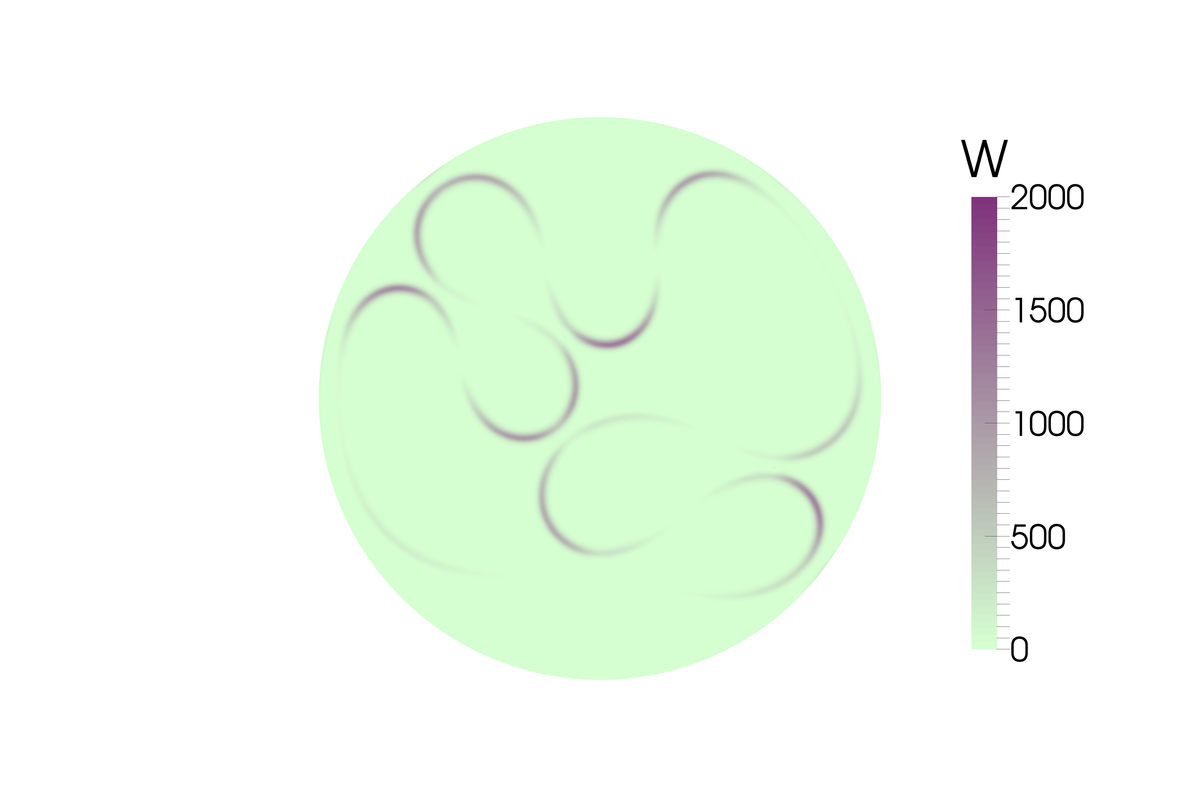}\hspace{0.2cm}
\includegraphics[height =4 cm, clip = true, trim = 35cm 0cm 35cm 0cm]{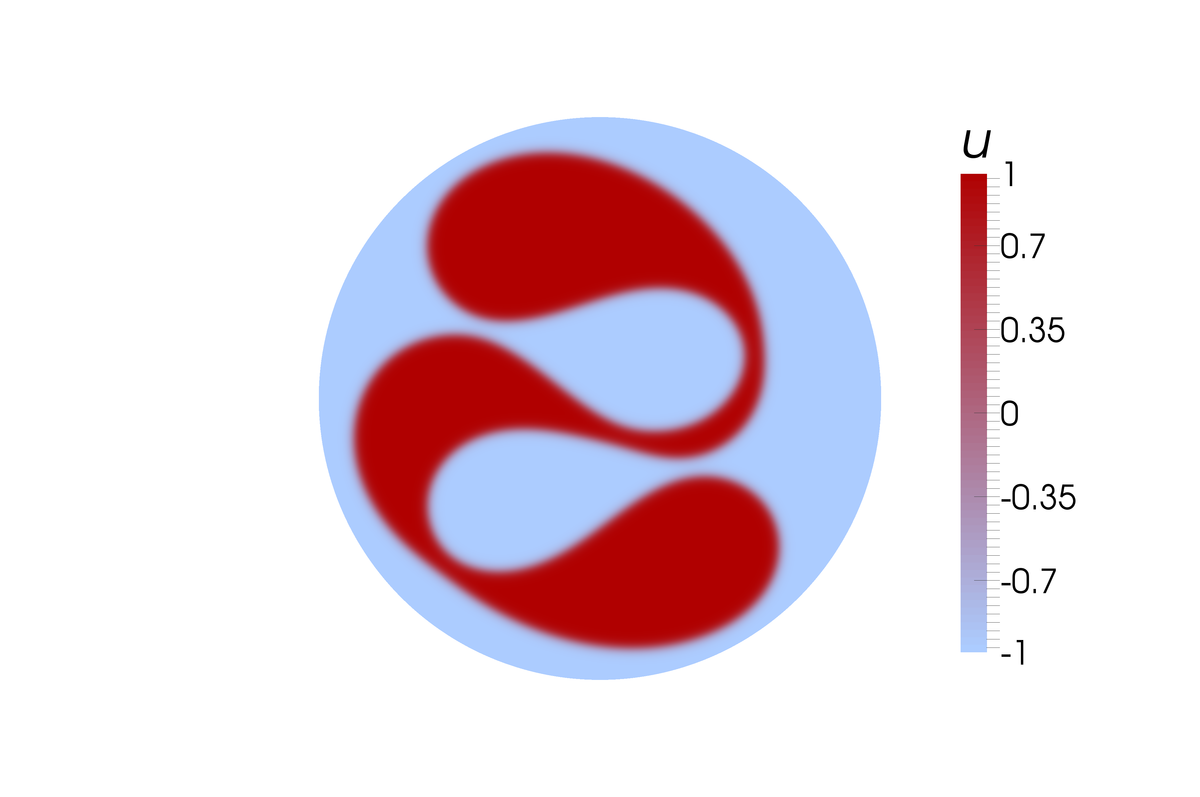}\hspace{0.2cm}
\includegraphics[height =4 cm, clip = true, trim = 35cm 0cm 12cm 0cm]{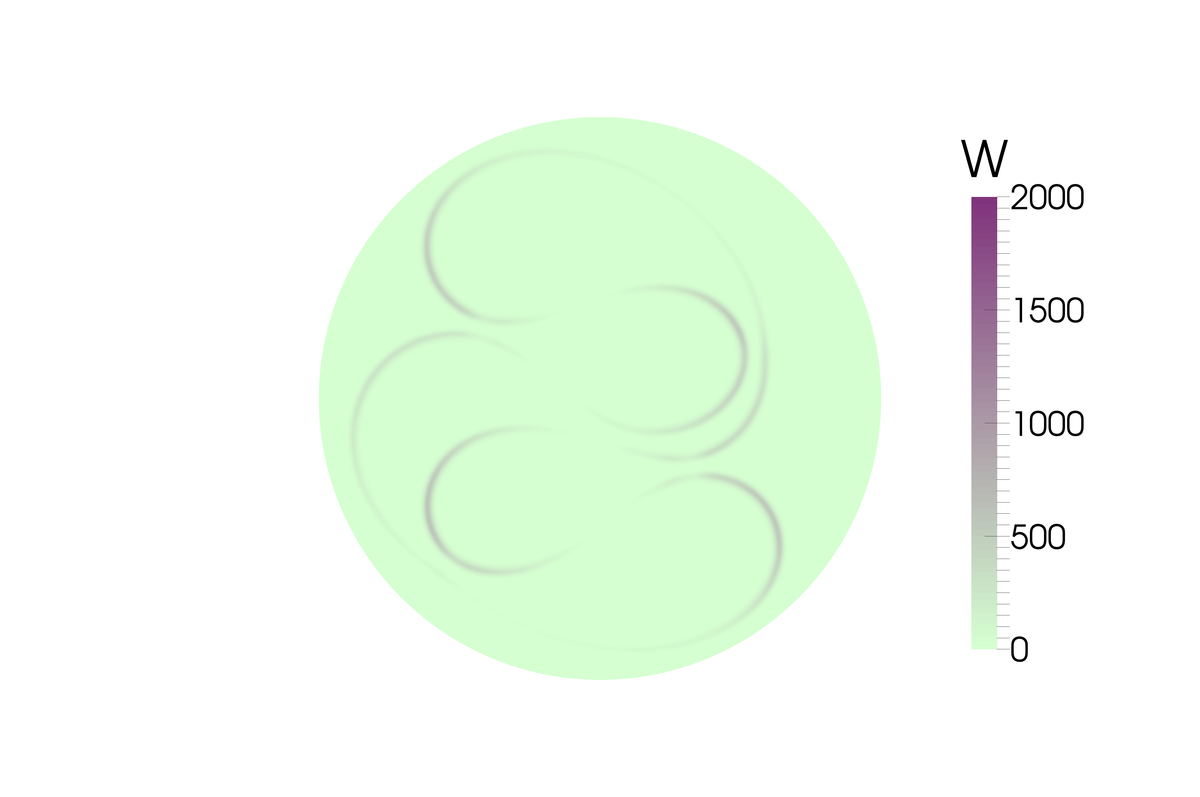}
\caption{\label{fig:newflow2}Evolution including our new topological penalty term $C_\eps$. \stephan{Top line, from} left to right: phase field $u$ for approximately $t=3\cdot 10^{-4}$, $t=7.5\cdot 10^{-4}$ and $t=1.8\cdot 10^{-3}$, then a plot of the diffuse Willmore energy density (denoted $W$ here) of the initial condition.
\stephan{bottom line, left to right}: phase field $u$ and diffuse Willmore energy density first for approximately $t=6.6\cdot 10^{-3}$ and then for approximately $t=3.6\cdot 10^{-2}$.} 

\end{center}
\end{figure}

Comparing the three scenarios above, we observe that there is virtually no difference in the plots at time $3\cdot 10^{-4}$ and that the plots for both modified (penalised using either the old or the new method) functionals at time $7.5\cdot 10^{-4}$ still look very similar. It can thus be argued that the topological condition does not affect the shape of the curve in a major way except when it has to in order to prevent loss of connectedness.

In Figure~\ref{fig:newflow2}, we see non-trivial geometric changes along the gradient flow for later times. This demonstrates the necessity of continuing the flow beyond the critical times.

It should be emphasised that our focus is not on implementing \stephan{a scheme to approximate Willmore flow} using phase fields but on finding minimisers of the diffuse interface problem using a gradient flow. Existence of Willmore flow for long time and topological changes along it are still an open field of research.

\section{Conclusions}\label{section conclusion}

In this paper, we have developed a strategy to enforce connectedness of diffuse interfaces. The strategy fares well in applications and can efficiently be implemented and seems to be more generally applicable to a wider class of problems. We claim that our results can be extended to the following situations.

\begin{itemize}
\item We can include a soft volume constraint like
\[ 
 F\left(  \frac12 \int_\Omega  u_\eps+1\d x \right)
\]
for continuous functions $F\geq 0$.  

\item Another popular constraint compatible with our functional and results is minimising a distance from a given configuration as
\[
A_\eps(u) = \int_\Omega |u - g|\d\lambda
\]
where $\lambda$ is a finite Radon measure on $\Omega$ and $g\in L^1(\Omega)$. This functional originates in problems in image segmentation, but in our context it can be understood as prescribing certain points to lie inside or outside the membrane according to experimental data.
 
\item Using \cite[Theorem 4.1]{Bellettini:2009ui}, we could take Bellettini and Mugnai's approximation of the Helfrich energy 
\[ 
\E_\eps^{\textrm{Hel}}(u) =  \int_\Omega \frac{2+\chi}{2\eps}\, v_{u,\eps}^2 - \frac{\chi}{2\eps}\left|\eps\,\nabla^2u - \frac{W'(u)}\eps \,\nu_u\otimes\nu_u\right|^2\d x   
\]
for $\chi\in(-2,0)$ in place of the diffuse Willmore energy $\W_\eps$. Here $v_{u,\eps}$ is the usual Willmore density associated with $u$ and $\nu_u = \nabla u/|\nabla u|$ is the diffuse normal like in Lemma \ref{proposition varifold convergence}.

\item We can use the same modelling techniques for a finite collection of membranes \stephan{given by $u^1_\eps, \dots, u^N_\eps$} inside an elastic container \stephan{given by $U_\eps$}. \stephan{The governing energy could be composed of a sum of the individual elastic energies $\E_\eps$ and interaction energies $I_\eps$ like
\[
I_\eps (u_\eps^i, u_\eps^j) = \frac1{\eps}\int_\Omega (u_\eps^i+1)^2\,(u_\eps^j+1)^2\dx
\]
which prevent penetration of the phases $u_\eps^i\approx 1$ and $u_\eps^j\approx 1$ or, in a slight variation, enforce confinement of $u_\eps^i\approx 1$ to $U_\eps\approx 1$.}
\end{itemize}

\section*{Acknowledgements}

\noindent PWD acknowledges partial financial support via the ``Wissenschaftler R\"uckkehrprogramm'' of the German Scholars Organization/Carl Zeiss Stiftung. AL has been partially supported by  the ANR research project GEOMETRYA, ANR-12-BS01-0014-01, and  the PGMO research project MACRO ``Mod\`eles d'Approximation Continue de R\'eseaux Optimaux''. SW would like to thank Durham University for financial support through a Durham Doctoral Studentship.

\end{document}